\newtheorem{theorem}{Theorem}
\newtheorem{proposition}[theorem]{Proposition}
\newtheorem{lemma}[theorem]{Lemma}
\theoremstyle{definition}
\newtheorem{definition}[theorem]{Definition}
\newtheorem{example}[theorem]{Example}
\newtheorem{remark}[theorem]{\bf Remark}
\newtheorem{condition}[theorem]{Condition}
\newcommand{\RR}{\mathbb{R}}
\newcommand{\RRp}{\mathbb{R}_{\geq 0}}
\newcommand{\RRpp}{\mathbb{R}_{> 0}}
\newcommand{\CC}{\mathbb{C}}
\newcommand{\NN}{\mathbb{N}}
\newcommand{\ZZ}{\mathbb{Z}}
\newcommand{\QQ}{\mathbb{Q}}
\newcommand{\vf}{\boldsymbol{f}}
\newcommand{\vF}{\boldsymbol{F}}
\newcommand{\vX}{\boldsymbol{X}}
\newcommand{\vz}{\boldsymbol{z}}
\newcommand{\vZ}{\boldsymbol{Z}}
\newcommand{\vy}{\boldsymbol{y}}
\newcommand{\vx}{\boldsymbol{x}}
\newcommand{\vk}{\boldsymbol{k}}
\newcommand{\vc}{\boldsymbol{c}}
\newcommand{\valpha}{\boldsymbol{\alpha}}
\newcommand{\vxbar}{\boldsymbol{\bar{x}}}
\newcommand{\vkbar}{\boldsymbol{\bar{k}}}
\newcommand{\vfhat}{{\boldsymbol{\hat{f}}}}
\newcommand{\vFhat}{{\boldsymbol{\hat{F}}}}
\newcommand{\vfcheck}{{\boldsymbol{\check{f}}}}
\newcommand{\vfbar}{{\boldsymbol{\bar{f}}}}
\newcommand{\vgbar}{{\boldsymbol{\bar{g}}}}
\newcommand{\vFbar}{{\boldsymbol{\bar{F}}}}
\newcommand{\vftilde}{{\boldsymbol{\tilde f}}}
\newcommand{\vFtilde}{{\boldsymbol{\tilde F}}}
\newcommand{\vect}[1]{\boldsymbol{#1}}
\newcommand{\ord}[1]{ {o}( #1 )}
\newcommand{\Ord}[1]{ {O}( #1 )}
\newcommand{\DP}[2]{ \ensuremath{ \frac{\partial #1 }{\partial #2 } } }
\newcommand{\D}[2]{ \ensuremath{ \frac{d #1 }{d #2 } } }
\newcommand{\DD}[2]{ \ensuremath{ \frac{d^2 #1 }{d {#2}^2 } } }
\newcommand{\transpose}[1]{\ensuremath{ #1^\mathsf{T}}}
\newcommand{\cor}[1]{{\color{black} #1}}
\title{Reduction of Chemical Reaction Networks with Approximate Conservation Laws}
\author{Aurélien Desoeuvres$^{1}$, Alexandru Iosif$^7$, Christoph L\"uders$^6$, \\
  Ovidiu Radulescu$^{1,*}$, Hamid Rahkooy$^8$, Matthias Sei\ss$^2$, Thomas Sturm$^{3,4,5}$ }
 \date{
    $^1$ University of Montpellier and CNRS LPHI,  Montpellier, France\\
    $^2$ University of Kassel, Kassel, Germany \\
    $^3$ CNRS, INRIA, and the University of Lorraine, Nancy, France\\
    $^4$ Max Planck Institute for Informatics, Saarbr\"ucken, Germany  \\
    $^5$ Saarland University, Saarbrücken, Germany \\
    $^6$ University of Bonn, Bonn, Germany \\
    $^7$ Rey Juan Carlos University, Madrid, Spain\\
     $^8$ University of Oxford, Oxford, United Kingdom.\\
    $^*$ corresponding author \href{mailto:ovidiu.radulescu@umontpellier.fr}{ovidiu.radulescu@umontpellier.fr} \\
 \bf \today
 }
\begin{document}

\maketitle

\begin{abstract}
Model reduction of fast-slow chemical reaction networks based on the quasi-steady state approximation fails when the fast subsystem has 
first integrals. We call these first integrals approximate conservation laws.
In order to define fast subsystems and identify approximate 
conservation laws, we use ideas from tropical geometry. 
We prove that any approximate conservation 
law evolves slower than all the species involved in it and therefore 
represents a supplementary slow variable in an extended system.
By elimination of some variables of the extended system, we obtain 
networks without approximate conservation laws, which can be 
reduced by standard singular perturbation methods. 
The field of applications of approximate conservation laws covers the 
{\em quasi-equilibrium approximation}, well known in biochemistry.
\cor{We discuss reductions of slow-fast as well as  multiple 
timescale systems. }
Networks with multiple timescales have hierarchical relaxation. 
At a given timescale, our multiple timescale reduction method defines three subsystems composed of
(i) slaved fast variables satisfying algebraic equations, (ii) slow driving variables
satisfying reduced ordinary differential equations, and (iii) quenched much slower variables that are constant. The algebraic equations satisfied by fast variables define chains of nested normally hyberbolic invariant manifolds. In such chains, faster 
manifolds are of higher dimension and contain the slower manifolds.    
Our reduction methods are introduced algorithmically for networks with \cor{monomial reaction rates and} 
linear, monomial or polynomial approximate conservation laws. 
\cor{We propose symbolic algorithms
to reshape and rescale the networks
such that geometric 
singular perturbation theory can be applied to them, 
test the applicability of the theory, and finally 
reduce the networks. }
As a proof of concept, we apply this method to a model 
of \cor{the} TGF-$\beta$ signaling pathway. 

{\bf Keywords:} Model order reduction, chemical reaction networks, singular
perturbations, multiple timescales, tropical geometry.
\end{abstract}

\section{Introduction} 

The study of {\em chemical reaction networks} (CRN) was motivated by important applications in physics and chemistry, concerning models of non-equilibrium thermodynamics \cite{schlogl1972chemical}, 
 catalytic reactions \cite{yablonskii1991kinetic}, combustion \cite{semenov1956some}, etc.
More recently, CRNs were used to model cell and tissue physiology \cite{theret2020integrative} needed
for the understanding of the fundamental mechanisms of living systems and for fighting 
disease. 

Chemical reaction networks can be characterized by reaction {\em stoichiometry} and reaction rates \cite{aris1963independence}. Stoichiometry tells us how many molecules of each species are consumed and
produced in a reaction. For instance, the reaction $A_1 + A_2 \rightarrow A_3$ consumes one molecule of each $A_1$ and $A_2$
and produces one molecule of $A_3$. A useful construct is the stoichiometric matrix $\vect{S}$ in which each column 
represents the net numbers of molecules
of each species produced by a particular reaction. A three species CRN made of the 
reactions  $A_1 + A_2 \rightarrow A_3$ and $A_1 + A_1 \rightarrow A_1$ has the stoichiometric matrix 
$$\vect{S} = \begin{pmatrix} -1 & -1 \\ -1 & 0 \\ 1 & 0 \end{pmatrix}.$$ With each reaction we also associate a 
positive function
of species concentrations, called reaction rate, representing the number of 
occurrences of the reaction
per unit time and volume. In this paper we assume that reaction rates are 
monomials. 
Therefore, the deterministic kinetics of CRNs is described by sets of polynomial 
ordinary differential equations. 
For instance, if the reaction rates in the above example are $R_1=k_1 x_1 x_2$ and $R_2 = k_2 x_1^2$, 
the CRN kinetics is described by $\dot x_1 = - k_1 x_1 x_2 - \frac{1}{2} k_2 x_1^2$, $\dot x_2 =  -k_1 x_1 x_2$ and
$\dot x_3 =  k_1 x_1 x_2 $. 

A lot of effort has been dedicated to studying the behavior of {\em mass-action} networks \cite{Feinberg:19a}. In such networks, the probability that two species react is proportional to their abundances, and therefore
the reaction rates are monomials \cor{in the} concentrations of reactant species with exponents 
equal to the number of molecules entering the reaction. The above example is of this type, but will no
longer be of this type if $R_1$ is changed to \cor{$k_1 x_1 x_2^2$}, for instance. 
This constraint leads to algebraic properties exploited in {\em chemical reaction network theory} (CRNT), which has been initiated by Horn, Jackson and Feinberg \cite{feinberg1974dynamics,Feinberg:19a}. 
In order to cover more general models,  we do not impose the mass action  constraint on the reaction rates.
\cor{Instead, to avoid that some species become negative as a result
of the CRN kinetics, we use a weaker constraint: if a reaction 
consumes a species, then its rate is proportional
 to a strictly positive power of the concentration of this species. }
In spite of significant progress
towards elucidating the properties of CRNs, important models are left aside because of their size and complexity of their dynamics. For such examples, algorithmic model reduction, which 
transforms complex networks into simpler ones that can be more easily analysed,
becomes a necessity. A few attempts of developing model reduction algorithms used concepts of CRNT such as 
complex balance \cite{rao2013graph,feliu2013simplifying}, but they were limited to networks functioning at the steady state or based on ad hoc identification 
of the balanced complexes. Model reduction methods based on the theory of singular perturbations, 
employing concepts such as the intrinsic low dimensional manifold or  
quasi-steady state reduction, are often used in chemistry 
and systems biology \cite{surovtsova2009accessible,gerdtzen2004non}.
\cor{However, these methods 
lack general algorithms for finding appropriate  small parameters and scalings, needed
for the quasi-steady state reduction, and are limited to two time-scales (slow-fast systems).  
}

Recently, we  proposed a symbolic method for algorithmic reduction of chemical reaction networks with multiple \cor{(more than two)} timescales \cite{kruff2021algorithmic}. This method combines
{\em tropical geometry} ideas 
for identifying the time and concentration scales 
 \cite{samal2015geometric,radulescu2020tropical}, 
dominance principles based on comparison of 
orders of magnitude \cite{gr08,radulescu2008robust,gorban2010asymptotology} 
and
singular perturbation results \cite{fenichel1979geometric,cardin2017fenichel} to justify the reduction. 
For a given timescale, the method defines three subsystems: a {\em slaved equilibrated} subsystem, a {\em driving evolving} subsystem and a {\em quenched}
subsystem. 
The variables of the slaved subsystem satisfy quasi-steady equations,
defined as equilibria of the fast truncated ODEs in which all the remaining variables are considered fixed.  
The entire construction requires
 the  hyperbolicity of the quasi-steady state, which is needed in the classical geometrical 
singular perturbation theory of Fenichel  \cite{fenichel1979geometric}.
Although general in its implementation, this reduction method fails in a number of cases. A major cause of failure is the degeneracy of the quasi-steady state, when the fast dynamics  
has a continuous variety of steady states. Typically, this happens when the fast truncated ODEs have first integrals, 
i.e.~quantities that are conserved on any trajectory, \cor{whose values depend} on the initial conditions. The quasi-steady states are no longer hyperbolic, because
the Jacobian matrix of the {\em fast} part of the dynamics is in this case singular. 
This type of singular perturbations, called {\em critical}, is known  since the work of Vasil'eva and Butuzov in the 70's \cite{vasil1980singularly}, but its origins can be 
traced back to the early theory of enzymatic reactions, as quasi-equilibrium is an 
instance of critical singular perturbations. Vasil'eva and Butuzov
\cite{vasil1980singularly} propose asymptotic expansions of the solutions of 
singularly perturbed systems in the critical case  
based on their method of {\em boundary series}, which are two timescale expansions. 
Their method works in the case when the problem has rigorously two timescales, but
can not be applied in the case when there are more than two timescales. 
We show in this paper that critical singular perturbations may have more timescales
than are apparent after rescaling parameters and variables. 
\cor{We also provide algorithmic methods 
to compute these extra timescales that correspond
to approximate conservation laws.}

Exact conservation laws, i.e.~first integrals of the full dynamics, were
already used for model order reduction. If such quantities exist, \cor{the model
can be reduced by eliminating} 
a number of variables and equations equal to the number of
independent conservation laws \cite{lemaire2014defining,mahdi2017conservation}. 
In the present work, we introduce the {\em approximate conservation laws}
that are quantities conserved by the fast dynamics. 
\cor{In models with multiple timescales, approximate conservation laws
provide extra slow variables. Model reduction takes place by elimination 
of the fast variables.}
We thus provide algorithmic reduction methods covering the case of non-hyperbolic fast dynamics with conservation. 
Our algorithms are inspired from the \cor{well-known}
{\em quasi-equilibrium} approximation \cite{gorban2010asymptotology,radulescu2012reduction}. 
Contrary to the quasi-equilibrium approximation that uses only  linear approximate conservation laws, here we are also exploiting non-linear conserved quantities. Similar ideas were developed in \cor{\cite{schneider2000model,del2008modular,auger2008aggregation}},
but without a full algorithmic solution. Like in \cite{kruff2021algorithmic} we use tropical geometry to find appropriate time and concentration scales. 

\cor{The topic of conservation laws has a broad interest and its relation to 
symmetry was widely studied in classical and quantum mechanics (see Noether's theorems \cite{noether1918invarianten}).} Referring to the broad range of phenomena in 
physics, from nuclear forces to gravitation, having {\em near symmetry}, R. Feynman concludes that
``God made the laws only nearly symmetrical so that we should not be jealous of His perfection'' \cite{feynman1963}.
Approximate continuous symmetries (Lie-B\"acklund symmetries) were studied for  differential equations
with 
or without a Lagrangian (see \cite{baikov1988approximate} and  \cite{kara1999approximate}, respectively). Beyond their utility in the theory of 
regular and singular perturbations, 
\cor{exact and approximate} symmetries can be used to gain insight into the dynamics of complex \cor{chemical reaction networks.}
 Approximate conservation laws, valid for certain concentrations and not valid for other concentrations
 of biochemical species, imply that the same biochemical system can have multiple behaviors depending
 on the internal or external stimuli. 
 In biology,  ``imperfect'' conservation allows living systems to be 
 flexible, to evolve and adapt to changes of the environment. 
 This also leads to multiple dynamical phenomena: slow metastable states, 
 bifurcations in the fast dynamics of the system and itineracy when the system 
 switches from one metastable state to another \cite{gr08,auger2008aggregation,rabinovich2008transient}.

 The structure of this paper is as follows. Section 2 introduces the class of 
 models we are dealing with. These are systems of ODEs whose r.h.s.~are 
 integer coefficient polynomials in species concentrations and reaction rate constants.
 Theorem 1 shows that these models are always endowed with a stoichiometric matrix.
 Section 3 introduces the concept of exact and approximate conservation laws, without
 methods for computing them. 
 The methods for symbolic computation of approximate conservation laws are
 presented in  \cite{desoeuvres2022complete}.
 Section 4 introduces our reduction method based on tropically constrained formal scalings and approximate conservation laws. An important result here is that approximate conservation laws
 are always slower than all the species involved in their structure. This implies
 that they can be used as new, unconditionally slow variables, providing 
 robust reductions. In \ref{sec:SM1} we illustrate
 the method via a case study from molecular biology of intracellular signal transduction.

\section{Models}
In this paper we consider CRNs with $n$ species $A_1,\ldots,A_n$,  
whose concentrations $\vx = (x_1,\ldots,x_n)$
follow a system of ODEs of the form
\begin{equation} \label{eq:fi}
\dot x_1 = f_1(\vk,\vx), \, \dots , \, \dot x_n = f_n(\vk,\vx)  ,
\end{equation}
where 
$$f_i(\vk,\vx) = \sum_{j=1}^r S_{ij} k_j   
\vx^{\vect{\alpha}_j} \in \ZZ[\vk,\vx]=\ZZ[k_1,\dots,k_r,x_1,\dots,x_n].$$ 
The monomials $\vx^{\vect{\alpha}_j}= x_1^{\alpha_{j1}}\ldots x_n^{\alpha_{j_n}}$ appearing in the 
right hand sides of \eqref{eq:fi} are defined 
by $r$ multi-indices $\vect{\alpha}_j=(\alpha_{j1},\ldots,\alpha_{jn}) \in \NN^n$ and  
for each monomial the variable $k_j$ represents a rate constant. The variables $\vk=(k_1,\dots,k_r)$ take values in $\RRpp^r$ and 
the integer coefficients $S_{ij}$ form a matrix $\vect{S}=(S_{ij}) \in \ZZ^{n\times r}$, which is called
the stoichiometric matrix. 
The concentration variables $\vx=(x_1,\ldots,x_n)$  take values in $\RRpp^n$. For reasons that will become clear in  Section~\ref{sec:trop} in Remark~\ref{rem:rrpp}, we exclude zero concentrations. 
We denote the vector of right hand sides of  \eqref{eq:fi} by 
$$\vect{F}(\vk,\vx) = \transpose{(f_1(\vk,\vx),f_2(\vk,\vx),\ldots,
f_n(\vk,\vx))}.$$ 



Mass action networks belong to this class of models. 
For a mass action reaction 
$$\bar{\alpha}_{j1} A_1 +  \ldots + \bar{\alpha}_{jn} A_n \rightarrow 
\beta_{j1} A_1 +  \ldots + \beta_{jn} A_n$$ 
we have that $S_{ij} = \beta_{ji} - \bar{\alpha}_{ji}$ and that 
the reaction rate is  $k_j \vx^{\vect{\bar{\alpha}}_j}$, i.e.~the stoichiometric coefficients $\vect{\bar{\alpha}}_{j}$ and the 
multi-indices $\vect{\alpha}_{j}$ coincide. 
But in general this must not be the case, meaning that the multi-indices $\vect{\alpha}_{j}$ and $\vect{\bar{\alpha}}_j$ in $S_{ij} = \beta_{ji} - \bar{\alpha}_{ji}$ and $k_j \vx^{\vect{\alpha}_j}$
 are not necessarily equal. 
\cor{In order to keep the positive orthant $\RRpp^n$
invariant, we consider that whenever a reaction consumes 
a species, its rate tends to zero when the concentration 
of this species approaches zero. This is equivalent to  
\begin{condition}\label{cond:pos}
$\alpha_{ji} > 0$  for all
$i,j$ such that $S_{ij}<0$. 
\end{condition}
The condition~\ref{cond:pos} is automatically
satisfied by mass action reactions, but covers also 
 truncated ODEs systems introduced  in Section~\ref{sec:exdef}
 and used in Section~\ref{sec:model_reduction}.  
}

\section{Approximate Conservation Laws}

\subsection{A Classical Example and some Definitions}
\label{sec:exdef}

\begin{example}\label{MM}
Let us consider the irreversible 
Michaelis-Menten mechanism that is paradigmatic for enzymatic reactions. We choose rate constants corresponding to the so-called quasi-equilibrium, studied by Michaelis and Menten. 
The reaction network for this model is
$$
  S + E   \underset{k_2}{\overset{k_1}{\rightleftharpoons}} ES    \overset{k_3 \delta}{\longrightarrow} E + P,
$$
where $S$ is a substrate, $E$ is an enzyme, $ES$ is an enzyme-substrate complex and $k_1$, $k_2$, $k_3$ 
are rate constants. Here $0 < \delta < 1$ is a small positive scaling parameter, indicating that the third rate constant  is
small. 

According to mass-action kinetics, the concentrations
$x_1=[S]$, $x_2=[ES]$ and $x_3=[E]$ satisfy the system of 
ODEs
\begin{eqnarray}\label{eqsMM}
\dot{x_1} &=& - k_1 x_1 x_3 + k_2 x_2 , \notag \\
\dot{x_2} &=& k_1 x_1 x_3 - k_2 x_2 - \delta k_3 x_2 , \notag \\
\dot{x_3} &=& -k_1 x_1 x_3 + k_2 x_2 + \delta k_3 x_2.
\end{eqnarray}
We consider the truncated system of ODEs
\begin{eqnarray}\label{eqsMMtruncated}
\dot{x_1} &=& - k_1 x_1 x_3 + k_2 x_2, \notag \\
\dot{x_2} &=& k_1 x_1 x_3 - k_2 x_2 , \notag \\
\dot{x_3} &=& -k_1 x_1 x_3 + k_2 x_2 ,
\end{eqnarray}
 which is obtained by setting $\delta = 0$ in \eqref{eqsMM}. 
The truncated system \eqref{eqsMMtruncated} describes the
dynamics of the model on fast timescales of order $\Ord{\delta^0}$. 

The steady state of the fast dynamics is obtained by equating to zero the r.h.s.~of \eqref{eqsMMtruncated}. The resulting
condition is called  {\bf quasi-equilibrium (QE)}
because it means that the complex
formation rate $k_1 x_1 x_3$ is equal
to the complex dissociation rate $k_2 x_2$.
In other words, the reversible reaction 
$$ S + E \underset{k_2}{\overset{k_1}{\rightleftharpoons}} ES$$ functions at 
equilibrium. The QE condition is reached only at the end of the
fast dynamics and is satisfied
with a precision of order $\Ord{\delta}$ during
the slow dynamics \cite{gorban2010asymptotology}. Because of its approximate validity and purely
kinetic origin, QE is different from the similar concept of detailed balance \cite{Boltzmann}.

We introduce the linear combinations of variables $x_4 = x_1 + x_2$ and $x_5 = x_2 + x_3$ corresponding 
to the total substrate and total enzyme concentrations, respectively.
Addition of the last two equations of \eqref{eqsMM} leads to 
$\dot{x}_5=0$, which means that for solutions of the full system $x_5$ is constant for all times.
We will call such a quantity an {\bf exact conservation law}.  

Addition of the first two and the last two equations of \eqref{eqsMMtruncated} lead to $\dot{x}_4=0$ and
$\dot{x}_5=0$. This means that $x_4$ is constant for 
solutions of the truncated dynamics, valid at short times $t = \Ord{\delta^{0}}$ and is not constant at larger times $t = \Ord{\delta^{-1}}$.
We call such a quantity an  {\bf approximate conservation law}. 
The quantity $x_5$ is both an exact and approximate conservation law. 
\end{example}

More generally, let us consider system \eqref{eq:fi}.
This model depends on the parameters 
$\vk=(k_1, \dots, k_r)$.
After rescaling it by powers of 
a scaling parameter $\delta$ with $0 < \delta < 1$,  it becomes (for details see Section~\ref{sec:formal_scaling})
\begin{eqnarray} \label{fullOriginalSystem}
 \dot{\bar x}_1 & =&  \delta^{b_1} \big(\bar f_1^{(1)}(\vkbar,\vxbar) + \delta^{b'_1} \bar f_1^{(2)}(\vkbar,\vxbar,\delta) \big) , \notag \\
 & \vdots & \notag \\
 \dot{\bar x}_n & = & \delta^{b_n} \big( \bar f_n^{(1)}(\vkbar,\vxbar) + \delta^{b'_n} \bar f_n^{(2)}(\vkbar,\vxbar,\delta)\big),
\end{eqnarray}
where $ b'_i > 0$. The rescaled model \eqref{fullOriginalSystem}
is rewritten in rescaled variables
$\vxbar=(\bar x_1,\dots,\bar x_n)$ and
depends on the rescaled parameters $\vkbar=(\bar k_1,\dots,\bar k_r)$.

The functions $\bar f_i^{(2)}(\vkbar,\vxbar,\delta)$ are infinitely differentiable and their first derivatives in $\delta$ vanish at $\delta=0$. 
We also have
$$f_i(\vk,\vx) = 
\delta^{b_i} \big(\bar f_i^{(1)}(\vkbar,\vxbar) + \delta^{b'_i} \bar f_i^{(2)}(\vkbar,\vxbar,\delta) \big).$$
For small $\delta$ and $b_i'>0$ the terms in $\delta^{b'_i} \bar f_i^{(2)}(\vkbar,\vxbar,\delta)$ are dominated by the terms in $\bar f_i^{(1)}(\vkbar,\vxbar)$. This justifies
to introduce  
the {\bf truncated system} as the system of ODEs obtained
by keeping only the lowest order dominant terms in \eqref{fullOriginalSystem}, namely
\begin{equation} \label{TruncatedSystem}
 \dot{x}_1 =  f_1^{(1)}(\vk,\vx), \, \ldots , \,
 \dot{x}_n = f_n^{(1)}(\vk,\vx),
\end{equation}
where
$f_i^{(1)}(\vk,\vx) = \delta^{b_i} 
\bar f_i^{(1)}(\vkbar,\vxbar)$ for all $1\leq i \leq n$.

\begin{definition} 
\label{def:exactandapproxcons}$\,$
\begin{enumerate}[label=(\alph*)]
    \item \label{def:exactandapproxconsa} A function $\phi(\vx)$ is an {\bf  exact conservation law}, unconditionally 
    on the parameters, if it is a first integral of the full system \eqref{eq:fi}, i.e.~if 
$$\sum_{i=1}^n \DP{\phi}{x_i}(\vx) f_i(\vk,\vx) = 0$$ for all $\vk \in \RRpp^r$,
$\vx \in \RRpp^n$. 
\item \label{def:exactandapproxconsb} A function $\phi(\vx)$ is an {\bf approximate  conservation law}, unconditionally  on the parameters, if it is a first integral of the truncated system \eqref{TruncatedSystem}, i.e.~if 
$$ \sum_{i=1}^n \DP{\phi}{x_i}(\vx)  f_i^{(1)}(\vk,\vx) = 0 $$
for all $\vk \in \RRpp^r$,
$\vx \in \RRpp^n$.
\item An exact \textnormal{(}approximate\textnormal{)} conservation law of the form $c_1 x_1 + \dots + c_n x_n $ with coefficients $c_i \in \RR$
is called an {\bf exact \textnormal{(}approximate\textnormal{)} linear conservation law}. If $c_i \geq 0$ for 
$1\leq i \leq n$, the linear conservation law is
called {\bf semi-positive}.
\item An exact \textnormal{(}approximate\textnormal{)} conservation law of the form $x_1^{m_1} \cdots x_n^{m_n} $ with $m_i \in \ZZ$ is called an {\bf exact \textnormal{(}approximate\textnormal{)} rational monomial conservation law}. 
 If $m_i \in \ZZ_+$ for 
$1\leq i \leq n$, the conservation law is
called {\bf monomial}. For simplicity, in this
paper, we will call both types {\bf monomial}.
\item An exact \textnormal{(}approximate\textnormal{)} conservation law of the form $\sum_{i=1}^s a_i x_1^{m_{1i}} \cdots x_n^{m_{ni}}$ with $m_{ji} \in \ZZ_+$
and $a_i \in \RR$ is called an {\bf exact \textnormal{(}approximate\textnormal{)} polynomial conservation law}. 
\end{enumerate}
\end{definition}
\begin{remark}
In the above definitions $\phi(\vx)$ does not contain $\vk$, and exists for all admissible $\vk$,
which is a slight limitation. This condition facilitates the calculation of time scale orders needed for model
reduction (see Section~\ref{sec:acslow}). 
Of course, some conservation laws may depend on $\vk$, and/or exist only for special values of $\vk$, but finding
them and their existence conditions is generally a much 
more difficult problem. Parametric conservation laws are considered in our parallel paper \cite{desoeuvres2022complete}. 
From now on, for the sake of simplicity and when it is clear from the context, we will refer to the  categories in \eqref{def:exactandapproxconsa} and \eqref{def:exactandapproxconsb} of Definition~\ref{def:exactandapproxcons} as conservation law and approximate conservation law, respectively.  
Let us note that an approximate conservation law can also be exact. 
\end{remark}






Some variables may not appear in a conservation law $\phi(\vx)$. This means that in the case of a linear conservation law some coefficients $c_i$ may be zero, or in the case of a nonlinear conservation law some partial derivatives $\DP{\phi}{x_i}(\vx)$ may vanish. If $r$ is the number of all non-zero quantities $\DP{\phi}{x_i}(\vx)$, then we say that the conservation law {\bf depends on $r$ variables}.

\begin{definition}
\label{def:irreducibleConsLaw}
An exact or approximate conservation law depending on 
$r$ variables 
is called  \cor{{\bf simple} 
if it can not} be split 
into the sum or the product of two conservation laws
such that at least one of them depends on 
a number  \cor{$r'$ of variables, with $1 \leq r' < r$}.
\end{definition} 

\begin{definition}
\label{def:degenerateSteadyStates}
For $\vk \in \RRpp^r$  a steady state $\vx$ is a positive solution of $\vect{F}(\vk,\vx)=0$
and we denote 
the {\bf steady state variety} by $\mathcal{S}_{\vk}$. A steady state $\vx$ is 
{\bf degenerate} or {\bf non-degenerate}
if the Jacobian $D_{\vx} \vect{F}(\vk,\vx)$ is singular or regular, respectively. 
\end{definition}

Degeneracy of steady states implies that
$\mathcal{S}_{\vk}$ is not discrete. Reciprocally, 
  if the local dimension at a point $\vx_0 \in \mathcal{S}_{\vk}$ is strictly positive, then $\vx_0$ is degenerate (Theorem~10 of \cite{desoeuvres2022complete}).



\begin{definition}
\label{def:completeConsLaw}
A set $$\vect{\Phi}(\vx)=
\transpose{\left( \phi_1(\vx),\ldots,\phi_s(\vx) \right)}$$ of exact
conservation laws is called {\bf complete} if
the Jacobian matrix 
$$\vect{J}_{\vect{F},\vect{\Phi}}(\vk,\vx)=D_{\vx}\transpose{(\vect{F}(\vk,\vx), \vect{\Phi}(\vx) )}$$ 
has rank $n$ for any $\vk \in \RRpp^r$, $\vx\in \RRpp^n$ 
 satisfying $\vect{F}(\vk,\vx)=0$.
The set is called {\bf independent} if the Jacobian matrix of  
$ \transpose{\vect{\Phi}(\vx)}$ with respect to $\vx$
has rank $s$ for any $\vk \in \RRpp^r$, $\vx\in \RRpp^n$ such that 
$\vect{F}(\vk,\vx)=0$. In the case of a set $\vect{\Phi}(\vx)$ of approximate conservation laws completeness is defined with $\vect{F}(\vk,\vx )$
replaced by 
$$\vect{F}^{(1)}(\vk,\vx)=\transpose{\left( f_1^{(1)}(\vk ,\vx), \ldots,
  f_n^{(1)}(\vk ,\vx )\right)}.$$
\end{definition}

If a CRN has a complete set of conservation laws
$\vect{\Phi}(\vx)$, then the set of positive
solutions of $\vect{F}(\vk,\vx)=0$, $\vect{\Phi}(\vx)=\vect{c}_0$ is finite (see Proposition~12 of \cite{desoeuvres2022complete}).


\begin{remark}
If the components of $\vect{\Phi}(\vx)$
are linear in $\vx$ and comes from a stoichiometric matrix,
the set $$ \{ \vect{\Phi}(\vx)=\vect{c}_0 \} \cap \RRp^n$$ is called {\bf stoichiometric compatibility class} or {\bf reaction simplex} \cite{wei1962structure,feinberg1974dynamics}. Stoichiometric compatibility classes of systems with complete sets of linear conservation laws contain 
a finite number of steady states (see Proposition~12 of \cite{desoeuvres2022complete}). 
We note that some authors call completeness of linear conservation laws  {\bf non-degeneracy} \cite{feliu2012preclusion}. 
\end{remark}

\begin{remark}
Since our concern is the number of strictly
positive solutions in $\vx$ of the system $\vect{F}(\vk,\vx)=0$, $\vect{\Phi}(\vx)=\vect{c}_0$, in Definition~\ref{def:completeConsLaw} it would be
more natural to consider the rank of  
$\vect{J}_{\vect{F},\vect{\Phi}}(\vk,\vx)$
on $\mathcal{S}_{\vk} \cap \{\vect{\Phi}(\vx)=\vect{c}_0 \} \cap \RRpp^n$.
In fact, as this rank does not depend on $\vect{c}_0$, it is simpler and equivalent to impose its value
on $\mathcal{S}_{\vk} \cap \RRpp^n$.
\end{remark}

\begin{remark}
The  independent linear conservation laws 
$$\vect{\Phi}(\vx) =(x_1+x_2,\, x_2+x_3) $$  of
Example~\ref{MM} are complete. More precisely, the Jacobian of $\transpose{(\vect{F}(\vx),\vect{\Phi}(\vx))}$, where $\vect{F}(\vx)$ is the vector of right hand sides of \eqref{eqsMMtruncated},
has the $3 \times 3$ minor 
$$M:=\mathrm{det}(D_{\vx} \transpose{(-k_1x_1x_3+k_2x_2,\vect{\Phi}(\vx))}) = - k_2 - k_1 x_1 - k_1 x_3.$$ 
This minor can not be zero for positive $\vx$, $\vk$ on the steady state variety defined by the equation
$-k_1 x_1 x_3 + k_2 x_2 =0$ and therefore the rank 
of $\vect{J}_{\vect{F},\vect{\Phi}}(\vk,\vx) $ is three. 

Furthermore, all stoichiometric compatibility classes
defined by $x_1+x_2=c_{01}$, $x_2+x_3=c_{02}$, $\vx > 0$
contain a unique steady state 
\begin{eqnarray*}
x_1 &=&  \big(k_1(c_{01} - c_{02}) - k_2 + \sqrt{\Delta} \big)/(2 k_1), \\
x_2 &=&  \big(k_1(c_{01} + c_{02}) + k_2 - \sqrt{\Delta}\big)/(2 k_1), \\
x_3 &=&  \big(k_1(c_{02} - c_{01}) - k_2 + \sqrt{\Delta}\big)/(2 k_1),
\end{eqnarray*}
where
$\Delta = (c_{01}-c_{02})^2k_1^2+k_2^2 + 2 k_1k_2(c_{01}+c_{02})$.
\end{remark}

\begin{remark}
The following example shows that linear conservation laws are not always complete.
\end{remark}
\begin{example}\label{example2}
One checks easily that the system 
$$\dot x_1 = 1 - x_1 - x_2, \quad \dot x_2 = x_1 + x_2 -1$$
has the linear conservation law $\Phi(\vx)= (x_1 + x_2)$ and that 
the Jacobian of $\transpose{(\vect{F}(\vx),\vect{\Phi}(\vx) )}$ is constant and  
has rank $1$. Trivially the Jacobian has everywhere rank $1$ and so $\vect{\Phi}(\vx)$ is not complete. 
The explicit solutions of the ODE system 
are $x_1(t) = (1-c_0)t + c_1$ and $ x_2(t)=(c_0-1)t + c_0 - c_1$.
One can easily show that all invariant curves are of the 
form $x_1+x_2=c_0$. We conclude that there are no further first integrals and so the system has no 
complete set of conservation laws. 
We can also note that the intersection of the 
steady state variety $x_1 + x_2 = 1$ with a stoichiometric
compatibility class $x_1+x_2 = c_0$
is either empty or continuous.

\end{example}

\begin{remark}
The notions of completeness and independence in
Definition~\ref{def:completeConsLaw} are effective
and can be tested algorithmically 
using parametric rank computations (see Algorithms~3-5
in \cite{desoeuvres2022complete}).
\end{remark}

\section{Model Reduction Using Approximate Conservation Laws}
\label{sec:model_reduction}
\subsection{Formal Scaling Procedure}\label{sec:formal_scaling}

We consider CRN models described by a system of ODEs as in \eqref{eq:fi}, 
 where we assume that numerical values of the model parameters
are given. We denote these parameters by $\vk^*$.

For the scaling of the model parameters $\vk^*$ we choose a new parameter $\epsilon_*\in]0,1[$ and rescale $\vk^*$
by powers of $\epsilon_*$, that is
\begin{equation} 
k_i^*=\bar k_i \epsilon_*^{e_i}  \quad \mathrm{for} \ 1 \leq i \leq r,
\end{equation}
where the exponents $e_i $ are in $\QQ$. Furthermore, the prefactors $\bar{k}_i$ have 
order $\Ord{\epsilon_*^0}$. More precisely, we have  
\begin{equation} \label{eq:prefactor}
(\epsilon_*)^{\eta} < \bar{k}_i \leq (\epsilon_*)^{-\eta} \quad \mathrm{for} \ 1 \leq i \leq r,
\end{equation}
where $\eta$ is a positive parameter smaller than one. 
A possible choice of the exponents is
\begin{equation} \label{rounding}
e_i = \frac{\text{round} (g\log_{\epsilon_*}(k_i^*))}{g} \in \QQ,  
\end{equation}
where $g$ is a strictly positive integer controlling the precision of the 
rounding step and {\it round} stands for round half down. This choice leads to prefactors $\bar{k}_i$ satisfying \eqref{eq:prefactor} with $\eta=1/(2g)$. 

We further rescale the variables $x_k = y_k \epsilon_*^{d_k}$, where $d_k \in \QQ$, and transform \eqref{eq:fi} into the rescaled system
\begin{equation} \label{rescaled}
S_{\epsilon_*}
\left\{
\begin{array}{rcl}
\dot y_1 &=& \displaystyle \sum_{j=1}^r  \epsilon_*^{\psi_{1j}} S_{1j} \bar k_j^*  \vy^{\alpha_j}, \\
&\vdots& \\
\dot y_n &=&  \displaystyle \sum_{j=1}^r  \epsilon_*^{\psi_{nj}} S_{nj} \bar k_j^*  \vy^{\alpha_j}, 
\end{array}\right.
\end{equation}
where $\psi_{i,j} = e_j +\langle\vect{d},\alpha_j\rangle - d_i$ for $1\leq i \leq n$ and $\vect{d}=(d_1,d_2,\ldots,d_n)$.

The parameter and concentration orders $e_i$ and $d_k$
should be understood as orders of magnitude. For instance, if $\epsilon^*=10^{-1}$, a parameter or concentration of order $d=2$ equals roughly $10^{-2}$. Thus, small orders 
mean large parameters or concentration values. 

At this stage, the variable rescaling is completely arbitrary, but in the next subsection the rescaling
exponents $d_k$ 
will satisfy important constraints.

From now on, we will transform the numerical parameters
$\epsilon_*$, $\vk^*$ into variables $\epsilon$, $\vk=(k_1,\ldots,k_r)$ such that
$k_i = \epsilon^{e_i} \bar k_i$
and consider the family of ODE systems indexed by $\epsilon$
\begin{equation} \label{family}
S_{\epsilon}
\left\{
\begin{array}{rcl}
\dot y_1 &=& \displaystyle \sum_{j=1}^r  \epsilon^{\psi_{1j}} S_{1j} \bar k_j  \vy^{\alpha_j}, \\
&\vdots& \\
\dot y_n &=& \displaystyle \sum_{j=1}^r  \epsilon^{\psi_{nj}} S_{nj} \bar k_j  \vy^{\alpha_j}. 
\end{array}\right.
\end{equation}
Our initial model $S_{\epsilon_*}$ is a member of this family, obtained from \eqref{family} for $\epsilon=\epsilon_*$ and $\vk = \vk^*$. We are interested in characterizing the behaviour of the solutions of this family of ODEs 
when $\epsilon \to 0$. This limit will correspond 
to a reduced model 
that is a good approximation to the initial model if $\epsilon^*$ is close to zero for $\vk = \vk^*$
and if several conditions ensuring the convergence of the 
solutions of $S_\epsilon$ are satisfied (see Section~\ref{sec:reduction}). 
This remark suggests that $\epsilon^*$ should be chosen
as small as possible, smaller than an upper bound depending on the convergence
rate of the solutions of $S_\epsilon$ 
in the limit $\epsilon \to 0$. However, 
rounding effects in \eqref{rounding} imply that
when $\epsilon^*$ is too small, the parameter 
orders $e_i$ become zero 
for all $1 \leq i \leq r $.
In practice, we need to differentiate 
parameters whose ratios are large (or small) enough,
although it is generally difficult to establish
what is the meaning of enough. However, this means that
$\epsilon^*$ must be chosen larger than a 
lower bound depending on the common denominator $d$
and on the minimum (or maximum) ratio of parameters
that need to be consider as distinct.


In order to transform the exponents
$\psi_{ij}$ into positive integer orders
and thus
render the equations suitable for application of singular perturbation theory,  
we further perform the time scaling $\tau = \epsilon^{\mu} t$, where
$$\mu = \min \{\psi_{ij} \mid 1\leq i \leq n,\, 1\leq  j \leq r, \, S_{ij} \neq 0 \} $$ and separate lowest order terms in each equation. 
Letting 
$$a_{ij} = \psi_{ij} - \mu \geq 0, \ a_i = \min \{ a_{ij} \mid 1\leq j \leq r, \, S_{ij} \neq 0 \} \geq 0 \ \mathrm{and} \ a'_{ij}=a_{ij}-a_i > 0,$$ 
we obtain  
\begin{equation}  \label{timerescaled}
 y'_i =  \epsilon^{a_i} (\sum_{a_{ij} =a_i}  
 S_{ij} \bar k_j  \vy^{\alpha_j} + 
 \sum_{a_{ij} \neq a_i}  S_{ij} \bar k_j  \epsilon^{a'_{ij}} \vy^{\alpha_j}) \quad \mathrm{for} \ 1 \leq i \leq n.
\end{equation}


Defining $\delta = \epsilon^{1/o}$, where $o \in \NN$ is the least common
multiple of denominators 
of $a_{ij} \in \QQ$, the equations in \eqref{timerescaled} become
\begin{equation}  \label{timerescaleddelta}
 y'_i =  \delta^{b_i} (\bar f^{(1)}_i(\vkbar,\vy) + \delta^{b'_i}  \bar f^{(2)}_i(\vkbar,\vy,\delta)),
\end{equation}
where 
$$\bar f^{(1)}_i(\vkbar,\vy)  =  \sum_{a_{ij} = a_i}  S_{ij} \bar k_j   \vy^{\alpha_j} \quad \mathrm{and} \quad \bar f^{(2)}_i(\vkbar,\vy,\delta)  =  
\sum_{a_{ij} \neq a_i}  S_{ij} \bar k_j  \delta^{b'_{ij}} \vy^{\alpha_j},$$
and all the powers of $\delta$ are positive integers as follows:
$$ 
b_i=o a_i , \
b'_i = o \min \{ a_{ij} \mid 1\leq j \leq r, \, S_{ij} \neq 0, \, a_{ij} \neq a_i \}- b_i >0, \
b'_{ij} =o a'_{ij} - b_i - b'_i >0 .
$$ 


We call the system obtained by retaining only the minimal order dominant terms in
\eqref{timerescaleddelta} the {\bf truncated system} 
\begin{equation}  \label{truncated}
 y'_i =  \delta^{b_i} \bar f^{(1)}_i(\vkbar,\vy) \quad \mathrm{for} \ 1 \leq i \leq n.
\end{equation}
Let us define the  {\bf truncated stoichiometric matrix} $\vect{S}^{(1)}$ as the matrix whose 
entries are
\begin{equation}\label{S1}
    S_{ij}^{(1)} = 
    \left\{ 
    \begin{array}{ll}
     S_{ij}    &  \text{if }  a_{ij} = a_i, \\
      0   &  \text{if not}.
    \end{array}
    \right.
\end{equation}
It then follows that 
$$\bar f^{(1)}_i(\vkbar,\vy)  =  \sum_{j=1}^r  S_{ij}^{(1)} \bar k_j   \vy^{\alpha_j} \quad \mathrm{for} \ 1 \leq i \leq n.$$
For several calculations it is convenient to return to the variables $\vx$ and the time $t$. 
In the variables $x_i = y_i \delta^{o d_i}$,  the full system reads
\begin{equation}  \label{fullx}
\dot x_i =    f^{(1)}_i(\vk,\vx)+ f^{(2)}_i(\vk,\vx) \quad \mathrm{for} \ 1 \leq i \leq n,
\end{equation}
where $$f^{(1)}_i(\vk,\vx)= \delta^{od_i + b_i + o \mu} \bar f^{(1)}_i(\vkbar,\vy) = \sum_{j=1}^r  S_{ij}^{(1)}  k_j   \vx^{\alpha_j}$$
and $$f^{(2)}_i(\vk,\vx)=  \sum_{j=1}^r  S_{ij}^{(2)}  k_j   \vx^{\alpha_j}$$
with $S_{ij}^{(2)}=S_{ij}-S_{ij}^{(1)}$.
In the same variables, the truncated system is
\begin{equation}  \label{truncatedx}
\dot x_i =    f^{(1)}_i(\vk,\vx) \quad \mathrm{for} \ 1 \leq i \leq n.
\end{equation}
\begin{remark}\label{rem:f1eqbarf1}
It is useful to notice that $f^{(1)}_i$ and $\bar f^{(1)}_i$ are in fact identical as polynomials in
$\RR(\vk,\vx)$ for all
$1\leq i \leq n $. 
\end{remark}
\begin{remark}
The variables $x_i$ (and $y_i$) change significantly on timescales 
given, in the same units as $t$, by the reciprocals of $\dot x_i / x_i = \dot y_i / y_i$. Because
$\dot x_i / x_i$ scales with 
 $\delta^{\mu_i}$, where $\mu_i = b_i + o \mu$, we call
 $\mu_i$
the timescale order of $x_i$ which is also the timescale  of $y_i$. 
Changing time units to the units of $\tau$, $x'_i / x_i = y'_i / y_i = \Ord{\delta^{b_i}}$; in these units
the timescales of the variables $x_i$ and $y_i$ have orders  $b_i$.
\end{remark}

By definition $$\min \{ a_{i} \mid 1\leq i \leq n \} = \min \{\psi_{ij} \mid 1\leq i \leq n,\, 1\leq  j \leq r, \, S_{ij} \neq 0 \} - \mu =0$$ and therefore
$\min \{ b_{i} \mid 1\leq i \leq n \} = 0$ and
up to a relabelling of the variables $y_i$ one can assume that
$b_1=0 \leq b_2 \leq \ldots \leq b_n$. 
As the powers $\delta^{b_i}$ indicate the timescales of the variables $y_i$ (in the same units as $\tau$), the
most rapid variable is $y_1$ and the slowest variable is $y_n$. Of course, several variables can have
the same timescale order, i.e. the same value of $b_i$. Let us regroup
the variables $y_i$ into vectors $\vz_k$. More precisely, $\vz_k =(y_{i_k},y_{i_k+1},\ldots,y_{i_k+n_k-1})$
regroups all variables such that $b_{i_k}=b_{i_k+1}=\ldots=b_{i_k+n_k-1}=b_k$, where $n_1 + n_2 + \ldots + n_m = n$.
We then obtain
\begin{equation}  \label{final}
 \vz'_k =  \delta^{b_k} ( \vfbar^{(1)}_k(\vkbar,\vz) + \delta^{b'_k} \vfbar^{(2)}_k(\vkbar,\vz,\delta)),
\end{equation}
where $b_1=0 < b_2 < \ldots < b_m$, $0< b'_k $,
$\vfbar^{(1)}_k(\vkbar,\vz) \in (\ZZ[\vkbar,\vz])^{n_k}$ and 
$\vfbar^{(2)}_k(\vkbar,\vz,\delta) \in (\ZZ[\vkbar,\vz,\delta])^{n_k}$.

We  regroup variables $x_i$
into vectors $\vx_k = (y_{i_k} \delta^{o d_{i_k}},y_{i_k+1}\delta^{o d_{i_{k+1}}},\ldots,
y_{i_k+n_k-1} \delta^{o d_{i_k+n_k-1}})$ for $1\leq k  \leq m$. Although the variables $x_i$ in the same group
 $\vx_k$ can have different orders, their timescales have the same order $\mu_k$;  the vector 
 $\vx_1$ contains the fastest variables, whereas  $\vx_m$ contains the slowest variables.

\subsection{Tropical Geometry Constraints on the Scaling}
\label{sec:trop}

\subsubsection{General Considerations}

In the previous sections, the orders
$\vect{d}=(d_1, \dots, d_n)$  of the species concentrations 
are chosen \cor{arbitrarily.} However, theories of singular perturbations
and normally hyperbolic invariant manifolds imply that after a fast transient period,
the dynamics of CRNs is confined
to one low dimensional normally hyperbolic
invariant manifold \cite{fenichel1979geometric,radulescu2012reduction}, where it remains for 
a long period, after which it switches to another invariant manifold eventually. Normally hyperbolic invariant manifolds
generalize the notion of hyperbolic fixed points \cite{wiggins1994normally}. Although normally hyperbolic manifolds can have both contracting and expanding directions, here we are only concerned with the fully attractive case. Invariant manifolds with 
expanding directions, such as saddle connections, are important for switching between attractive invariant manifolds, a phenomenon that will not be addressed in this paper. 

Quasi-steady state (QSS) \cor{\cite{bodenstein1913theorie,semenov1956some,segel1989quasi} } and 
quasi-equilibrium (QE) \cite{gorban2001corrections} conditions provide lowest order
approximations to these normally hyperbolic, attractive invariant manifolds.
\cor{Although a QSS manifold may loose normal hyperbolicity at singular points 
\cite{eilertsen2020quasi},
the QSS approximation is valid in the stable region of this manifold.}
For CRNs with rational or polynomial rate functions the QSS and QE conditions read as 
systems of polynomial equations and the lowest order
approximations of invariant manifolds are algebraic 
varieties \cite{rvg}. 

Tropical geometry is the natural framework to study limits of
algebraic varieties depending on one parameter. These limits are based
on the Litvinov--Maslov dequantization of real numbers
leading to degeneration of complex algebraic varieties
into tropical varieties \cite{litvinov2007maslov,viro2001dequantization}.
The name dequantization is inspired by the analogy with Shrödinger's 
dequantization in quantum mechanics where the small parameter is
the Planck's constant $h$ and the limit $h \to 0$ allows to obtain
classical mechanics as a limit of quantum mechanics. 
By dequantization, multivariate polynomials become piecewise-linear functions (min-plus polynomials). Furthermore, null sets of multivariate polynomials become tropical hypersurfaces, defined
as the set of points where the piecewise-linear functions are not smooth, i.e.~where the minimum in the min-plus polynomials 
is obtained for at least  two monomials.

\subsubsection{Orders and Valuations} 
In algebraic geometry, tropical hypersurfaces, prevarieties and varieties establish a modern tool in the theory of Puiseux series \cite{bogart2007computing}.
Lowest orders in Puiseux series are called
valuations \cite{maclagan2009introduction}.

In our problem, let us assume that the 
normally hyperbolic invariant manifold confining the reduced dynamics (defined by the QSS or QE conditions, 
see also Section~\ref{sec:reduction})
can be approximate by the 
Puiseux series solutions 
$\vx(\epsilon)$ of the system 
\begin{equation}\label{eq:np}
f_1(\vx,\epsilon)=0,\, \ldots, \, f_m(\vx,\epsilon)=0,
\end{equation}
where 
$f_i(\vx,\epsilon) = \sum_{i=1}^{n_i} M_{ij}(\vx,\epsilon)$ are polynomials 
and $M_{ij}(\vx,\epsilon)$ are monomials in
$\vx$ and $\epsilon$.

Let us remind that a Puiseux series  is a power series with rational exponents and with a minimum,
eventually negative, exponent. By the multivariate version of the Newton-Puiseux theorem \cite{einsiedler2006non,bogart2007computing,maclagan2009introduction}
the  solutions of \eqref{eq:np} read
$$x_i(\epsilon) = \sum_{k=k_{0i}}^{\infty} c_{ik} \epsilon^{k/n},$$
where $n > 0$ and $k_{0i}$ are integers, and $c_{ik_{0i}}\ne0$. In general  
$x_i(\epsilon) \in \CC$, but here we are interested in real positive solutions, that is when
$x_i(\epsilon) \in \RR_{>0}$.

The valuation of $x_i(\epsilon)$ is the smallest exponent of this Puiseux series, namely
$V(x_i(\epsilon))=k_{0i}/n$.
The valuation can also be defined as the limit 
\begin{equation}
V(x_i(\epsilon)) = \lim_{\epsilon\to 0} \log_{\epsilon}(x_i(\epsilon)).  \label{valuation}
\end{equation}

\begin{remark}\label{rem:rrpp}
Concentration valuations \eqref{valuation} and valuation based scalings can be defined only for strictly positive concentrations. This is 
why we assume that all the concentration variables are in $\RRpp$.
\end{remark}

Valuations satisfy the tropical min-plus algebra:
\begin{eqnarray}\label{minplus}
V(x_1(\epsilon)+x_2(\epsilon)) &=& \min \{ V(x_1(\epsilon)),V(x_2(\epsilon))\}, \notag \\
V(x_1(\epsilon)x_2(\epsilon)) &=& V(x_1(\epsilon))+V(x_2(\epsilon)). 
\end{eqnarray}


According to the rules \eqref{minplus} of the min-plus algebra, valuations of monomials are linear 
functions of valuations of parameters 
$$V(k_j \vx^{\valpha_j} ) = e_j + \langle \vect{d},\valpha_j\rangle$$
and valuations of polynomials are min-plus polynomials, i.e.~piecewise-linear functions
$$V\big(\sum_{j=1}^r S_{ij}k_j \vx^{\valpha_j} \big) = \min\{ e_j + \langle \vect{d}, \valpha_j \rangle \mid 1\le j\leq r,\, S_{ij} \neq 0  \}. $$ 
Here we assume that elements of the stoichiometric matrix $S_{ij}$ have order $\Ord{\epsilon^0}$
and zero valuation.

In this paper, we will use valuations as a tool
for computing orders of magnitude. 

For instance, to each variable one can associate a characteristic time that is the reciprocal of $\D{\log(x_i)}{t} = \frac{\dot x_i}{x_i}$. The  valuation of  $\frac{\dot x_i}{x_i}$, which we call timescale order, reads as
\begin{equation}\label{eq:timescale}
\begin{split}
& V\big(\frac{\dot x_i}{x_i} \big) = V(\dot{x_i}) - d_i = \min\{ e_j + \langle \vect{d},\valpha_j \rangle \mid 1\le j\leq r, \, S_{ij} \neq 0  \} -d_i =\\&= \min\{ \psi_{ij} + \langle \vect{d},\valpha_j \rangle \mid 1\le j\leq r, \, S_{ij} \neq 0  \},
\end{split}
\end{equation}
where  $\psi_{ij}$ is defined by \eqref{rescaled}.


\subsubsection{The Tropical Equilibration Conditions}
For Puiseux series solutions, the valuations represent the orders of magnitude introduced in the previous section. In particular, $V(x_i) = d_i$.

By a theorem of Kapranov \cite{maclagan2009introduction} the valuations are rational points on the tropical hypersurface, defined as the locus of points where the piecewise-linear tropical polynomials 
$$V(f_i(\vx,\epsilon))=\min \{ V (M_{ij}(\vx,\epsilon)) \mid 1\leq j\leq n_i \}$$
are non-differentiable. 
In other words, the tropical hypersuface is the locus of points where the minimal
valuation in $V(f_i(\vx,\epsilon))$ is attained for at least two monomials.

In the case of Puiseux series of 
real positive solutions, which are of interest 
for our problem, in addition to the non-smoothness we need also
a sign condition. In this case, we obtain
the {\bf tropical equilibration condition}: the minimal valuation 
is obtained for at least two monomials of opposite signs
\cite{noel2012tropical,noelgvr,radulescu2012reduction,rvg,samal2016geometric,sgfr}.

Let us note that orders of magnitude satisfy the same properties as valuations.
The smallest order monomials 
are also the largest in absolute valueand therefore the tropical equilibration condition 
 means that each polynomial
equation should contain at least two dominant terms of minimal order and opposite signs. 
These conditions 
were justified  heuristically using the concept of compensation of 
dominant monomials \cite{noel2012tropical,noelgvr,radulescu2012reduction,rvg,samal2016geometric,sgfr}.

 In systems with 
multiple timescales, slow dynamics occurs only when 
for each dominant (i.e.~much larger than
the other) monomial on the right hand side of \eqref{eq:fi},  there is at least
one other monomial of the same order but with opposite sign. 


In singular perturbation theory, only fast variables
satisfy quasi-steady state or quasi-equilibrium equations that reduce the dimension of the dynamics. Slow variables need not to satisfy such constraints. 
Therefore, only the fast
variables and not necessarily the slow variables need to satisfy tropical equilibration conditions. This condition was called partial tropical equilibration in \cite{samal2016geometric,desoeuvres_CMSB2022}.

Therefore, in order to obtain the constraints satisfied
by the orders, we first partition the variables into two disjoint sets: fast equilibrated species variables with indices $F$ and slow non-equilibrated species variables with indices $S$, where  $F \cup S =  \{1,\ldots n\}$ and $F \cap S =\emptyset$. 
This splitting is a priori arbitrary and changes
from one partial equilibration solution to another. 

To summarize, orders of variables have to satisfy several types of constraints:
\begin{description}
\item{\bf 1. Tropical equilibration of fast variables} 

The equilibration condition follows from the properties~\eqref{minplus} of valuations and reads as
\begin{equation}
\label{ttotal}
\min \{ \psi_{ij} \mid 1\leq j \leq r, \, S_{ij} > 0 \} =
\min \{ \psi_{ij} \mid 1\leq j \leq r, \, S_{ij} < 0 \},
\end{equation}
for all $i \in F$.

\item{\bf 2.  Tropical equilibration of exact conservation laws}

Let us assume that the reaction network 
has one or several conservations laws $\phi_i(\vx)$, where $1 \leq i \leq n_c$. These can be
linear, monomial or polynomial conservation laws $$\phi_i(\vx) = \sum_{j=1}^{r_c} z_{ij} \vx^{\vect{\beta_{j}}}(\vx),$$ 
where $z_{ij}$ are integers and $\vect{\beta_{ij}} \in \NN^n$ are multi-indices. 
We consider the case that all $z_{ij}$ are positive and have order $\Ord{\epsilon^0}$.

Then the steady state variety must satisfy the polynomial 
equation $\phi_i(\vx) = \bar c_i \epsilon^{f_i}$. Again, the Kapranov theorem adapted
to real positive solutions leads to the condition
\begin{equation}
\min \{ \langle \vect{d},\vect{\beta_{j}} \rangle \mid 1 \leq j \leq r_c,\,  Z_{ij} \neq 0 \} = f_i,
\label{tcons}
\end{equation}
for all $1 \leq i \leq n_c $.



    
\item{\bf 3. Timescale conditions between fast and slow
    variables}

This condition simply means that the fastest slow variable
is slower than any fast variable 
\begin{equation}\label{ttime}
\min   \{ \psi_{ij} \mid S_{ij} \neq 0, \, i \in S, \, 1 \leq  j  \leq r \}
>
\min   \{ \psi_{ij} \mid S_{ij} \neq 0, \, 1\leq j \leq r \},
\end{equation}
for all $i \in F$.
\end{description}

If $F = \{ 1,\ldots,n\}$, then all variables are equilibrated and no timescale conditions are
required. 
In this case, the set of constraints \eqref{ttotal} and \eqref{tcons} define
{\bf total tropical equilibration solutions}. 
If not all variables are fast, then  
the set of constraints \eqref{ttotal}, \eqref{tcons} and \eqref{ttime} define
{\bf partial tropical equilibration solutions} (see \cite{desoeuvres_CMSB2022}).

\subsubsection{Importance of Concentration Valuations for the Scaling}

In many studies, scalings are applied only to parameters. 
This is because species concentrations are unknown and in this case
it is handy to assume that all the chemical species are present
in similar concentrations.
However, this method has limited validity, as concentrations of different species 
can have different orders of magnitudes.

To illustrate this common mistake, we consider the following example, adapted from \cite{SchneiderWilhelm:00a}.

\begin{example}
Let us consider the mass action CRN
$$\emptyset \xrightarrow{1}  A_2 \xrightarrow{1/\epsilon} A_1 \xrightarrow{1} \emptyset,
\quad A_2 + A_2 \xrightarrow{1/\epsilon} \emptyset.$$ 
The corresponding ODEs are 
$$\dot x_1 = x_2 / \epsilon - x_1, \quad \dot x_2 = - x_2/\epsilon - x_2^2/\epsilon + 1.$$

The model does not have exact conservation laws. 
By imposing total tropical equilibration conditions to the model we get 
$$d_2 - 1 = d_1,\quad \min \{d_2-1,2d_2-1\}=0.$$
These equations have the unique solution $d_1 = 0$, $d_2 =1$, meaning that the valuations of $x_1$ and $x_2$ are different. 
The corresponding scaling is $x_1= y_1 $, $x_2= y_2 \epsilon$ and the rescaled ODEs read
$$\dot y_1 = y_2  - y_1, \quad \dot y_2 = \epsilon^{-1} (- y_2 - \epsilon y_2^2 + 1).$$ 
This scaling shows that  $y_1$ and $y_2$ are slow and fast variables, respectively. The fast truncated
system 
$$\dot y_2 = \epsilon^{-1} (- y_2 + 1)$$ 
has a unique hyperbolic steady state $y_2 = 1$.  

This model is not an example of approximate conservation and standard
singular perturbation theory techniques (quasi-steady state approximation) 
can be applied for its reduction.

Note that \cite{SchneiderWilhelm:00a} used non-scaled concentrations for this example. 
By doing so, the 
truncated system is $\dot x_1 = x_2 / \epsilon$, $\dot x_2 = - x_2/\epsilon - x_2^2/\epsilon$, where 
both variables $x_1$ and $x_2$ are fast. This scaling is different from ours. It leads to the nonlinear approximate conservation law $\phi(x_1,x_2) = x_1 + \log (1 + x_2)$
that was interpreted as a slow variable
 in \cite{SchneiderWilhelm:00a}. 
This scaling, for which the two fast variables $x_1$, $x_2$ are not equilibrated, 
could describe the fast dynamics starting with initial concentrations $x_1(0)$, $x_2(0)$
of the same order, but does not apply to later stages of the dynamics.
\end{example}

\subsection{Reduction of the Michaelis-Menten Model under Quasi-equilibrium Conditions}

The Michaelis-Menten model has been used as a paradigmatic example as it allows to introduce  the
main concepts of model reduction. Both QSS and QE reductions were discussed in \cite{noelgvr,rvg,samal2015geometric}, for a two variable Michaelis-Menten
model obtained from the three variable one by exact reduction, using one exact linear conservation laws. In this subsection
we illustrate a slightly different approach that starts with the three variable model introduced in Example~\ref{MM}.

The scaling used to derive  \eqref{eqsMM} is based on the total tropical equilibration solution
$d_1 = d_2 = d_3 = 0$ and $\delta = \epsilon$.
More general scalings, leading to equivalent results, can be found
in \cite{noel2014tropicalization}.
According to this scaling all three variables $x_1$, $x_2$ and $x_3$ have the same timescale. 
As already shown in the Section~\ref{sec:exdef} the new variables $x_4=x_1+x_2$ and $x_5=x_2 + x_3$ are  
conservation laws (approximate and exact, respectively). 

We use the approximate and exact conservation laws to eliminate two out of the three variables 
$x_1$, $x_2$, $x_3$ and obtain
\begin{eqnarray}
    x_2 &=& x_4 - x_1, \notag \\
    x_3 &=& x_5 - x_4 + x_1.
\end{eqnarray}
The remaining variables satisfy
\begin{eqnarray}\label{eqsMMext}
\dot{x_1} &=& - k_1 x_1 (x_5 - x_4 + x_1) + k_2 (x_4-x_1), \notag \\
\dot{x_4} &=& - \delta k_3 (x_4 - x_1), \notag \\
\dot{x_5} &=& 0.
\end{eqnarray}
System \eqref{eqsMMext} shows that $x_4$ is a slow variable and $x_5$ is a conserved constant variable. 
The constant variable can be turned into a parameter $x_5=k_4$, which leads to
\begin{eqnarray}
\dot{x_1} &=& - k_1 x_1 (k_4 - x_4 + x_1) + k_2 (x_4-x_1), \notag \label{eqsMMF} \\
\dot{x_4} &=& \delta k_3 (x_4 - x_1) . \label{eqsMMS}
\end{eqnarray}

The system \eqref{eqsMMS} is typically a slow-fast system with $x_1$ the fast and $x_4$ the slow variable \cite{tikh,fenichel1979geometric}. 
The fast dynamics is described by 
\begin{eqnarray}\label{eqsMMfast}
\dot{x_1} &=& - k_1 x_1 (k_4 - x_4 + x_1) + k_2 (x_4-x_1)  
\end{eqnarray}
and has two hyperbolic steady states, where only one is positive and 
stable\footnote{all the eigenvalues of the Jacobian matrix computed in this state lie in the complex left half-plane}, namely
\begin{equation} \label{eqsMMsstate}
    x_1^* = \frac{-(k_1(k_4-x_4)+k_2) + \sqrt{(k_1(k_4-x_4)+k_2)^2 + 4k_1k_2x_4}}{2k_1}.
\end{equation}

It follows from singular perturbation theory \cite{tikh,hopp,fenichel1979geometric} that the solutions of system \eqref{eqsMMS} with appropriate initial 
conditions converge for $\delta \to 0$ to the solutions of the differential-algebraic system
\begin{eqnarray}\label{semiexplicit}
0 &=& - k_1 x_1 (k_4 - x_4 + x_1) + k_2 (x_4-x_1), \notag \\
x'_4 &=&  k_3 (x_4 - x_1), 
\end{eqnarray}
where the derivative of $x_4$ is with respect to the time $\tau = t\delta$.

Using the solution \eqref{eqsMMsstate}, the semi-explicit differential-algebraic system 
\eqref{semiexplicit}
can be transformed into the reduced ODE
\begin{equation}
  x'_4 =  k_3 \left( x_4 - \frac{-(k_1(k_4-x_4)+k_2) + \sqrt{(k_1(k_4-x_4)+k_2)^2 + 4k_1k_2x_4}}{2k_1} \right). 
\end{equation}
Figure~\ref{fig:figure1} illustrates the accuracy of this reduction.

\begin{figure}[ht!]
    \centering
    \includegraphics[width=\linewidth]{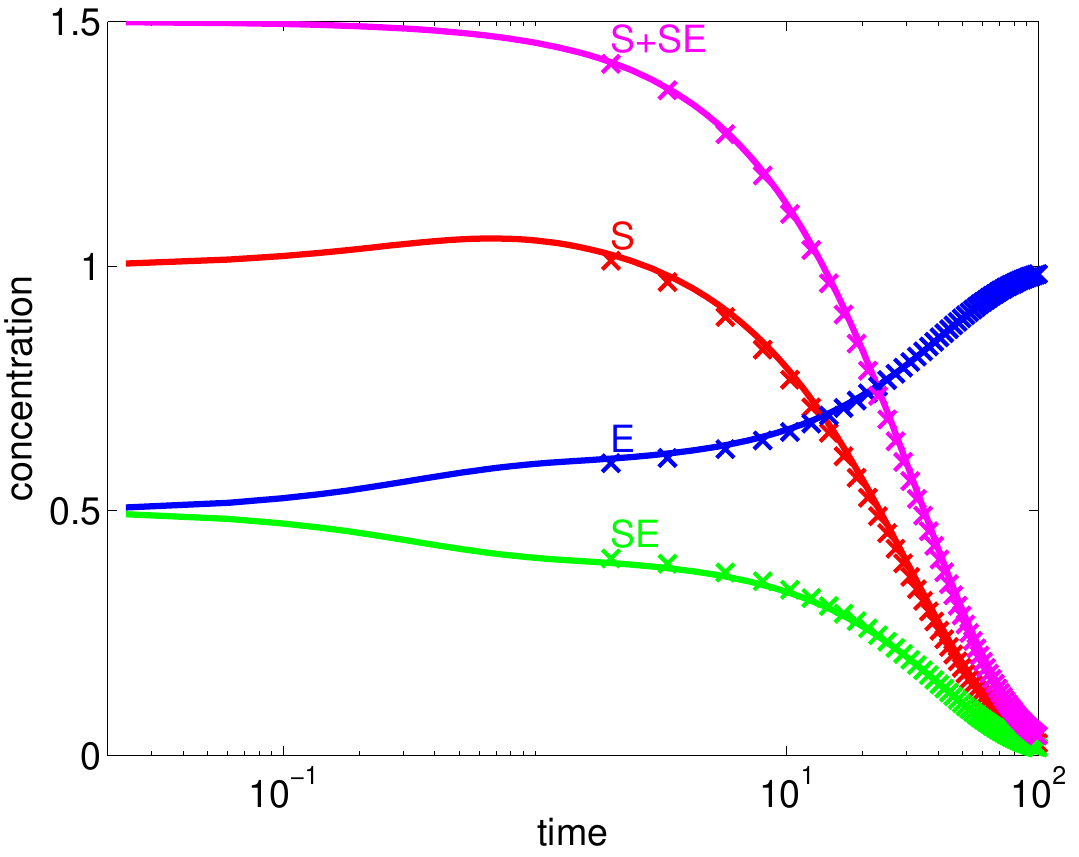}
    \caption{Comparison of numerical solutions obtained with the full Michaelis-Menten model (continuous lines) and 
    with the reduced model (crosses). The variables $S$, $E$ and $SE$ are fast and slaved 
    (in the reduced model, their values are obtained as solutions of algebraic equations) and the approximate conservation law $S+SE$ is slow. The initial values are such that $E$ and $S$ have  concentrations of the same order of magnitude; this is 
    not compatible with the quasi-steady state (QSS) approximation \cite{segel1989quasi,samal2015geometric}
    of the MM mechanism but is compatible with the quasi-equilibrium (QE) approximation that has been used here.
    As discussed in \cite{noelgvr,rvg,samal2015geometric} the QE and QSS approximations correspond to scalings obtained from different tropical
    equilibration solutions (the total equilibration for QE and a partial equilibration solution for QSS).
    }
    \label{fig:figure1}
\end{figure}

\subsection{Approximate Conservation Laws as Slow Variables}\label{sec:acslow}

We have seen in the previous section that the approximate 
linear conservation laws of the Michaelis-Menten model are
either exact conservation laws or slow variables.
In this section we show that this property is true in general for
any polynomial CRN model of type \eqref{eq:fi} and for all linear, monomial, 
or polynomial approximate conservation laws.





\subsubsection{Linear Approximate Conservation Laws as Slow Variables}

Linear approximate conservation laws correspond to ``pools'' of species that are conserved by fast cycling 
reactions. The exact and approximate linear conservation laws usually correspond to the total
number of copies of a certain type of molecule in the pool.
For instance, in Example~\ref{MM}, there
is an exact and an approximate linear conservation law corresponding to 
the total numbers of enzyme and substrate molecules, respectively. 
The fast part of the dynamics ends with the equilibration of all species in the fast pool. This state is named quasi-equilibrium (QE) \cite{gorban2010asymptotology}.


Let us consider an approximate linear conservation law
$$\phi(\vx)=\sum_{i=1}^n c_i x_i, \quad \mathrm{where} \ c_i \in \NN ,$$ 
which is conserved by the truncated ODE system \eqref{truncatedx}. 
Note that in the linear combination defining $\phi(\vx)$ we also admit that some of the coefficients 
$c_i$ are zero. 
Let $I = \{i \mid c_i \neq 0\}$ denote the set of indices
of species involved in the fast pool quantity.
Furthermore, in most applications the $c_i$ are small positive integers and we will assume that $c_i = \Ord{1}$. 
Using the Definitions~\ref{def:exactandapproxcons} and \ref{def:irreducibleConsLaw} we obtain the following result.


\begin{theorem}\label{th:slowlinear1}
  If $\phi(\vx)$ is  \cor{a simple} linear approximate conservation law, then 
  the variable $q = \phi(\vx)$ is either constant or slower
   than all the variables $\{ x_i \mid i \in I \}$
  of the system  \eqref{eq:fi}. Furthermore, if the timescales of all $x_i$ with $i \in I$ have the
  same order, then the concentrations of these variables have the same orders. 
\end{theorem}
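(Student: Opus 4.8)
The plan is to differentiate $q=\phi(\vx)$ along the full system \eqref{eq:fi}, observe that the dominant part of $\dot q$ cancels because $\phi$ is an approximate conservation law, and then control the order of magnitude of the surviving part by showing that irreducibility forces a rigid relation between the concentration orders $d_i$ and the timescale orders of the species in $I$.

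First I would extract the algebraic content of Definition~\ref{def:exactandapproxcons}\ref{def:exactandapproxconsb}. Since $\phi$ is linear, $\sum_{i=1}^n c_i f_i^{(1)}(\vk,\vx)\equiv 0$ as a polynomial, and because the monomials $k_j\vx^{\valpha_j}$ are pairwise distinct in $\ZZ[\vk,\vx]$, this is equivalent to $\sum_i c_i S_{ij}^{(1)}=0$ for every $j$; that is, $\vc$ is a left null vector of the truncated stoichiometric matrix $\vect{S}^{(1)}$ from \eqref{S1}. Differentiating $q$ along \eqref{eq:fi} and splitting $f_i=f_i^{(1)}+f_i^{(2)}$ then gives $\dot q=\sum_{i\in I}c_i f_i^{(2)}(\vk,\vx)$. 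If this vanishes identically, $q$ is an exact conservation law, which is the ``constant'' alternative of the statement; otherwise I must estimate its order.

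The key geometric step is to read irreducibility as a connectivity property. I would introduce the graph $G$ on vertex set $I$ with an edge $i\sim i'$ whenever some reaction $j$ is dominant in both equations, i.e. $S_{ij}^{(1)}\neq 0\neq S_{i'j}^{(1)}$. If $G$ were disconnected, then every reaction touches pool species of a single component, so restricting $\vc$ to a connected component $I_t$ still satisfies $\sum_{i\in I_t}c_i S_{ij}^{(1)}=0$; this would split $\phi$ as a sum of two conservation laws depending on strictly fewer variables, contradicting Definition~\ref{def:irreducibleConsLaw}. Hence $G$ is connected. For an edge $i\sim i'$ carried by a reaction $j$, the shared monomial $k_j\vx^{\valpha_j}$ is dominant in both rows, so by \eqref{eq:timescale} its order $e_j+\langle\vect{d},\valpha_j\rangle$ equals both $d_i+V(\dot x_i/x_i)$ and $d_{i'}+V(\dot x_{i'}/x_{i'})$; therefore these two quantities coincide. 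Propagating this equality along paths in the connected graph $G$ produces a single constant $L:=d_i+V(\dot x_i/x_i)$ valid for all $i\in I$, meaning every fast flux $\dot x_i$ with $i\in I$ has the same order $L$.

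Finally I would close the estimate by min-plus bookkeeping. Positivity of the $c_i$ rules out cancellation in $q$ itself, so $V(q)=\min_{i\in I}d_i$. Each $f_i^{(2)}$ collects exactly the non-dominant monomials of row $i$, whose orders $e_j+\langle\vect{d},\valpha_j\rangle$ strictly exceed $d_i+V(\dot x_i/x_i)=L$; hence $V(f_i^{(2)})>L$ for every $i\in I$, and by subadditivity $V(\dot q)\geq\min_{i\in I}V(f_i^{(2)})>L$, with possible cancellations only raising this value. Consequently the timescale order of $q$ obeys $V(\dot q/q)=V(\dot q)-V(q)>L-\min_{i\in I}d_i=\max_{i\in I}(L-d_i)=\max_{i\in I}V(\dot x_i/x_i)$, so $q$ is strictly slower than every $x_i$ with $i\in I$. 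The supplementary claim is then immediate from $d_i=L-V(\dot x_i/x_i)$: equal timescale orders across $I$ force equal concentration orders $d_i$. I expect the main obstacle to be precisely the passage from irreducibility to connectivity of $G$ and the resulting invariance of $d_i+V(\dot x_i/x_i)$; once that rigidity is established, the order comparison is routine.
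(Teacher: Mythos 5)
Your proof is correct and follows essentially the same route as the paper's (Lemma~\ref{lem:1} together with the proof of Theorem~\ref{th:slowlinear1}): cancellation of the truncated part $\sum_i c_i f_i^{(1)}\equiv 0$, irreducibility yielding a chain of shared dominant monomials so that all $f_i^{(1)}$ with $i\in I$ have a common order, and the strict gap $b'_i>0$ forcing $\mu_q>\mu_i$, with the second claim read off from $d_i = L - \mu_i$. Your explicit graph-on-$I$ formulation merely spells out the connectivity step that the paper asserts from irreducibility, and your observation that cancellations can only raise $V(\dot q)$ is a slightly more careful rendering of the same order estimate.
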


Before we start with the proof of the theorem, we show the following lemma.
\begin{lemma}\label{lem:1}
Let $$\phi(\vx)=\sum_{i=1}^{n} c_i x_{i}$$ be  \cor{a simple} linear approximate conservation law and let $I$ be defined as above.
Then all the polynomials $f_i^{(1)}(\vk,\vx)$ in the truncated system \eqref{truncatedx}
have the same order in $\delta$, i.e.~$od_i + b_i=o d_{i'} + b_{i'}$ for all $i,\, i' \in I$. If furthermore, the timescales of the
variables $x_i$ have the same order in $\delta$ for all $i \in I$,
i.e.~$b_i=b_{i'}$ for all $i,\,i'\in I$,
then the concentrations $x_i$ have also the same
order in $\delta$, i.e.~$d_i=d_{i'}$ for all $i,\, i'\in I$.
\end{lemma}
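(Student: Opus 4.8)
The plan is to convert the defining identity of $\phi$ into a purely combinatorial condition on the truncated stoichiometric matrix $\vect{S}^{(1)}$, to read the orders off the retained monomials, and then to use irreducibility to force all species indexed by $I$ to share a common order. First I would use that $\phi$ is linear, so $\partial\phi/\partial x_i = c_i$ and part~\ref{def:exactandapproxconsb} of Definition~\ref{def:exactandapproxcons} reads $\sum_{i\in I} c_i f_i^{(1)}(\vk,\vx) = 0$ identically on $\RRpp^r\times\RRpp^n$, hence as an identity in $\RR[\vk,\vx]$. Substituting $f_i^{(1)}(\vk,\vx)=\sum_{j=1}^r S_{ij}^{(1)}k_j\vx^{\valpha_j}$ and collecting terms by the pairwise distinct variables $k_j$, the coefficient of each $k_j\vx^{\valpha_j}$ must vanish, giving $\sum_{i\in I} c_i S_{ij}^{(1)}=0$ for every $j$, i.e.\ $\transpose{\vc}\,\vect{S}^{(1)}=0$.

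Next I would compute the order carried by each retained monomial. For $i\in I$ and any $j$ with $S_{ij}^{(1)}\neq0$, definition \eqref{S1} forces $a_{ij}=a_i$; unwinding $\psi_{ij}=e_j+\langle\vect{d},\valpha_j\rangle-d_i$ from \eqref{rescaled} together with $a_{ij}=\psi_{ij}-\mu$ and $b_i=o a_i$ shows that the valuation $V(k_j\vx^{\valpha_j})=e_j+\langle\vect{d},\valpha_j\rangle$ equals $d_i+a_i+\mu=(od_i+b_i)/o+\mu=:\nu_i$, independently of the particular $j$. Thus every monomial surviving in $f_i^{(1)}$ has the same valuation $\nu_i$, and the first claim is exactly the statement that $\nu_i$ does not depend on $i\in I$.

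To glue the species together I would introduce the graph $G$ on the vertex set $I$ with an edge $i\sim i'$ whenever some reaction $j$ satisfies $S_{ij}^{(1)}\neq0\neq S_{i'j}^{(1)}$. If $i\sim i'$ through reaction $j$, the previous step gives $\nu_i=V(k_j\vx^{\valpha_j})=\nu_{i'}$, so $\nu$ is constant on each connected component of $G$, and it suffices to show that $G$ is connected. The main obstacle is precisely this connectivity, and here irreducibility enters: if $G$ were disconnected I would split $I=I_1\sqcup I_2$ into nonempty unions of components, so that no reaction links $I_1$ to $I_2$ and hence, for every $j$, the support $\{i\in I:S_{ij}^{(1)}\neq0\}$ lies entirely in $I_1$ or entirely in $I_2$. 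Then $\transpose{\vc}\,\vect{S}^{(1)}=0$ splits into $\sum_{i\in I_1}c_iS_{ij}^{(1)}=0$ and $\sum_{i\in I_2}c_iS_{ij}^{(1)}=0$ for all $j$, so $\phi_1=\sum_{i\in I_1}c_ix_i$ and $\phi_2=\sum_{i\in I_2}c_ix_i$ are themselves approximate linear conservation laws with $\phi=\phi_1+\phi_2$, each depending on fewer than $|I|$ variables, contradicting irreducibility in the sense of Definition~\ref{def:irreducibleConsLaw}. The same splitting handles the degenerate possibility that some $i\in I$ occurs in no retained reaction, since then $c_ix_i$ would split off on its own; equivalently, $\transpose{\vc}\,\vect{S}^{(1)}=0$ already rules out a reaction touching a single pool species.

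Consequently $G$ is connected, all $\nu_i$ coincide, and $od_i+b_i$ is constant over $i\in I$, which is the first assertion of the lemma. The second assertion is then immediate: combining $od_i+b_i=od_{i'}+b_{i'}$ with the extra hypothesis $b_i=b_{i'}$ yields $od_i=od_{i'}$, that is $d_i=d_{i'}$ for all $i,i'\in I$.
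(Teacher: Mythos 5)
Your proof is correct and follows essentially the same route as the paper's: the conservation identity is used to force monomial cancellation (your coefficient-wise version $\transpose{\vc}\,\vect{S}^{(1)}=0$), shared monomials propagate a common valuation along chains since all monomials retained in each $f_i^{(1)}$ have the same order, irreducibility connects all of $I$, and the second claim follows by combining $od_i+b_i=od_{i'}+b_{i'}$ with $b_i=b_{i'}$. The only difference is that you spell out, via the graph-disconnection/splitting contrapositive, the connectivity step that the paper merely asserts from irreducibility (including the isolated-vertex case $f_i^{(1)}=0$), which is precisely where Definition~\ref{def:irreducibleConsLaw} is used and is a welcome sharpening rather than a different approach.
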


\begin{proof}[Proof of Lemma~\ref{lem:1}]
In the variables $\vx$ the truncated system reads as  (see \eqref{truncatedx})
$$\dot x_i = \delta^{od_i} \dot y_i = f_i^{(1)} (\vk,\vx), \quad \mathrm{where} \  f_i^{(1)} (\vk,\vx) = \delta^{od_i+b_i+o\mu} \bar f_i^{(1)} (\vkbar,\vy).$$ 
Since $\phi(\vx)$ is conserved by system \eqref{truncatedx},
one has $$\sum_{i=1}^r c_i  f_i^{(1)} (\vk, \vx) = 0$$ for all $\vk$, $\vx$. 
This can only be satisfied if for all $i\in I$ there is 
at least one $j \in I$ such that $f_i^{(1)} (\vk,\vx)$ and $f_j^{(1)} (\vk,\vx)$
have a common monomial. Since $q$ is \cor{simple,}
then
for all $i,\, j\in I$ with $i\neq j$
either  $f_i^{(1)} (\vk,\vx)$ and $f_j^{(1)} (\vk,\vx)$ share a monomial
or there is finite sequence $i=i_0,\, i_1,\, \ldots,\, i_k = j$ such that
$f_{i_l}^{(1)} (\vk,\vx)$ and $f_{i_{l+1}}^{(1)} (\vk,\vx)$ share a monomial for
$0 \leq l \leq k$. 
Since by the definition of the truncated
system all the monomials in $f_i^{(1)} (\vk,\vx)$ have the same order, it follows that all the
polynomials  $f_i^{(1)} (\vk,\vx)$ for some $i\in I$ have the same order $\nu=od_i+b_i+o\mu$. 
The timescale order $\mu_i$ of $x_i$ is the order of $$\frac{\dot x_i}{x_i}= \frac{ f_i^{(1)} (\vk,\vx) }{x_i},$$
namely $\mu_i = \nu - o d_i = b_i+o\mu$. Thus, if all $b_i$ are equal, then all 
 $d_i$ are equal, for $i \in I$. 
\end{proof}

\begin{proof}[Proof of Theorem~\ref{th:slowlinear1} ]
From the definition of $q$ it follows that $$\dot q =  \sum_{i\in I}c_i \dot x_i.$$
Since $q$ is conserved by the truncated system \eqref{truncatedx}, we have that 
\begin{equation}\label{eq:lin1}
\sum_{i=1}^n c_i \delta^{b_i+od_i+o\mu} \bar f_i^{(1)}(\vkbar,\vy) = 0
\end{equation}
 for all $\vkbar$, $\vy$, $\delta$ and so
\[
\dot{q} = \sum_{i=1}^n c_i \delta^{b_i+od_i+o\mu} \left(\bar f_i^{(1)}(\vkbar,\vy)+\delta^{b'_i} \bar f_i^{(2)}(\vkbar,\vy,\delta) \right) =
\sum_{i=1}^n c_i \delta^{b_i+b'_i+ od_i+ o\mu}\bar f_i^{(2)}(\vkbar,\vy, \delta).
\]
Thus $\dot q=0$ if $f_i^{(2)}(\vkbar,\vy, \delta)$ vanishes identically. In this case the approximate conservation law is also an exact one. If this is not the case,
we can define $\mu_q$ as the order of the timescale of $q$, i.e.~the order of $\frac{\dot q}{q}$.
The order of $q$ is  $\min \{ o d_i \mid i\in I \}$ and so
\[
\mu_q = \min  \{ b_i + b_i' + od_i + o\mu \mid  i \in  I \} - \min \{ o d_i \mid i\in I \}.
\] 

We prove now that $q$ is slower than all $x_i$ with
$i\in I$, that is, we need to check that
$b_i + o \mu< \mu_q$ for all $i \in I$.
These conditions are equivalent to 
\[
b_i < \min  \{ b_j + b_j' + od_j \mid  j \in  I \} - \min \{ o d_j \mid j\in I \} \quad \mathrm{for} \ \mathrm{all} \  i\in I.
\]
According to the Lemma~\ref{lem:1}, 
$b_i = \nu -o\mu - o d_i$ for all $i\in I$.
Obviously, it is enough to prove that
\[
\nu - o\mu -\min \{ od_j \mid j\in I \} < 
\min  \{ b_j + b_j' + od_j \mid  j \in  I \} - \min \{ o d_j \mid j\in I \},
\]
which leads us to the inequation
$\nu -o \mu < \min  \{ b_j + b_j' + od_j \mid  j \in  I \}$.
Since $\nu = b_j + od_j + o \mu$ and $b_j' > 0$ for all $j \in I$,
all the above inequalities are satisfied.

The second part of the Theorem follows from the Lemma~\ref{lem:1}.
\end{proof}

Theorem~\ref{th:slowlinear1} suggests that there is a link between timescales 
and concentrations of species contributing to a linear approximate conservation law. This link can be made more precise by using the vectors $\vx_k$ introduced in Section~\ref{sec:formal_scaling}, which 
group variables with the same timescale orders $\mu_k$ for   
$1\leq k \leq m$. We have the following structure theorem for approximate linear conservation laws.
\begin{theorem}\label{th:structurelinear}
If $\phi(\vx_1,\ldots,\vx_l)$ is  \cor{a simple} linear approximate conservation law depending on variables having timescale orders smaller than or equal
to $\mu_l$, then 
\begin{equation}\label{eq:lindec1}
\phi(\vx_1,\ldots,\vx_l) = \sum_{k=1}^l \delta^{d_q + \mu_l - \mu_k} \langle \vc_k,\vz_k \rangle,
\end{equation}
where $\mu_1 < \mu_2 < \ldots < \mu_l$, $\vc_k \in \RR^{n_k}$  and $d_q$ is the 
order of the variable $q= \phi(\vx_1,\ldots,\vx_l)$. Thus, $$\phi(\vx_1,\ldots,\vx_l) = \phi^{(1)}(\vx_l) + 
\phi^{(2)}(\vx_1,\ldots,\vx_{l-1}),$$ where
$$\phi^{(1)}(\vx_l) = \delta^{d_q } \langle \vc_l,\vz_l \rangle \quad \mathrm{and} \quad \phi^{(2)}(\vx_1,\ldots,\vx_{l-1}) = \ord{\delta^{d_q }}.$$
$\phi^{(1)}(\vx_l)$ contains the dominant lowest order terms of $\phi$. 
In other words, variables of the same timescale orders contribute to
terms of the same order in the conservation law; the dominant terms in the
conservation law depend only on the slowest variables $\vx_l$. 
\end{theorem}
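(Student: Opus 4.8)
The whole argument rests on a single order identity that I would extract from Lemma~\ref{lem:1}. Since $\phi$ is an irreducible linear approximate conservation law, that lemma (and its proof) tells us that all the truncated right-hand sides $f_i^{(1)}(\vk,\vx)$ with $i\in I$ share one common $\delta$-order $\nu = o d_i + b_i + o\mu$. Writing the timescale order of $x_i$ as $\mu_i = b_i + o\mu$, this says $o d_i + \mu_i = \nu$, that is
$$o d_i = \nu - \mu_i \qquad (i \in I).$$
This is the crux: inside the conserved pool $I$ the concentration order of a species is rigidly fixed by its timescale order. In particular any two indices of $I$ lying in the same timescale group $\vz_k$ (so $\mu_i = \mu_k$) have the same concentration order $o d_i = \nu - \mu_k$, which already yields the second assertion of the theorem.

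Next I would substitute $x_i = y_i\delta^{o d_i}$ into $\phi(\vx)=\sum_{i\in I}c_i x_i$ and regroup the sum by timescale order. For the group of timescale order $\mu_k$ every contributing index has $o d_i = \nu-\mu_k$, so the common factor $\delta^{\nu-\mu_k}$ pulls out and what remains is $\sum_{i\text{ in group }k}c_i y_i = \langle\vc_k,\vz_k\rangle$, where $\vc_k\in\RR^{n_k}$ collects the $c_i$ (automatically zero for indices outside $I$). This produces $\phi = \sum_{k=1}^{l}\delta^{\nu-\mu_k}\langle\vc_k,\vz_k\rangle$ immediately, and it only remains to rewrite the exponent.

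To finish I would identify $\nu$ with $d_q+\mu_l$. Because the $c_i$ are positive and $y_i,\delta>0$ there is no cancellation of leading terms, so the $\delta$-order of $q=\phi$ equals $\min\{o d_i\mid i\in I\}$, exactly as in the proof of Theorem~\ref{th:slowlinear1}. By the identity above this minimum is $\nu-\max_{i\in I}\mu_i=\nu-\mu_l$, hence $d_q=\nu-\mu_l$ and $\nu-\mu_k = d_q+\mu_l-\mu_k$, which is \eqref{eq:lindec1}. Isolating the $k=l$ term gives $\phi^{(1)}(\vx_l)=\delta^{d_q}\langle\vc_l,\vz_l\rangle$; the remaining terms carry exponents $d_q+(\mu_l-\mu_k)$ with $\mu_l-\mu_k>0$ and order-zero factors $\langle\vc_k,\vz_k\rangle$, so together they form $\ord{\delta^{d_q}}$.

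Every step after the order identity is routine bookkeeping, so I expect no conceptual obstacle beyond two points of care: first, importing Lemma~\ref{lem:1} in the sharp form $o d_i=\nu-\mu_i$ rather than merely the statement that $o d_i+b_i$ is constant; and second, verifying that the slowest timescale group $\vz_l$ is precisely the one carrying the lowest, i.e.\ dominant, concentration order, so that $\phi^{(1)}$ really collects the leading terms of $\phi$. The main risk is notational — keeping the three distinct orders (concentration $o d_i$, timescale $\mu_i$, and the common polynomial order $\nu$) straight and aligning the index grouping $\vz_k,\vc_k$ with the conventions fixed in Section~\ref{sec:formal_scaling}.
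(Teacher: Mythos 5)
Your proof is correct and takes essentially the same route as the paper's: the paper likewise derives the single order identity $\mu_i + o d_i = \mu_l + d_q$ (your $o d_i = \nu - \mu_i$ combined with $\nu = d_q + \mu_l$) from the conservation identity \eqref{eq:lin1}, i.e.\ from the content of Lemma~\ref{lem:1}, and then regroups the coefficients by timescale into the vectors $\vc_k$ to obtain \eqref{eq:lindec1}. Your two flagged points of care (using the sharp form $o d_i = \nu - \mu_i$, and noting that positivity of the $c_i$ precludes cancellation so that $d_q = \min\{o d_i \mid i \in I\}$ and the slowest group $\vz_l$ carries the dominant terms) are exactly what the paper leaves implicit, so if anything your write-up is slightly more explicit than the original.
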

\begin{proof}
From Equation~\eqref{eq:lin1} it follows that 
\begin{equation}\label{eq:orders}
\mu_i + o d_i =  \mu_l + d_q 
\end{equation}
for all $i$ with $ c_i \neq 0$, where $d_q = \min \left\{ o d_i \mid c_i \neq 0 \right\}$.
From regrouping the coefficients $c_i$ into vectors $\vc_k$ corresponding to variables of the same timescales, we
obtain 
\begin{equation}\label{eq:lindec2}
\phi = \sum_{k=1}^l \langle \vc_k,\vx_k \rangle = \sum_{k=1}^l \delta^{o d_k} \langle \vc_k,\vz_k \rangle.
\end{equation}
Finally, Equation \eqref{eq:lindec1} follows from \eqref{eq:lindec2} and \eqref{eq:orders}.
\end{proof}

\begin{remark}
A set of independent \cor{simple} approximate conservation laws 
 can be obtained from the truncated stoichiometric matrix $\vect{S}^{(1)}$
by using algorithms for the computation of a basis of \cor{simple} 
vectors 
of the left kernel of a given integer coefficient matrix, \cor{i.e. vectors that
can not be decomposed as a sum of two non-zero kernel vectors having more zero elements} \cite{schuster1991determining}.
\end{remark}

\begin{example}  
  Consider the chemical reaction network
\[
  \begin{tikzcd}[row sep=huge, column sep=huge, every arrow/.append style={shift left=.75}]
   x_{1} \arrow[d, bend left,"k_{1}"] & x_{3} \arrow[d, bend left, "k_{3}"] \\
   x_{2} \arrow[u, bend left, "k_{2}"] \arrow[ur, "k_{5}"] & x_{4}
   \arrow[u, bend left, "k_{4}"]
\end{tikzcd}
\]

If the dynamics of this reaction network is of mass-action form, then
it is given by the system of ODEs
\begin{eqnarray*}
    \dot x_{1} &=& k_{2}x_{2}-k_{1}x_{1},\\
    \dot x_{2} &=& k_{1}x_{1}-(k_{2}+k_{5})x_{2},\\
    \dot x_{3} &=& k_{5}x_{2} + k_{4} x_4 - k_{3}x_{3}, \\
    \dot x_{4} &=& k_{3}x_{3}  - k_{4}x_{4}.
\end{eqnarray*}
Let us assume that the parameter orders are  $e_1=e_2=e_3=e_4=0$, $e_5=1$.
Then the total tropical equilibrations are solutions of the
system $d_1=d_2$, $\min\{ d_4, \, d_2+1\}=d_3$, $d_3=d_4$, that is 
$d_1=d_2 \geq d_4-1$, $d_3=d_4$. Assuming that the concentration orders are given by the
 total tropical equilibration $d_1=d_2=-1$ and $d_3=d_4=-2$,
 we obtain the rescaled system
 \begin{eqnarray*}
    \dot y_{1} &=& \bar k_{2}y_{2} - \bar k_{1}y_{1},\\
    \dot y_{2} &=& \bar k_{1}y_{1}-(\bar k_{2}+ \epsilon \bar k_{5})y_{2},\\
    \dot y_{3} &=& \epsilon^2 \bar k_{5}y_{2} + \bar k_{4} y_4 - \bar k_{3}y_{3}, \\
    \dot y_{4} &=& \bar k_{3}y_{3}  - \bar k_{4}y_{4}.
 \end{eqnarray*}
Since $\epsilon$ occurs only with integer powers we have $\delta=\epsilon$.
All species $x_i$ for $1\leq i \leq 4$
have the same timescale orders $\mu_i=0$ .
The truncated system is 
\begin{eqnarray*}
    \dot x_{1} &=&  k_{2}x_{2} -  k_{1}x_{1},\\
    \dot x_{2} &=&  k_{1}x_{1}-  k_{2} x_{2},\\
    \dot x_{3} &=&  k_{4} x_4 -  k_{3}x_{3}, \\
    \dot x_{4} &=&  k_{3}x_{3}  -  k_{4}x_{4} 
\end{eqnarray*}
and the truncated stoichiometric matrix reads as
$$\vect{S}^{(1)} = 
\begin{pmatrix}
-1 & 1 &0 &0 \\
1 & -1 &0 &0  \\
0 & 0  &-1& 1 \\
0 & 0  & 1 &-1
\end{pmatrix}.
$$
The truncated system has two \cor{simple} conservation laws
$\phi_1(\vx) = x_1+x_2$ and $\phi_2(\vx)=x_3+x_4$. These correspond to the species 
pools $q_1=x_1+x_2$ and $q_2=x_3 + x_4$ that have timescale orders
$\mu_{q_1} = 1$ and $\mu_{q_2} = 2$, respectively.
Thus, $q_1$ and $q_2$
are slower than the species $x_i$ with $1\leq i \leq 4$.
Note that the species concentrations in these
pools have equal orders $d_1=d_2$ and $d_3=d_4$, consistent
with the fact that in \cor{simple} pools, species with the same timescales have
the same concentration orders (see Lemma~\ref{lem:1}).
This CRN has also the exact conservation law $\phi_1(\vx)+\phi_2(\vx)$, but this conservation law is
not \cor{simple}. 
\end{example}

\subsubsection{Monomial Approximate  Conservation Laws as Slow Variables}


We consider now a monomial conservation law
$$\phi(\vx)=\prod_{i=1}^n x_i^{m_i}$$ of 
the truncated system \eqref{truncatedx}.
As in Section~\ref{sec:exdef}, 
we admit that some of the exponents $m_i$ can be zero and define
$I = \{ i \mid 1\leq i \leq n, \, m_i \neq 0 \}$.
This means that the variables $x_i$ with $i \notin I$ do not appear in the conservation law.

\begin{theorem}\label{th:monomialslow}
  If $\phi(\vx)$ is  \cor{a  simple} monomial approximate
  conservation law, then the variable $q=\phi(\vx)$ is slower 
  than all the variables
  $\{ x_{i} \mid i \in I \}$ of system \eqref{eq:fi}. Furthermore, all
  $x_i$ with $ i \in I$ have the same timescale orders.
\end{theorem}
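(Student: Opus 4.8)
The plan is to linearize the problem by passing to logarithmic derivatives, which turns a monomial first integral into the additive setting of Theorem~\ref{th:slowlinear1}. Since $\partial\phi/\partial x_i = (m_i/x_i)\,\phi$, the approximate conservation condition $\sum_i (\partial\phi/\partial x_i)\,f_i^{(1)} = 0$ divides by the nonvanishing $\phi$ to give the identity
\begin{equation*}
\sum_{i\in I} m_i\,\frac{f_i^{(1)}(\vk,\vx)}{x_i} = 0 .
\end{equation*}
The crucial structural fact, inherited from the definition of the truncated system, is that every monomial of $f_i^{(1)}$ carries the same $\delta$-order, so $f_i^{(1)}/x_i$ is \emph{order-homogeneous}: all of its monomials have the single order $\mu_i$, which is precisely the timescale order of $x_i$ (the order of $\dot x_i/x_i$). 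This is the monomial analogue of Lemma~\ref{lem:1}, and the logarithmic derivative $\dot q/q = \sum_{i\in I} m_i\,\dot x_i/x_i$ will play the role that $\dot q$ played in the linear proof.

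First I would prove the ``furthermore'' claim that all $x_i$ with $i\in I$ share one timescale order. Suppose not, and set $\mu_{\min} = \min_{i\in I}\mu_i$ and $I' = \{\,i\in I \mid \mu_i = \mu_{\min}\,\} \subsetneq I$. Because each $f_i^{(1)}/x_i$ is order-homogeneous, the terms of order $\mu_{\min}$ in the displayed identity are exactly those with $i\in I'$, while the remaining terms have strictly larger order; collecting the lowest-order part therefore forces $\sum_{i\in I'} m_i\,f_i^{(1)}/x_i = 0$, and subtraction yields $\sum_{i\in I\setminus I'} m_i\,f_i^{(1)}/x_i = 0$ as well. Reversing the logarithmic computation, $\phi' = \prod_{i\in I'} x_i^{m_i}$ and $\phi'' = \prod_{i\in I\setminus I'} x_i^{m_i}$ are then both approximate conservation laws with $\phi = \phi'\,\phi''$, each depending on fewer than $|I|$ variables. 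This contradicts irreducibility (Definition~\ref{def:irreducibleConsLaw}), so $I' = I$ and all $\mu_i$ equal a common value $\mu_*$.

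For the main claim I would compute the timescale order of $q$ directly. Using $\dot x_i = f_i^{(1)} + f_i^{(2)}$ together with the conservation identity, the logarithmic derivative collapses to
\begin{equation*}
\frac{\dot q}{q} = \sum_{i\in I} m_i\,\frac{f_i^{(2)}(\vk,\vx)}{x_i} .
\end{equation*}
Since the correction $f_i^{(2)}$ exceeds the $\delta$-order of $f_i^{(1)}$ by the positive amount $b_i'$, each summand $f_i^{(2)}/x_i$ has order at least $\mu_i + b_i' = \mu_* + b_i' > \mu_*$; as the order of a sum is at least the minimum of the orders of its terms, the timescale order of $q$ satisfies $\mu_q \geq \mu_* + \min_{i\in I} b_i' > \mu_* = \mu_i$, so $q$ is slower than every $x_i$ with $i\in I$. (If the right-hand side vanishes identically, then $q$ is an exact conservation law and hence constant, the degenerate analogue of the linear case.)

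The step I expect to be the main obstacle is the argument for equal timescale orders, specifically the passage from a vanishing lowest-order part $\sum_{i\in I'} m_i f_i^{(1)}/x_i = 0$ back to a genuine monomial factor $\phi' = \prod_{i\in I'} x_i^{m_i}$ of the conservation law. One must check that this induced $\phi'$ really is an approximate conservation law in the sense of Definition~\ref{def:exactandapproxcons} and that both factors depend on strictly fewer variables, so that Definition~\ref{def:irreducibleConsLaw} applies; the order-homogeneity of $f_i^{(1)}/x_i$ is exactly what makes the lowest-order part isolate cleanly, and it is the feature that distinguishes the monomial case (equal timescales forced unconditionally) from the linear case (where equality of timescales required an additional hypothesis).
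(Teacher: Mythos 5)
Your proof is correct, and while its overall skeleton matches the paper's (pass to the logarithmic derivative, divide the conservation identity by $\phi$ to get $\sum_{i\in I} m_i f_i^{(1)}/x_i = 0$, then use $b_i'>0$ to bound the order of $\dot q/q = \sum_{i\in I} m_i f_i^{(2)}/x_i$), your argument for the ``furthermore'' claim is genuinely different at the key step. The paper proves that all $x_i$, $i\in I$, have equal timescale orders by a connectivity argument: irreducibility is asserted to force, between any $i,j\in I$, a chain of indices along which consecutive $f_{i_l}^{(1)}/x_{i_l}$ share a common monomial, and since a shared monomial has a single well-defined order while each $f_i^{(1)}/x_i$ is order-homogeneous, the common order propagates along the chain. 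You instead split the identity by order level: cancellation in a polynomial identity happens only between identical formal monomials, which necessarily carry the same order, so the minimal-order sub-sum $\sum_{i\in I'} m_i f_i^{(1)}/x_i$ must vanish on its own, and exponentiating yields the explicit factorization $\phi = \phi'\,\phi''$ into two approximate rational monomial conservation laws, each on strictly fewer variables, contradicting Definition~\ref{def:irreducibleConsLaw}. This factorization-by-contradiction is in effect the rigorous contrapositive that underlies the paper's unproved connectivity assertion (a disconnected sharing graph would likewise split the sum and factor $\phi$), so your route makes explicit what the paper leaves implicit, and it avoids the chain construction entirely. Two further points in your favor: you state only the inequality $\mu_q \geq \mu_* + \min_{i\in I} b_i' > \mu_*$, which suffices for slowness, whereas the paper writes $\mu_q = \min\{b_i + b_i' + o\mu \mid i\in I\}$ and thereby tacitly assumes no cancellation among the surviving lowest-order terms of $\sum_{i\in I} m_i f_i^{(2)}/x_i$; and your parenthetical treatment of the identically vanishing case (then $q$ is exact, hence constant) supplies a caveat that the paper states explicitly only in the linear case, Theorem~\ref{th:slowlinear1}. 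What the paper's chain argument buys in exchange is uniformity: the same sharing-chain template is reused verbatim in Lemma~\ref{lem:1} and in the polynomial case, Theorem~\ref{th:polyslow}, whereas your order-splitting trick exploits the multiplicative structure special to monomials (as you correctly note, it is exactly why equal timescales are forced unconditionally here but require an extra hypothesis in the linear setting).
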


\begin{proof}
Let us note that $$\dot q = q \sum_{i\in I}m_i \frac{\dot x_i}{x_i}.$$
Since $\phi(\vx)$ is conserved by the truncated system \eqref{truncatedx}, we have that
\begin{equation}\label{conditionmonomial}
\sum_{i\in I}  \frac{m_i}{x_i} 
 f_i^{(1)}(\vk,\vx) = 0,
 \end{equation}
for all $\vk$, $\vx$.
As $\frac{ f_i^{(1)} }{x_i}$ is a sum of rational 
monomials, \eqref{conditionmonomial}
 is only satisfied if for any $i\in I$ there is  $j\in I$ such that
$\frac{ f_i^{(1)} }{x_i}$ and $\frac{ f_j^{(1)}}{x_j}$
share a common monomial. Since $\phi(\vx)$ is \cor{simple}, for all
$i,\, j\in I$ either $\frac{ f_i^{(1)}}{x_i}$ and $\frac{ f_j^{(1)}}{x_j}$ share a common
monomial or there is a sequence  $i=i_0,\, i_1,\, \ldots,\,i_k=j$
such that $\frac{ f_{i_l}^{(1)}}{x_{i_l}}$ and $\frac{ f_{i_{l+1}}^{(1)}}{ x_{i_{l+1}} }$ share
a common monomial for $0 \leq l\leq k-1$. Hence, the orders 
 of $\frac{f_i^{(1)}(\vk,\vx)}{x_i}$ are the 
same for all $i \in I $. As
the timescale orders of $x_i$ are the orders of $\frac{f_i^{(1)}(\vk,\vx)}{x_i}$, it follows that all
$x_i$ with $i\in I$ have the same timescale orders.



In order to compute the timescale of $q$ we use
$$\frac{\dot q}{q} = 
\sum_{i\in I} m_i \frac{\dot x_i}{x_i} = 
\sum_{i\in I} m_i \delta^{ b_i + o \mu } 
\frac{ \bar f_i^{(1)}(\vkbar,\vy) + \delta^{b'_i}
\bar f_i^{(2)}(\vkbar,\vy,\delta)   }{y_i} =
\sum_{i\in I} m_i \delta^{ b_i + b'_i + o \mu } 
\frac{ f_i^{(2)}(\vkbar,\vy,\delta)   }{y_i},
$$
where the last equality follows from the fact that 
$\phi(\vx)$ is conserved by \eqref{truncatedx}. 

Let us denote by $\mu_q$ the order of the timescale of $q$, i.e.~the order of $\frac{\dot q}{q}$.
Assuming that all $m_i$ are small integers of order $\Ord{\delta^0}$, it
follows that
$$ \mu_q =  \min \{b_i+b'_i+ o\mu \mid i\in I \}.$$
To prove that $q$ is slower than each of the variables $\{x_i \mid i\in I\}$, we need to show that for all $i\in I$ we have $\mu_i<\mu_q$. Thus we need to prove that $b_i<\min \{b_j+b'_j \mid j\in I  \}$ for all $ i\in I$.
As for all $i,\, j\in I$ we have $b_i=b_j$ and $b'_j>0$, it follows that $ b_i=\min\{b_j \mid j \in I \} <\min\{b_j+b'_j \mid j\in I \}$ for all $ i\in I$.

\end{proof}


\begin{example}\label{example3}
The model 
$$\dot x_1 = x_1 (x_2 - x_1 ) - \delta x_1, \quad  \dot x_2 = x_2 (x_1 - x_2 )$$
is a mass action network described by 
$$A_1 + A_2 \xrightarrow{1} 2A_2,\quad A_1+A_2 \xrightarrow{1} \emptyset,\quad A_2 + A_2 \xrightarrow{1} \emptyset, \quad
A_1 \xrightarrow{\delta} \emptyset.$$
The truncated system 
$$\dot x_1 = x_1 (x_2 - x_1 ) ,\quad  \dot x_2 = x_2 (x_1 - x_2 )$$
has a continuous steady state variety $x_1=x_2$ and on which its Jacobian is singular.
This model has $\phi(x_1,x_2) =x_1 x_2$ as an approximate monomial \cor{simple} conservation law. 
The intersection of the steady state variety with the set $\phi = c_0$ is the point 
$x_1=x_2=c_0/2$. The monomial conservation law is complete,  
since the $2\times 2$ minor of the Jacobian
$$\mathrm{det}(D_{\vx} (x_1(x_2-x_1),x_1x_2)^T )  = - 2x_1^2$$
does not vanish for $x_1 > 0$. 
Including $q = x_1 x_2$ among the variables leads to the 
ODE system
$$\dot x_1 = x_1 (x_2 - x_1 ) - \delta x_1,\quad \dot x_2 = x_2 (x_1 - x_2 ), \quad \dot q = - \delta q.$$
We note that in agreement with the Theorem~\ref{th:monomialslow} $x_1$ and $x_2$ have the same timescale order and
that $q$ is a slower variable.
\end{example}

\subsubsection{Polynomial Approximate Conservation Laws as Slow Variables}


Let $$\phi(\vx)=\sum_{j=1}^r c_j \boldsymbol{x}^{\boldsymbol{m}_j}$$ be an approximate polynomial conservation law, that is conserved by system \eqref{truncatedx}, where $c_j \in  \RR \setminus \{0\}$ and $\boldsymbol{m}_j=(m_{1j},\dots , m_{nj}) \in\NN^n$.
Let $I = \{ i \mid m_{ij} \neq 0 \text{ for some $j$ with } 1\leq j \leq r\}$ so that $\phi(\vx)$ depends only 
on the variables $x_i $ with $i\in I$.

\begin{theorem}\label{th:polyslow}
  If $\phi(\vx)$ is  \cor{a simple} polynomial approximate conservation law, then the variable
   $q =\phi(\vx)$ is
  slower than all variables
  $x_i$ with $i \in I$ of system~\eqref{eq:fi}. Furthermore, if 
  the timescales of the variables $x_i$ have the same order in 
  $\delta$, i.e.~$b_i=b_{i'}$ for all $i,\, i'\in I$, then the 
  monomials in $\phi(\vx)$ have also the same order
  in $\delta$, i.e.~$\langle \vect{d},\boldsymbol{m}_j \rangle=\langle\vect{d},\boldsymbol{m}_{j'}\rangle$
  for all $1 \leq j,\, j' \leq r  $ such that $m_{i,j} \neq 0$ and $m_{i',j'} \neq 0$ for some
  $i,\, i' \in I$.
\end{theorem}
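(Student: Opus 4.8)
The plan is to mirror the structure of the linear and monomial cases (Theorems~\ref{th:slowlinear1} and~\ref{th:monomialslow}), since the mechanism is the same: the conservation condition forces the dominant parts of the relevant equations to cancel, so what survives in $\dot q$ comes only from the strictly higher-order terms $f_i^{(2)}$, which are smaller by positive powers of $\delta$. First I would write $\dot q = \sum_{j=1}^r c_j \sum_{i\in I} m_{ij}\, \boldsymbol{x}^{\boldsymbol{m}_j}\, \dot x_i / x_i$ using the chain rule, and substitute $\dot x_i = f_i^{(1)}(\vk,\vx) + f_i^{(2)}(\vk,\vx)$. The defining property that $\phi$ is conserved by the truncated system~\eqref{truncatedx} gives $\sum_i \DP{\phi}{x_i} f_i^{(1)} = 0$ identically in $\vk,\vx$, so all contributions from the $f_i^{(1)}$ terms vanish and $\dot q = \sum_i \DP{\phi}{x_i}\, f_i^{(2)}(\vk,\vx)$, which is dominated by positive powers of $\delta$ relative to the leading part.

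The order bookkeeping is the part that needs care. I would convert to the rescaled variables $\vy$ and write each $\partial\phi/\partial x_i \cdot f_i^{(2)}$ in terms of $\delta^{b_i+b_i'+o\mu}$ times a nonvanishing prefactor, exactly as in the monomial proof, and identify $\mu_q$, the order of $\dot q / q$, as a minimum of $b_i + b_i' + o\mu$ taken over the appropriate index set. The key structural input is the irreducibility of $\phi$: just as in Lemma~\ref{lem:1} and in Theorem~\ref{th:monomialslow}, irreducibility forces a connectivity (``shared monomial'') argument on the partial derivatives $\partial\phi/\partial x_i \cdot f_i^{(1)}$, which must cancel pairwise. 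This propagates a single common order across all $i\in I$, yielding $b_i = b_{i'}$ for $i,i'\in I$ and hence the equal-timescale conclusion. Once the $b_i$ are all equal and each $b_i' > 0$, the inequality $b_i < \min_j\{b_j + b_j'\}$ is immediate, giving $\mu_i < \mu_q$, i.e.~$q$ is slower than every $x_i$ with $i\in I$.

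The genuinely new feature relative to the monomial case is the second assertion: if all the $b_i$ for $i\in I$ are equal, then the monomials of $\phi$ all have the same order $\langle\vect{d},\boldsymbol{m}_j\rangle$. This is the analogue of the ``same concentration order'' statement of Theorem~\ref{th:slowlinear1}, but now it concerns the valuations of the monomials rather than of single variables. I expect this to be the main obstacle, because a polynomial conservation law mixes several monomials in each equation, and one must argue that the cancellation in $\sum_i \DP{\phi}{x_i} f_i^{(1)} = 0$ can only occur between monomials of equal $\delta$-order. The plan is to track, for each monomial $\boldsymbol{x}^{\boldsymbol{m}_j}$, the order $\langle\vect{d},\boldsymbol{m}_j\rangle$ of the corresponding term in $\phi$ and show that the differentiated term $\partial(\boldsymbol{x}^{\boldsymbol{m}_j})/\partial x_i \cdot f_i^{(1)}$ carries order $\langle\vect{d},\boldsymbol{m}_j\rangle - d_i + (\text{order of } f_i^{(1)})$; since the order of $f_i^{(1)}$ equals $d_i + \mu_i$ by Remark~\ref{rem:f1eqbarf1} and the definition of the truncated system, and since all $\mu_i$ agree on $I$, two monomials sharing a differentiated monomial must have equal $\langle\vect{d},\boldsymbol{m}_j\rangle$. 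The irreducibility-driven connectivity then spreads equality of $\langle\vect{d},\boldsymbol{m}_j\rangle$ across all monomials linked through a common $i\in I$, completing the second claim.
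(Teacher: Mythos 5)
There is a genuine gap: you transplant the monomial-case conclusion that irreducibility forces $b_i = b_{i'}$ for all $i,\, i' \in I$, and you build the slowness claim on top of it. For polynomial conservation laws this intermediate claim is false. The quantities that must cancel pairwise in $\sum_i \DP{\phi}{x_i} f_i^{(1)} = 0$ are not terms of order $b_i + o\mu$ as in the monomial case, but the terms
\[
E_{i,j}(\vk,\vx) = \frac{f_i^{(1)}(\vk,\vx)}{x_i}\, m_{ij}\, c_j\, \vx^{\vect{m}_j},
\]
whose orders are the \emph{mixed} quantities $b_i + o\mu + o\langle\vect{d},\vect{m}_j\rangle$. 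The connectivity forced by irreducibility therefore propagates only the constancy of $b_i + o\langle\vect{d},\vect{m}_j\rangle$ across all pairs $(i,j)$ with $m_{ij}\neq 0$: variables occurring in the \emph{same} monomial must share a timescale, but variables in different monomials may have different $b_i$, compensated by different monomial orders. For instance, $\phi = x_1 + x_2 x_3$ forces $\mu_1 + o d_1 = \mu_2 + o(d_2+d_3) = \mu_3 + o(d_2+d_3)$, hence $\mu_2=\mu_3$ but not $\mu_1=\mu_2$. This is exactly why the theorem states equal timescales only as a \emph{hypothesis} of its second assertion, and why Theorem~\ref{th:structurepolynomial} explicitly allows $\mu_1 < \cdots < \mu_l$ in a single irreducible polynomial conservation law; your version would make the second assertion unconditional, contradicting that structure theorem.

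A downstream consequence is that your formula for $\mu_q$ (implicitly a minimum of $b_i + b'_i + o\mu$) drops the monomial weights, which do not cancel between $\dot q$ and $q$ when the $\langle\vect{d},\vect{m}_j\rangle$ differ. The correct bookkeeping, as in the paper, is
\[
\mu_q = \min\{\, b_i + b'_i + o\mu + o\langle\vect{d},\vect{m}_j\rangle \mid m_{ij}\neq 0 \,\} - \min\{\, o\langle\vect{d},\vect{m}_j\rangle \mid 1\leq j \leq r \,\},
\]
after which, writing $\nu$ for the common order of the $E_{i,j}$ and using $b'_i>0$, one gets $b_i + o\mu = \nu - o\langle\vect{d},\vect{m}_j\rangle < \mu_q$ for every $i\in I$ --- an argument that needs no equal-timescale assumption and thus proves the first assertion in full generality. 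Your plan for the second assertion (equal combined orders plus the hypothesis $b_i = b_{i'}$ yields equal $\langle\vect{d},\vect{m}_j\rangle$, spread by the shared-monomial connectivity) coincides with the paper's argument and is sound; it just must be stated as conditional on that hypothesis rather than flowing from the erroneous unconditional equal-$b_i$ claim, and your appeal to Lemma~\ref{lem:1} and Theorem~\ref{th:monomialslow} should be replaced by the $E_{i,j}$-level analysis above.
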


\begin{proof}
We note that $$\dot q = \sum_{j=1}^r c_j x^{\vect{m}_j} ( \sum_{i=1}^r m_{ij} \frac{\dot x_i}{x_i}) = 
\sum_{i=1}^n \frac{\dot x_i}{x_i} (\sum_{m_{i,j}\neq 0}    m_{ij}    c_j \vx^{\vect{m}_j}  ) .$$ 
Let us define the sums of rational monomials 
$$E_{i,j}(\vk,\vx) = \frac{f_i^{(1)}(\vk,\vx) }{x_i}     m_{ij}    c_j
 \vx^{\vect{m}_j}.$$
Then the expressions $E_{i,j}(\vk,\vx)$ have the orders 
$$b_i + o \mu + o\langle\vect{d},\vect{m}_j\rangle.$$
As $\phi(\vx)$ is conserved by \eqref{truncatedx} it follows that
\begin{equation}\label{eq:sumEij}
  \sum_{1\leq i\leq n,\,1\leq j\leq r,\,m_{i,j}\neq 0} E_{i,j}(\vk,\vx) 
=0 
\end{equation}
for all $\vk$, $\vx$. This is only possible if for any pair $(i, j)$ with $m_{i,j}\neq 0$, there is a pair $(i', j')$ with
 $m_{i',j'}\neq 0$ such that
$E_{i,j}$ and $E_{i',j'}$ share a common monomial. 
Since $\phi(\vx)$ is \cor{simple}, for all pairs $(i,j)$ and $(i',j')$ with $m_{i,j}\neq 0$ and $m_{i',j'}\neq 0$,
either $E_{i,j}$ and $E_{i',j'}$ share a common monomial or there is a sequence of pairs $$(i,j)=(i_0,j_0),\, (i_1,j_1),\, \ldots,\, (i_k,j_k)=(i',j')$$ 
such that 
$E_{i_l,j_l}$ and $E_{i_{l+1},j_{l+1}}$ share a common monomial for $0 \leq l \leq k-1$.
Thus, the expressions $E_{i,j}$
have the same order for all pairs $(i, j)$ with $m_{i,j}\neq 0$, i.e.~we have that 
$$b_i + o \mu + o\langle\vect{d},\vect{m}_j\rangle = b_{i'} + o \mu + o\langle\vect{d},\vect{m}_{j'}\rangle$$
for all pairs $(i,j)$ and $(i',j')$ with $m_{i,j}\neq 0$ and $m_{i',j'}\neq 0$.
In particular, if all variables $x_i$ have the same timescale order $b_i$
for $i\in I$, it follows that the scalar products $\langle \vect{d},\vect{m}_j \rangle$ are equal 
for all $1\leq j < r$ with $m_{i,j}\neq 0$ for some $i \in I$. This proves the second part of the theorem.

As $q$ is conserved by system \eqref{truncatedx}, it follows that
$$\dot q = \sum_{i,j,m_{i,j}\neq 0} \delta^{b_i + b'_i +o \mu}\frac{\bar f_i^{(2)}(\vkbar,\vy,\delta) }{y_i}     m_{ij}    c_j
\delta^{o\langle\vect{d},\vect{m}_j\rangle} \vy^{\vect{m}_j}.$$
The timescale order of $q$ is 
$$\mu_q = \min \{ b_i + b'_i + o \mu + o\langle\vect{d},\vect{m}_j \rangle \mid m_{ij} \neq 0 \} - \min \{ o\langle\vect{d},\vect{m}_j\rangle \mid 1\leq j\leq r \}.$$
Let $i\in I$ and $j$ such that $m_{i,j}\neq 0$.
Using $$b_i + o \mu + o\langle\vect{d},\vect{m}_j \rangle= \min \{ b_i + o \mu + o<\vect{d},\vect{m}_j\rangle \mid m_{i,j} \neq 0 \}$$
and $b'_i > 0$, we obtain 
$$b_i + o \mu = \min \{ b_i + o \mu + o\langle\vect{d},\vect{m}_j\rangle \mid m_{i,j} \neq 0 \} - o\langle\vect{d},\vect{m}_j\rangle < \mu_q,$$
meaning that $q$ is slower than all  $x_i$ with $i\in I$.

\end{proof}
 
As in the case of linear conservation laws, i.e.~Theorem~\ref{th:structurelinear},
we have a structure theorem for approximate polynomial conservation laws.
\begin{theorem}\label{th:structurepolynomial}
If $\phi(\vx_1,\ldots,\vx_l)$ is  \cor{a simple} polynomial approximate conservation law depending on variables having timescale orders smaller than or equal
to $\mu_l$, then 
\begin{equation}\label{eq:poldec1}
\phi(\vx_1,\ldots,\vx_l) = \sum_{k=1}^l \delta^{d_q + \mu_l - \mu_k} \langle \vc_k, \vect{\varphi}(\vz_k)\rangle,
\end{equation}
where $\mu_1 < \mu_2 < \ldots < \mu_l$, 
$\vc_k \in \RR^{n_k}$,
$\vect{\varphi}(\vz_k) \in \RR^{n_k}[\vz_k]$  and $d_q$ is the 
order of the variable $q= \phi(\vx_1,\ldots,\vx_l)$. Thus, 
$$\phi(\vx_1,\ldots,\vx_l) = \phi^{(1)}(\vx_l) + \phi^{(2)}(\vx_1,\ldots,\vx_{l-1}),$$ 
where
$$\phi^{(1)}(\vx_l) = \delta^{d_q } \langle \vc_l, \vect{\varphi}_l(\vz_l) \rangle \quad \mathrm{and} \quad \phi^{(2)}(\vx_1,\ldots,\vx_{l-1}) = \ord{\delta^{d_q }}.$$
$\phi^{(1)}(\vx_l)$ contains the dominant lowest order terms of $\phi$.
In other words, variables of the same timescale orders contribute to
monomials of the same order in the conservation law; the dominant monomials in the
conservation law correspond to the slowest variables. 
\end{theorem}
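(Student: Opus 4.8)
The plan is to follow the same strategy as in the proof of the structure theorem for linear conservation laws (Theorem~\ref{th:structurelinear}), upgrading it from the monomials $c_i x_i$ to general monomials $c_j \vx^{\vect{m}_j}$. The starting point is the order-equality relation already established inside the proof of Theorem~\ref{th:polyslow}: because $\phi$ is conserved by the truncated system and is irreducible, the expressions $E_{i,j}$ are chained together through shared monomials, which forces
\[
\mu_i + o\langle \vect{d},\vect{m}_j\rangle = \mu_{i'} + o\langle\vect{d},\vect{m}_{j'}\rangle
\]
for every pair of indices $(i,j)$ and $(i',j')$ with $m_{i,j}\neq 0$ and $m_{i',j'}\neq 0$, where $\mu_i = b_i + o\mu$ is the timescale order of $x_i$. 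Thus all these quantities equal a single constant, say $C$. The whole theorem will be read off from this identity, and since the chaining argument is already carried out for Theorem~\ref{th:polyslow}, I would invoke it rather than repeat it.

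First I would specialize the identity to the case $j=j'$: if two variables $x_i$ and $x_{i'}$ both occur in the same monomial $\vx^{\vect{m}_j}$, then $\mu_i = \mu_{i'}$. Hence every monomial of $\phi$ involves only variables of one single timescale order, so the monomials partition cleanly according to the groups $\vx_1,\ldots,\vx_l$. For each $k$ I would collect the monomials whose variables lie in group $k$; after substituting $x_i = y_i\delta^{od_i}$, each such monomial becomes $\vx^{\vect{m}_j} = \delta^{o\langle\vect{d},\vect{m}_j\rangle}\vy^{\vect{m}_j}$, a power of $\delta$ times a monomial in the rescaled group variables $\vz_k$ alone. Gathering coefficients, the group-$k$ part of $\phi$ is $\langle \vc_k, \vect{\varphi}(\vz_k)\rangle$ up to a common power of $\delta$, which is precisely what defines the vectors $\vc_k$ and $\vect{\varphi}(\vz_k)$.

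Next I would pin down that common power of $\delta$. For a monomial $\vx^{\vect{m}_j}$ in group $k$, every occurring variable has $\mu_i = \mu_k$, so the identity gives $o\langle\vect{d},\vect{m}_j\rangle = C - \mu_k$; in particular all monomials of a given group share the order $C - \mu_k$. Since the order $d_q$ of $q = \phi$ is the minimum of the monomial orders, and $C - \mu_k$ is smallest for the largest $\mu_k = \mu_l$, I obtain $d_q = C - \mu_l$, i.e.~$C = d_q + \mu_l$. Substituting $o\langle\vect{d},\vect{m}_j\rangle = d_q + \mu_l - \mu_k$ into the grouped expression yields exactly \eqref{eq:poldec1}. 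The dominant/subdominant splitting then follows at once: the $k=l$ term has prefactor $\delta^{d_q}$ and gives $\phi^{(1)}(\vx_l)$, while for $k<l$ the inequality $\mu_k < \mu_l$ makes the exponent $d_q + \mu_l - \mu_k$ strictly larger than $d_q$, so $\phi^{(2)}(\vx_1,\ldots,\vx_{l-1}) = \ord{\delta^{d_q}}$.

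The step I expect to carry the real weight is the specialization $j=j'$ showing that all variables in a single monomial share a timescale order: this is what makes the partition of monomials by timescale group well defined and guarantees that each $\vect{\varphi}(\vz_k)$ depends on $\vz_k$ only. Everything after that is bookkeeping with the orders, directly parallel to the linear case, so the only genuinely new ingredient compared with Theorem~\ref{th:structurelinear} is this single-monomial constraint extracted from the order-equality relation.
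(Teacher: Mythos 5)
Your proposal is correct and follows essentially the same route as the paper's proof: both start from the order-equality relation $\mu_i + o\langle \vect{d},\vect{m}_j\rangle = \mathrm{const}$ obtained via the irreducibility chaining argument in the proof of Theorem~\ref{th:polyslow}, specialize it to deduce that all variables within one monomial share a timescale order, regroup monomials by timescale group, and identify the constant as $d_q+\mu_l$ from the minimality of $d_q$. If anything, you make explicit two steps the paper's terse proof leaves implicit (the $j=j'$ specialization and the determination of the common constant $C=d_q+\mu_l$), which is a faithful expansion rather than a different argument.
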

\begin{proof}
It follows from \eqref{eq:sumEij} that 
\begin{equation}\label{eq:orderspol}
    \mu_i + o \langle \vect{d},\vect{m}_j \rangle =   \mu_l + d_q 
\end{equation}
for all pairs $(i,j)$ with  $c_j \neq 0$ and $m_{i,j}\neq 0$,
where $d_q = \min \left\{ o \langle \vect{d},\vect{m}_j \rangle \mid c_j \neq 0\right\}$.
From \eqref{eq:orderspol} we obtain that the variables $x_i$, $x_{i'}$ appearing in the same monomial $c_j \vx^{\vect{m}_j}$ of $\phi$ must
have the same timescale orders, i.e.~we have $\mu_i = \mu_{i'}$.
By regrouping the coefficients $c_i$ into vectors $\vc_k$ corresponding to variables of the same timescales and using again \eqref{eq:orderspol}, we
obtain \eqref{eq:poldec1}.
\end{proof}

\subsection{The Model Reduction Algorithms}
\label{sec:reduction}
To wrap up all the above developed concepts,  
 we propose in this section several model 
 reduction algorithms. These take into
 account approximate conservation laws and
  are applicable to CRN models with multiple
timescales and polynomial rate functions.
We consider two types of reductions:
\begin{enumerate}
\item[(i)]  Reduction at the slowest timescale.
\item[(ii)]  Nested reductions at 
intermediate timescales. 
\end{enumerate}

In case (i) all the variables except the slowest one are eliminated during the reduction
procedure. The reduced model is an ODE
for the slowest variable. 
The elimination of fast variables proceeds
hierarchically, the fastest variables being eliminated first.  

In case (ii) all fast variables up to
the $(l-1)$-th fastest one satisfy
polynomial quasi-steady state equations and can be
eliminated.
The remaining variables 
satisfy a reduced system of ODEs. 
The reduced dynamics takes place on the  
 normally hyperbolic invariant manifold
that is close to the critical manifold defined 
 by the quasi-steady state equations. 
 Changing $l$ from $l=1$ to $l=m$ one obtains
$m$ nested attractive normally hyperbolic 
invariant manifolds along which the reduced
dynamics evolves at successively slower timescales. 
Of course (i) follows from (ii) with $l=m$.

For both types of reduction (i) and (ii) 
the elimination of the fast variables is possible 
only if the
truncated system at the $k$-th timescale (defined by the vector fields $\vfbar_1^{(1)},\ldots,\vfbar_k^{(1)}$)  
has non-degenerate steady states (with $1\leq k \leq m$ in case 
(i) and $1\leq k \leq l$ in case (ii)).
When there are approximate conservation
laws which are conserved by the truncated system,
the non-degeneracy condition is not fulfilled and 
the standard reduction algorithm 
proposed in \cite{kruff2021algorithmic}
does not apply. 
Our solution to this problem is to 
add approximate conservation laws to the
set of variables, eliminate some of the fast variables and 
obtain a modified system that has no approximate conservation laws and 
satisfies the hyperbolicity  
condition.

\subsubsection{The Slowest Timescale Reduction}

As in \cite{kruff2021algorithmic} 
we introduce the small parameters $\delta_{l-1} = \delta^{b_{l}-b_{l-1}}$, for $2\leq l \leq m$
and the vector $\vect{\bar \delta} =(\delta_1,\ldots,\delta_{m-1})$.
Let us change the time variable to  $\tau'=\tau \delta_1 \delta_2 \ldots \delta_{m-1}$, the slowest timescale of the model. Then system \eqref{final} becomes
\begin{eqnarray}  \label{slowest}
\delta_1 \delta_2 \ldots \delta_{m-1} \vz'_1 &=&    \vfbar^{(1)}_1(\vkbar,\vz) +   \vgbar_1(\vkbar,\vz,\vect{\bar \delta}), \notag\\
&\vdots&  \notag\\
\delta_{m-1} \vz'_{m-1} &=&   \vfbar^{(1)}_{m-1}(\vkbar,\vz) +  
\vgbar_{m-1}(\vkbar,\vz,\vect{\bar \delta}), \notag \\
\vz'_m &=&    \vfbar^{(1)}_m(\vkbar,\vz) +  \vgbar_m(\vkbar,\vz,\vect{\bar \delta}),
\end{eqnarray}
where  $\vgbar_k(\vkbar,\vz,\vect{\bar \delta})= \delta^{b'_k}  \vfbar^{(2)}_k(\vkbar,\vz,\vect{\bar \delta})$
satisfy $\vgbar_k(\vkbar,\vz,0)=0$ for $1 \leq k \leq m$.
We assume that the functions $\vgbar_k$ are smooth in all their arguments. The smoothness
in $\vect{\bar \delta}$ can be tested algorithmically with methods introduced in 
\cite{kruff2021algorithmic}.


By setting $\vect{\bar \delta} = \vect{0}$ in system \eqref{slowest} we obtain  the {\bf slowest
timescale reduced system}
\begin{eqnarray}  \label{slowesttruncated}
0 &=&   \vfbar^{(1)}_1(\vkbar,\vz) , \notag\\
&\vdots&  \notag\\
0 &=&   \vfbar^{(1)}_{m-1}(\vkbar,\vz), \notag \\
\vz'_m &=&    \vfbar^{(1)}_m(\vkbar,\vz) .
\end{eqnarray}
For $\vk \in \RRpp^r$ and 
$\vz_m \in \RRpp^{n_m}$ 
a state $(\vz_1,\ldots,\vz_{m-1})$ satisfying the system of equations
\begin{equation}\label{eq:qss}
\vfbar_1^{(1)}(\vkbar,\vz_1,\ldots,\vz_{m-1},\vz_{m})=0,\, \ldots, \, \vfbar_{m-1}^{(1)}(\vkbar,\vz_1,\ldots,\vz_{m-1},\vz_{m})=0
\end{equation}
is called a {\bf quasi-steady state}. System \eqref{eq:qss} is called the
{\bf quasi-steady state condition}.  

Assume that system \eqref{eq:qss}
can be solved for 
$(\vz_1,\ldots,\vz_{m-1})$
in the following  hierarchical way.
First, there is a differentiable function 
$ \vftilde_1(\vkbar,\vz_2,\ldots,\vz_m)$ such that
$$\vfbar_1^{(1)}(\vkbar,\vftilde_1(\vkbar,\vz_2,\ldots,\vz_m),\vz_2,\ldots,\vz_m)=0.$$ 
Next,
consider that there is a differentiable function 
$ \vftilde_2(\vkbar,\vz_3,\ldots,\vz_m)$  such that
$$\vfbar_2^{(1)}(\vkbar,
\vftilde_1(\vkbar,
\vftilde_2(\vkbar,\vz_3,\ldots,\vz_m),\ldots,\vz_m),
\vftilde_2(\vkbar,\vz_3,\ldots,\vz_m),\ldots,\vz_m)=0.$$
Assuming that the procedure can go on, consider finally
that there is a function 
$\vftilde_{m-1}(\vkbar,\vz_m)$
such that 
$$\vfbar_{m-1}^{(1)}(\vkbar,\vz_1,\vz_2,\ldots,\vz_{m-1},\vz_m)=0,$$
where $\vz_1$, $\vz_2$, $\ldots$, $\vz_{m-1}$ are recursively replaced
by $\vftilde_1(\vkbar,\vz_2,\ldots,\vz_m)$, 
$\vftilde_2(\vkbar,\vz_3,\ldots,\vz_m)$, $\ldots$, 
$\vftilde_{m-1}(\vkbar,\vz_m)$, respectively. 

Consider the reduced system
\begin{equation}\label{slowestreduced}
\vz'_m =   \vf^*_m (\vkbar,\vz_m) ,
\end{equation}
where  $\vf^*_m (\vkbar,\vz_m)$ is obtained 
from
$\vfbar_m^{(1)} (\vkbar,\vz_1,\ldots,\vz_m)$
by substituting
$\vz_1$, $\vz_2$, $\ldots$, $\vz_{m-1}$ 
as above. 
 
Solutions of system \eqref{slowest} in the limit $\vect{\bar \delta} \to 0$ were studied by Tikhonov \cite{tikh}, 
Hoppensteadt \cite{hopp} and O'Malley \cite{o1971initial}. They showed that under appropriate conditions (roughly speaking the non-degeneracy and hyperbolicity of the 
quasi-steady states $\vz_{k-1}=\vftilde_{k-1}(\vkbar,\vz_k,\ldots,\vz_m)$ for $2\leq k \leq m$, see Section~\ref{sec:chains}), the solutions of the  
system \eqref{slowest} with initial conditions $\vz_i(0) = g_i(\delta)$, where the 
$g_i(\delta)$ are differentiable functions, converge  for $\vect{\bar \delta} \to 0$ to the solutions
of the system \eqref{slowestreduced} with initial conditions $\vz_m(0) = g_m(0)$.
By this reduction, called {\bf quasi-steady state approximation},
all variables faster than the slowest one are eliminated and the reduced model describes the dynamics
at the slowest timescale. 
This type of reduction is the most popular one in applications, for instance in 
physical chemistry (where it is known as the Semenov-Bodenstein quasi-steady state 
approximation \cite{bodenstein1913theorie,semenov1956some}) and in 
computational systems biology \cite{radulescu2012reduction}.


\subsubsection{Nested Intermediate Timescale Reductions} \label{sub:nested}
This type of reduction 
was proposed by Cardin and Teixera \cite{cardin2017fenichel} and algorithmically formalized by Kruff et al.~in
\cite{kruff2021algorithmic}. We call it nested, since  
the reduced dynamics at intermediate timescales are embedded
in  normally hyperbolic invariant manifolds that form a nested family 
(manifolds of slower variables are included in manifolds of faster
variables). 
In \cite{cardin2017fenichel} both manifolds were considered, the stable and unstable one. 
As in \cite{kruff2021algorithmic} we only consider here the stable case. The stable normally hyperbolic invariant manifolds attract and sequentially confine the dynamics 
of the system, with rates from fastest to slowest, and are used to obtain reductions valid at intermediate timescales.


We provide here the formal description of the reduction at an intermediate 
timescale of order 
$\delta^{b_l} =\delta^{b_1} \delta_1 \delta_2 \ldots \delta_{l-1}$, where  
$\delta_l,\, 1\leq l \leq m$ are defined as in the previous section.

Redefining  
the time into $\tau' = \tau \delta^{b_l}$, where $1 \leq l \leq  m$, leads to the system
\begin{eqnarray}  \label{multiscale}
\delta_1 \delta_2 \ldots \delta_{l-1} \vz'_1 &=&   ( \vfbar_1^{(1)}(\vkbar,\vz) +  \vgbar_1(\vkbar,\vz,\vect{\bar \delta})), \notag\\
&\vdots&  \notag\\
\delta_{l-1} \vz'_{l-1} &=&   ( \vfbar_{l-1}^{(1)}(\vkbar,\vz) +  \vgbar_{l-1}(\vkbar,\vz,\vect{\bar \delta})), \notag \\
\vz'_l &=&   ( \vfbar_l^{(1)}(\vkbar,\vz) + \vgbar_l(\vkbar,\vz,\vect{\bar \delta})), \notag \\
\vz'_{l+1} &=&  \delta_{l} ( \vfbar_{l+1}^{(1)}(\vkbar,\vz) +
\vgbar_{l+1}(\vkbar,\vz,\vect{\bar \delta})), \notag \\
&\vdots&  \notag\\
 \vz'_m &=&  \delta_{l}\delta_{l+1}\ldots  \delta_{m-1} ( \vfbar_m^{(1)}(\vkbar,\vz) + \vgbar_m(\vkbar,\vz,\vect{\bar \delta})), 
\end{eqnarray}
 where $\vect{\bar \delta} = (\delta_{1},\ldots,\delta_{m-1})$ and $\vgbar_k(\vkbar,\vz,\vect{\bar \delta})$ is a smooth functions with  
 $\vgbar_k(\vkbar,\vz,0)=0$ for all $1\leq k \leq m$.
 In the following we call the system
\begin{eqnarray}  \label{truncatedmultiscale}
0 &=&   \vfbar_1^{(1)}(\vkbar,\vz), \notag\\
&\vdots&  \notag\\
0 &=&    \vfbar_{l-1}^{(1)}(\vkbar,\vz), \notag 
\end{eqnarray}
\begin{eqnarray}
\vz'_l &=&    \vfbar_l^{(1)}(\vkbar,\vz), \notag \\
\vz'_{l+1} &=&  \delta_{l} ( \vfbar_{l+1}^{(1)}(\vkbar,\vz) +
\vgbar_{l+1}(\vkbar,\vz,\vect{\bar \delta})), \notag \\
&\vdots&  \notag\\
 \vz'_m &=&  \delta_{l}\delta_{l+1}\ldots  \delta_{m-1} ( \vfbar_m^{(1)}(\vkbar,\vz) + \vgbar_m(\vkbar,\vz,\vect{\bar \delta})), 
\end{eqnarray}
where $\vect{\bar \delta} = (0,\ldots,0,\delta_{l},\ldots,\delta_{m-1})$, which we obtained from \eqref{multiscale} by setting $\delta_1, \, \delta_2, \, \ldots , \, \delta_{l-1}$ to zero, the 
{\bf reduced system at the $l$-th fastest time or slower}.
In this case as well, the system of 
equations $\vfbar_1^{(1)}(\vkbar,\vz)=0, \, \ldots,\,\vfbar_{l-1}^{(1)}(\vkbar,\vz)=0$
corresponds to the {\bf quasi-steady state condition}.
In \cite{kruff2021algorithmic} we have also defined the simpler reduced system  
obtained by setting $\delta_1, \, \delta_2\, \ldots , \, \delta_{m-1}$ in \eqref{multiscale} to zero, namely
\begin{eqnarray}  \label{truncatedmulticalesimpler}
0 &=&   \vfbar_1^{(1)}(\vkbar,\vz), \notag\\
&\vdots&  \notag\\
0 &=&    \vfbar_{l-1}^{(1)}(\vkbar,\vz), \notag \\
\vz'_l &=&    \vfbar_l^{(1)}(\vkbar,\vz), \notag \\
\vz'_{l+1} &=&  0, \notag \\
&\vdots&  \notag\\
 \vz'_m &=&  0.
\end{eqnarray}
This reduced model emphasizes
three groups of variables: 
 slaved variables $\vz_1,\ldots,\vz_{l-1}$, which are faster than $\delta^{b_l}$,
 driving variables $\vz_l$, which have timescale $\delta^{b_l}$, 
and quenched variables $\vz_{l+1},\, \ldots, \, \vz_m$, which are slower than $\delta^{b_l}$.
If regularity and hyperbolicity conditions are satisfied (see \cite{cardin2017fenichel} and Section~\ref{sec:chains}), then the solutions 
of system \eqref{multiscale} converge to the solutions
of system \eqref{truncatedmultiscale} when $\epsilon_k \to 0$
for $ 1 \leq k \leq l-1$ (see the Corollary of Theorem A in 
\cite{cardin2017fenichel}).

The limit $(\delta_l,\ldots,\delta_{m-1}) \to 0$ leading from system \eqref{truncatedmultiscale}
to system \eqref{truncatedmulticalesimpler} can be treated in the simpler framework of 
regular perturbations. Using the same regularity and hyperbolicity conditions,  
one can show that there is a time $T>0$ such that 
the solutions of system \eqref{multiscale} converge for $\vect{\bar \delta} \to 0$ uniformly on any close
subinterval of $(0,T]$
to the solutions of system \eqref{truncatedmulticalesimpler} (cf.~Theorem 1 of \cite{kruff2021algorithmic}).
This result implies that 
the reduction \eqref{truncatedmulticalesimpler} is valid 
on a time interval $[t_1\delta^{-b_l},t_2\delta^{-b_l}]$ with $[t_1,t_2] \subset (0,T]$. The reduction $\eqref{truncatedmultiscale}$
has a broader validity including times longer than $\delta^{-b_l}$.

\subsubsection{Hyperbolically Attractive Chains and Quasi-steady State Conditions}\label{sec:chains}
The two types of reductions presented in the previous section are based on 
hierarchical elimination of fast variables, previously 
discussed in \cite{kruff2021algorithmic}. We revisit here this construction, using the
concept of the Schur complement.

Let us define 
$$\vZ_k = \begin{pmatrix} \vz_1 \\ \vdots \\ \vz_k \end{pmatrix} \quad \mathrm{and} \quad \vFbar_k^{(1)} (\vkbar,\vz)  = \begin{pmatrix} \vfbar^{(1)}_1(\vkbar,\vz) \\ \vdots \\ \vfbar^{(1)}_k(\vkbar,\vz) \end{pmatrix}.$$ 
For any $1 \leq k \leq m $, the system of equations $\vFbar_k^{(1)} (\vkbar,\vz) = 0$ defines  
the $k$-th quasi-steady variety.
The set of positive solutions $\vz$ of 
$\vFbar_k^{(1)} (\vkbar,\vz) = 0$ is denoted by ${\mathcal M}_k$ and represents the intersection
of the $k$-th quasi-steady state variety with the first orthant. 
For $1 \leq l \leq m $ we call the chain $\RRpp^n={\mathcal M}_0 \supseteq  {\mathcal M}_1 \supseteq \ldots \supseteq {\mathcal M}_l$ of nested quasi-steady state varieties
lying in the first orthant an {\bf $l$-chain}.

We solve the equations $\vFbar_k^{(1)} (\vkbar,\vz) = 0$ by successive elimination of variables,
starting
with $\vz_1$ and ending with $\vz_k$. During the elimination process,  
 intermediary functions  $\vf_k^* (\vkbar,\vz_k,$ $\ldots,\vz_m)$,
 $\vftilde_k(\vkbar,\vz_{k+1},\ldots,\vz_m)$ and 
  $\vFtilde_k(\vkbar,\vz_{k+1},\ldots,\vz_m)$
 are defined recursively. More precisely, let
 $$\vf_1^* (\vkbar,\vz_1,\ldots,\vz_m) =\vfbar_{1}^{(1)} (\vkbar,\vz_{1},\ldots,\vz_m)$$
 and let $\vz_1 = \vftilde_1(\vkbar,\vz_{2},\ldots,\vz_m)$ be the locally unique solution of $\vf_1^* (\vkbar,\vz_1,\ldots,\vz_m)=0$  
and set
$$\vFtilde_1(\vkbar,\vz_{2},\ldots,\vz_m)=
\vftilde_1(\vkbar,\vz_{2},\ldots,\vz_m).$$ Giving this initial data one continues then recursively for $2\leq k \leq m-1$ in the following way: Define 
\begin{equation}
 \label{eqfstar}
\vf_{k}^* (\vkbar,\vz_{k},\ldots,\vz_m)=\vfbar_{k}^{(1)} (\vkbar,\vFtilde_{k-1}(\vkbar,\vz_{k},\ldots,\vz_m),\vz_{k},\ldots,\vz_m),   
\end{equation}
determine the locally unique solution 
\begin{equation}
\vz_k = \vftilde_k(\vkbar,\vz_{k+1},\ldots,\vz_m) 
 \label{eqvftilde} 
\end{equation}
of $\vf_k^* (\vkbar,\vz_k,\ldots,\vz_m)=0$ and then set 
\begin{equation}
\vFtilde_k(\vkbar,\vz_{k+1},\ldots,\vz_m) = 
\begin{pmatrix} 
\vFtilde_{k-1}(\vkbar, \vftilde_k(\vkbar,\vz_{k+1},\ldots,\vz_m)  ,\vz_{k+1},\ldots,\vz_m) \\
\vftilde_k(\vkbar,\vz_{k+1},\ldots,\vz_m)
\end{pmatrix} .  
\end{equation}
From the recursion the following proposition follows immediately.
\begin{proposition} \label{propvFbar}
The vector 
$\vz=(\vFtilde_k(\vkbar,\vz_{k+1},\ldots,\vz_m),\vz_{k+1},\ldots,\vz_m)$ 
is a 
solution of $\vFbar_k^{(1)} (\vkbar,\vz)=0$.
\end{proposition}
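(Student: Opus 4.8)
The plan is to prove the statement by induction on $k$, with the crucial technical point being that the inductive hypothesis must be formulated as an identity valid for \emph{all} values of the tail variables $\vz_{k+1},\ldots,\vz_m$, rather than at a single fixed point. First I would state the claim to be inducted on: for every $1\le k\le m-1$ and all $\vkbar,\vz_{k+1},\ldots,\vz_m$, the stacked vector $(\vFtilde_k(\vkbar,\vz_{k+1},\ldots,\vz_m),\vz_{k+1},\ldots,\vz_m)$ solves $\vFbar_k^{(1)}(\vkbar,\cdot)=0$. The existence and local uniqueness of the roots $\vftilde_k$ are taken as given (they come from the implicit function theorem under the hyperbolicity hypotheses discussed in Section~\ref{sec:chains}), so what remains is purely the algebra of the recursion.

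For the base case $k=1$, by construction $\vFtilde_1=\vftilde_1$ is the locally unique root of $\vf_1^*=\vfbar_1^{(1)}=0$, hence $\vfbar_1^{(1)}(\vkbar,\vftilde_1(\vkbar,\vz_2,\ldots,\vz_m),\vz_2,\ldots,\vz_m)=0$, which is exactly the claim for $k=1$.

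For the inductive step I would split the system $\vFbar_k^{(1)}=0$ into its first $k-1$ rows, namely $\vFbar_{k-1}^{(1)}=0$, and its last row $\vfbar_k^{(1)}=0$, and treat them separately. For the first $k-1$ rows, I would use the recursive definition of $\vFtilde_k$: its top block is $\vFtilde_{k-1}(\vkbar,\vftilde_k(\vkbar,\vz_{k+1},\ldots,\vz_m),\vz_{k+1},\ldots,\vz_m)$ and its next block is $\vftilde_k(\vkbar,\vz_{k+1},\ldots,\vz_m)$. Consequently the evaluation point $(\vFtilde_k,\vz_{k+1},\ldots,\vz_m)$ is exactly what one obtains from the inductive-hypothesis point $(\vFtilde_{k-1}(\vkbar,\vz_k,\ldots,\vz_m),\vz_k,\ldots,\vz_m)$ upon the specialization $\vz_k:=\vftilde_k(\vkbar,\vz_{k+1},\ldots,\vz_m)$. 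Since the inductive hypothesis holds identically in $\vz_k$, this specialization is legitimate and yields $\vFbar_{k-1}^{(1)}=0$ at our point. For the last row, I would invoke the definition $\vf_k^*(\vkbar,\vz_k,\ldots,\vz_m)=\vfbar_k^{(1)}(\vkbar,\vFtilde_{k-1}(\vkbar,\vz_k,\ldots,\vz_m),\vz_k,\ldots,\vz_m)$ together with the fact that $\vftilde_k$ is the root of $\vf_k^*(\vkbar,\cdot)=0$; substituting $\vz_k=\vftilde_k(\vkbar,\vz_{k+1},\ldots,\vz_m)$ shows that $\vfbar_k^{(1)}$, evaluated precisely at $(\vFtilde_k,\vz_{k+1},\ldots,\vz_m)$, vanishes. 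Together the two parts give $\vFbar_k^{(1)}=0$, closing the induction.

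The step I expect to require the most care — rather than being genuinely hard — is the bookkeeping of the block structure of $\vFtilde_k$ and, relatedly, keeping the inductive hypothesis universal in the freed-up variable $\vz_k$. Because $\vFtilde_{k-1}$ and $\vf_k^*$ were built as honest functions of $(\vz_k,\ldots,\vz_m)$ and the vanishing statement was proved as an identity in those arguments, substituting $\vz_k:=\vftilde_k(\vkbar,\vz_{k+1},\ldots,\vz_m)$ is unproblematic; no further analytic input is needed beyond the already-assumed existence of the $\vftilde_k$.
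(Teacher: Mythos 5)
Your proof is correct and is precisely the induction on $k$ that the paper leaves implicit when it states that the proposition ``follows immediately from the recursion'': the base case is the defining property of $\vftilde_1$, the top $k-1$ blocks follow from the inductive hypothesis specialized at $\vz_k=\vftilde_k(\vkbar,\vz_{k+1},\ldots,\vz_m)$, and the last block is the defining equation $\vf_k^*(\vkbar,\vftilde_k,\vz_{k+1},\ldots,\vz_m)=0$. Your care in formulating the hypothesis as an identity in the tail variables (so the specialization of $\vz_k$ is legitimate, at least on the local domains where the implicit functions exist) is exactly the right bookkeeping, and nothing beyond it is needed.
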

The existence of the implicit functions 
$\vftilde_k$ and $\vFtilde_k$ needs the following non-degeneracy condition. 
\begin{condition} \label{cond:nondegeneracy}
The solution $\vz_{k}$ of the equation 
$\vf_{k}^* (\vkbar,\vz_{k},\ldots,\vz_m)=0$
is non-degenerate, i.e.~$\mathrm{det}(D_{\vz_k} \vf_k^*) \neq 0$. 
\end{condition}
This condition can be written more conveniently.
\begin{theorem}\label{thregularity}
For $1 \leq k \leq l$ the implicit functions $\vftilde_k(\vkbar,\vz_{k+1},\ldots,\vz_m)$ and $\vFtilde_k(\vkbar,\vz_{k+1},$ $\ldots,\vz_m)$ 
exist and are differentiable,
if and only if 
\begin{equation}\label{condreg}
\mathrm{det}(D_{\vZ_k} \vFbar_k^{(1)}) \neq 0 \text{ for all } \vz \in {\cal M}_k .
\end{equation}
\end{theorem}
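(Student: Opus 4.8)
The plan is to reduce the claim to a single determinant factorization and then read off the equivalence from the implicit function theorem applied at each elimination step. First I would record the step-wise equivalence that, given $\vFtilde_{k-1}$ already exists and is differentiable, the function $\vftilde_k$ solving $\vf_k^*(\vkbar,\vz_k,\ldots,\vz_m)=0$ in \eqref{eqvftilde} exists and is differentiable locally if and only if Condition~\ref{cond:nondegeneracy} holds, i.e. $\det(D_{\vz_k}\vf_k^*)\neq 0$. Since $\vFtilde_k$ is assembled from $\vftilde_1,\ldots,\vftilde_k$, existence and differentiability of all the $\vftilde_j$ and $\vFtilde_j$ for $j\le k$ is therefore equivalent to $\det(D_{\vz_j}\vf_j^*)\neq 0$ for every $1\le j\le k$.

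The core of the argument is the Schur-complement identity, which I would prove by induction on $k$: on $\mathcal{M}_k$ one has
\[
\det\bigl(D_{\vZ_k}\vFbar_k^{(1)}\bigr) = \prod_{j=1}^{k}\det\bigl(D_{\vz_j}\vf_j^*\bigr).
\]
The base case $k=1$ is immediate since $\vf_1^*=\vfbar_1^{(1)}$ and $\vZ_1=\vz_1$. For the inductive step I would split $\vZ_k=(\vZ_{k-1},\vz_k)$ and $\vFbar_k^{(1)}=(\vFbar_{k-1}^{(1)},\vfbar_k^{(1)})$ and view $D_{\vZ_k}\vFbar_k^{(1)}$ as a block matrix $\left(\begin{smallmatrix}A&B\\ C&D\end{smallmatrix}\right)$ with pivot $A=D_{\vZ_{k-1}}\vFbar_{k-1}^{(1)}$, invertible by the inductive hypothesis together with the non-vanishing of the lower factors. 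The key computation identifies the Schur complement of $A$ with $D_{\vz_k}\vf_k^*$: differentiating \eqref{eqfstar} by the chain rule gives
\[
D_{\vz_k}\vf_k^* = D_{\vz_k}\vfbar_k^{(1)} + D_{\vZ_{k-1}}\vfbar_k^{(1)}\cdot D_{\vz_k}\vFtilde_{k-1},
\]
while differentiating with respect to $\vz_k$ the identity $\vFbar_{k-1}^{(1)}(\vkbar,\vFtilde_{k-1}(\cdot),\vz_k,\ldots,\vz_m)=0$, which holds by Proposition~\ref{propvFbar}, yields $D_{\vz_k}\vFtilde_{k-1} = -A^{-1}\,D_{\vz_k}\vFbar_{k-1}^{(1)}$. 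Substituting turns $D_{\vz_k}\vf_k^*$ into $D_{\vz_k}\vfbar_k^{(1)} - D_{\vZ_{k-1}}\vfbar_k^{(1)}\,A^{-1}\,D_{\vz_k}\vFbar_{k-1}^{(1)}$, exactly the Schur complement of $A$. The block-determinant formula $\det\!\left(\begin{smallmatrix}A&B\\ C&D\end{smallmatrix}\right)=\det(A)\det(D-CA^{-1}B)$ then gives $\det(D_{\vZ_k}\vFbar_k^{(1)})=\det(A)\,\det(D_{\vz_k}\vf_k^*)$, closing the induction. Proposition~\ref{propvFbar} is precisely what guarantees that evaluating the full Jacobian at a point of $\mathcal{M}_k$ agrees with evaluating the step-wise Jacobians at the recursively substituted quasi-steady states.

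Combining the two ingredients finishes the proof: $\det(D_{\vZ_k}\vFbar_k^{(1)})\neq 0$ on $\mathcal{M}_k$ if and only if every factor $\det(D_{\vz_j}\vf_j^*)$ is non-zero there, which by the implicit function theorem is equivalent to the existence and differentiability of all $\vftilde_j$ and $\vFtilde_j$ for $j\le k$. The point requiring care, and which forces the inductive organization, is that the Schur factorization is only valid when the pivot $A$ is invertible; one cannot deduce non-vanishing of the leading blocks directly from non-vanishing of the full determinant, so the equivalence must be threaded level by level, using the lower-order factorization to certify invertibility of $A$ before extracting the $k$-th factor. This inductive threading is the main obstacle, but it is exactly what the recursive definitions \eqref{eqfstar}--\eqref{eqvftilde} are set up to support.
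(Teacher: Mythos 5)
Your proof is correct and takes essentially the same approach as the paper's: the chain-rule identification of $D_{\vz_k} \vf_k^*$ as the Schur complement $D_{\vZ_k} \vFbar_k^{(1)} / D_{\vZ_{k-1}} \vFbar_{k-1}^{(1)}$ (the paper's Lemma~\ref{lemmaschur}, proved exactly as you do via Proposition~\ref{propvFbar}) combined with the Schur determinant formula \eqref{schur}, with your telescoped product $\det(D_{\vZ_k} \vFbar_k^{(1)}) = \prod_{j=1}^{k} \det(D_{\vz_j} \vf_j^*)$ being just the iterated form of the paper's identity $\det(D_{\vz_k} \vf_k^*)\,\det(D_{\vZ_{k-1}} \vFbar_{k-1}^{(1)}) = \det(D_{\vZ_k} \vFbar_k^{(1)})$. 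Your explicit inductive threading of the pivot-invertibility requirement corresponds precisely to the hypothesis under which the paper states Lemma~\ref{lemmaschur}, so the two arguments coincide in substance.
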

In order to prove Theorem\ref{condreg} we need the Schur complement, which occurs naturally during the
Gaussian elimination of variables (see, for instance, \cite{zhang2006schur}). 
\begin{definition}
Let   $M = \begin{pmatrix} A & B \\ C & D \end{pmatrix}$ be a block matrix with 
$A$ invertible. The matrix $M/A = D - C A^{-1} B$ is called the Schur complement of the block $A$ of $M$. 
\end{definition}
The Schur complement can be obtained from the following successive computations:
\begin{itemize}
    \item Solve $A X + B Y = 0$ for $X$.
    \item Substitute $X$ in $C X + D Y$, 
    which leads to $(M/A) Y$.
\end{itemize}
Moreover, the Schur complement
has the follwoing two
simple properties \cite{zhang2006schur}:
\begin{eqnarray}
\mathrm{det}(M)  &=& \mathrm{det}(M/A) \, \mathrm{det}(A ) \quad \quad \text{(Schur formula), }  \label{schur} \\
\mathrm{rk}(M)  &=&   \mathrm{rk}(M/A) + \mathrm{rk}(A) \quad \quad \text{(Guttman rank additivity formula). } \label{guttman} 
\end{eqnarray}

Returning to our problem, we prove now the following lemma.
\begin{lemma} \label{lemmaschur}
The  matrix $D_{\vz_{k+1}} \vf_{k+1}^*$ is a Schur complement. More precisely, we have 
\begin{equation}\label{eqschur}
D_{\vz_{k+1}} \vf_{k+1}^* = D_{\vZ_{k+1}} \vFbar_{k+1}^{(1)} / D_{\vZ_k} \vFbar_k^{(1)},
\end{equation}
 for all $1 \leq k \leq m-1$ and $\vz \in {\cal M}_{k}$ such that $D_{\vZ_k} \vFbar_k^{(1)}$ is invertible.
\end{lemma}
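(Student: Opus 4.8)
The plan is to differentiate the composite defining $\vf_{k+1}^*$ by the chain rule and to observe that the implicit-function derivative appearing in it is exactly what converts the chain rule into the Schur complement $M/A$, where $M = D_{\vZ_{k+1}}\vFbar_{k+1}^{(1)}$ and $A = D_{\vZ_k}\vFbar_k^{(1)}$.

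First I would record the block structure of $M$ along the splitting $\vZ_{k+1} = (\vZ_k,\vz_{k+1})$. Since $\vFbar_{k+1}^{(1)}$ stacks $\vFbar_k^{(1)}$ over $\vfbar_{k+1}^{(1)}$, its Jacobian is
$$M = \begin{pmatrix} D_{\vZ_k}\vFbar_k^{(1)} & D_{\vz_{k+1}}\vFbar_k^{(1)} \\ D_{\vZ_k}\vfbar_{k+1}^{(1)} & D_{\vz_{k+1}}\vfbar_{k+1}^{(1)} \end{pmatrix} = \begin{pmatrix} A & B \\ C & D \end{pmatrix},$$
so that $M/A = D - CA^{-1}B$ is defined precisely where $A$ is invertible, that is, on the locus $\vz\in{\cal M}_k$ assumed in the statement.

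Next I would apply the chain rule to \eqref{eqfstar}, namely to $\vf_{k+1}^*(\vkbar,\vz_{k+1},\ldots,\vz_m) = \vfbar_{k+1}^{(1)}(\vkbar,\vFtilde_k(\vkbar,\vz_{k+1},\ldots,\vz_m),\vz_{k+1},\ldots,\vz_m)$, differentiating with respect to $\vz_{k+1}$ to obtain
$$D_{\vz_{k+1}}\vf_{k+1}^* = D_{\vZ_k}\vfbar_{k+1}^{(1)}\, D_{\vz_{k+1}}\vFtilde_k + D_{\vz_{k+1}}\vfbar_{k+1}^{(1)} = C\, D_{\vz_{k+1}}\vFtilde_k + D,$$
identifying the partial through the substituted slots $\vZ_k = \vFtilde_k$ with the block $C$ and the explicit partial with the block $D$. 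The crux is then the implicit-function evaluation of $D_{\vz_{k+1}}\vFtilde_k$: by Proposition~\ref{propvFbar} the map $\vFtilde_k$ satisfies the identity $\vFbar_k^{(1)}(\vkbar,\vFtilde_k,\vz_{k+1},\ldots,\vz_m)\equiv 0$, and differentiating it in $\vz_{k+1}$ gives $A\, D_{\vz_{k+1}}\vFtilde_k + B = 0$, hence $D_{\vz_{k+1}}\vFtilde_k = -A^{-1}B$ by invertibility of $A$. Substituting yields $D_{\vz_{k+1}}\vf_{k+1}^* = D - CA^{-1}B = M/A$, which is \eqref{eqschur}.

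I expect the only delicate point to be bookkeeping rather than analysis: keeping the total derivative of the composite $\vf_{k+1}^*$ distinct from the partial Jacobians that form the blocks of $M$, and evaluating everything at one and the same point of ${\cal M}_k$, so that the Proposition~\ref{propvFbar} identity and the invertibility of $A$ are simultaneously available. Once $D_{\vz_{k+1}}\vFtilde_k = -A^{-1}B$ is in hand, the identification with the Schur complement is immediate.
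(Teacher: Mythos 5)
Your proposal is correct and follows essentially the same route as the paper's proof: differentiate the composite defining $\vf_{k+1}^*$ by the chain rule, differentiate the identity $\vFbar_k^{(1)}(\vkbar,\vFtilde_k,\vz_{k+1},\ldots,\vz_m)=0$ from Proposition~\ref{propvFbar} to get $D_{\vz_{k+1}}\vFtilde_k = -(D_{\vZ_k}\vFbar_k^{(1)})^{-1}D_{\vz_{k+1}}\vFbar_k^{(1)}$, and substitute to recognize the Schur complement. Your explicit block bookkeeping $(A,B,C,D)$ is merely a cleaner presentation of the identical argument.
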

\begin{proof}
We differentiate \eqref{eqfstar} with respect to $\vz_{k+1}$ and obtain
$$ D_{\vz_{k+1}} \vf_{k+1}^*  =  (D_{\vZ_{k}} \vfbar_{k+1}^{(1)} ) D_{\vz_{k+1}} \vFtilde_{k} + D_{\vz_{k+1}} \vfbar_{k+1}^{(1)}.  $$
From Proposition~\ref{propvFbar} we have that 
$$\vFbar_k^{(1)} (\vkbar,\vFtilde_k(\vkbar,\vz_{k+1},\ldots,\vz_m),\vz_{k+1},\ldots,\vz_m)=0.$$
Differentiating the last equation with respect to $\vz_{k+1}$ we obtain
 $$(D_{\vZ_{k}} \vFbar_{k}^{(1)}) D_{\vz_{k+1}} \vFtilde_k + D_{\vz_{k+1}} \vFbar_{k}^{(1)} =0$$
 for $\vz \in {\cal M}_k$.
 Finally, we get
 $$ D_{\vz_{k+1}} \vf_{k+1}^* = - (D_{\vZ_{k}} \vfbar_{k+1}^{(1)} ) 
 (D_{\vZ_{k}} \vFbar_{k}^{(1)})^{-1}
 D_{\vz_{k+1}} \vFbar_{k}^{(1)}
 + D_{\vz_{k+1}} \vfbar_{k+1}^{(1)} =  D_{\vZ_{k+1}} \vFbar_{k+1}^{(1)} / D_{\vZ_k} \vFbar_k^{(1)}. $$
\end{proof}
Now we can prove Theorem~\ref{thregularity}. 
\begin{proof}[Proof of Theorem~\ref{thregularity} ]
Using Schur formula \eqref{schur} and Lemma~\ref{lemmaschur}, we obtain that
$$\mathrm{det}(D_{\vz_{k}} \vf_{k}^*) \, \mathrm{det}(D_{\vZ_{k-1}} \vFbar_{k-1}^{(1)}) = \mathrm{det}(D_{\vZ_{k}} \vFbar_{k}^{(1)})
$$
for $2\leq k \leq l$ and that 
$$\mathrm{det}(D_{\vz_{1}} \vf_{1}^*)  = \mathrm{det}(D_{\vZ_{1}} \vFbar_{1}^{(1)}).  $$
This implies that $\mathrm{det}(D_{\vz_{k}} \vf_{k}^*)\neq 0 $ for $1 \leq k \leq l$
is equivalent to $\mathrm{det}(D_{\vZ_{k}} \vFbar_{k}^{(1)}) \neq 0$ for $1 \leq k \leq l$.
\end{proof}


As discussed in \cite{kruff2021algorithmic} and \cite{o1971initial}, the validity 
of the quasi-steady state approximation depends also on the
 following hyperbolicity condition:
\begin{condition}[Hyperbolicity]\label{cond:hyp}
For all $1\leq k \leq m $ the solution $\vz_{k}=\vftilde_{k}(\vkbar,\vz_{k+1},\ldots,\vz_m)$ is a hyperbolically stable steady state
of the ODE $$\vz_k' = \vf^*_k  (\vkbar, \vz_k,\vz_{k+1},\ldots,\vz_m),$$ 
where $\vf^*_k$ is defined as in the subsection~\ref{sec:chains}.
Here, by hyperbolically stable we mean 
that all eigenvalues of the Jacobian matrix at the steady state
have strictly negative real parts. 
The validity of the nested reduction at the $l$-th fastest time or slower requires that Condition \ref{cond:hyp}
is fulfilled  for all $1\leq k  \leq l $.
\end{condition}
As discussed in \cite{kruff2021algorithmic}, an important concept for the geometric theory of singular perturbations
is the hyperbolically attractive chain.
\begin{definition}\label{def:chain}
An $l$-chain of nested quasi-steady state varieties ${\mathcal M}_0 \supseteq  {\mathcal M}_1 \supseteq \ldots \supseteq {\mathcal M}_l$ 
is called a {\bf hyperbolically attractive $l$-chain} if for all $1\leq k \leq l$ all eigenvalues of 
$D_{\vz_{k}} \vf_{k}^*$ have strictly negative real parts for 
$\vz \in {\mathcal M}_k$. In this case we write
${\mathcal M}_0 \rhd {\mathcal M}_1 \rhd \ldots \rhd {\mathcal M}_l$. 
\end{definition}


Summarizing,
the nested reduction  \eqref{truncatedmultiscale} is valid up to the $l$-th timescale if the $l$-chain ${\mathcal M}_0 \rhd {\mathcal M}_1 \rhd \ldots \rhd {\mathcal M}_l$ is hyperbolically attractive (see \cite{cardin2017fenichel,kruff2021algorithmic}). The slowest timescale quasi-steady state reduction \eqref{slowestreduced} is 
valid if the $m$-chain ${\mathcal M}_0 \rhd {\mathcal M}_1 \rhd \ldots \rhd {\mathcal M}_m$ is hyperbolically attractive.

From Lemma~\ref{lemmaschur} we obtain the following proposition. 
\begin{proposition}\label{prophyper}
An $l$-chain is hyperbolically attractive if and only if
$\mathrm{det}(D_{\vZ_{k}} \vFbar_{k}^{(1)}) \neq 0$ for all $1 \leq k \leq l$,
all eigenvalues of
$D_{\vz_{1}} \vfbar_{1}^{(1)}$ have negative real parts for all $\vz \in {\cal M}_1$
and for all $2\leq k \leq l$ all eigenvalues of
$D_{\vZ_{k}} \vFbar_{k}^{(1)} / D_{\vZ_{k-1}} \vFbar_{k-1}^{(1)}$ have negative 
real parts for all $\vz \in {\cal M}_{k-1}$.
\end{proposition}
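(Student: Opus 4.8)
The plan is to reduce the whole statement to the matrix identity of Lemma~\ref{lemmaschur} together with the Schur determinant formula \eqref{schur}, so that the three listed conditions become a termwise reformulation of Definition~\ref{def:chain}. The crucial observation is that the Jacobian governing hyperbolicity at level $k$, namely $D_{\vz_k}\vf_k^*$, can be expressed through the full truncated Jacobians: at the bottom level $\vf_1^* = \vfbar_1^{(1)}$, so $D_{\vz_1}\vf_1^* = D_{\vz_1}\vfbar_1^{(1)} = D_{\vZ_1}\vFbar_1^{(1)}$, while for $2\le k\le l$ Lemma~\ref{lemmaschur} identifies $D_{\vz_k}\vf_k^*$ with the Schur complement $D_{\vZ_k}\vFbar_k^{(1)}/D_{\vZ_{k-1}}\vFbar_{k-1}^{(1)}$.

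First I would dispose of the determinant conditions. By Definition~\ref{def:chain}, hyperbolic attractivity asserts that every $D_{\vz_k}\vf_k^*$ has all eigenvalues with strictly negative real part, hence no zero eigenvalue, hence is nonsingular. Substituting this into \eqref{schur}, written as $\mathrm{det}(D_{\vZ_k}\vFbar_k^{(1)}) = \mathrm{det}(D_{\vz_k}\vf_k^*)\,\mathrm{det}(D_{\vZ_{k-1}}\vFbar_{k-1}^{(1)})$, and inducting upward from $\mathrm{det}(D_{\vZ_1}\vFbar_1^{(1)}) = \mathrm{det}(D_{\vz_1}\vfbar_1^{(1)})\ne 0$, yields $\mathrm{det}(D_{\vZ_k}\vFbar_k^{(1)})\ne 0$ for all $1\le k\le l$. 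In the converse direction these same determinant hypotheses are exactly what licenses forming the Schur complements and invoking Lemma~\ref{lemmaschur} at each level, so they cannot be dropped.

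Next I would read off the eigenvalue conditions, which are immediate once the identity of the first paragraph is in hand: the spectrum of $D_{\vz_1}\vf_1^*$ is that of $D_{\vz_1}\vfbar_1^{(1)}$, and for $k\ge 2$ the spectrum of $D_{\vz_k}\vf_k^*$ is that of the Schur complement $D_{\vZ_k}\vFbar_k^{(1)}/D_{\vZ_{k-1}}\vFbar_{k-1}^{(1)}$. Thus the requirement that every $D_{\vz_k}\vf_k^*$ have its spectrum in the open left half-plane holds if and only if the corresponding spectral statements about $D_{\vz_1}\vfbar_1^{(1)}$ and about the Schur complements hold; the forward implication reuses that negative real parts force nonsingularity, and the backward implication uses the determinant conditions to guarantee that each Schur complement is well defined wherever it is evaluated.

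The one genuinely delicate step, and the one I expect to be the main obstacle, is the bookkeeping of evaluation loci. Lemma~\ref{lemmaschur} certifies the Schur identity only where $D_{\vZ_{k-1}}\vFbar_{k-1}^{(1)}$ is invertible, i.e.\ along $\mathcal{M}_{k-1}$, whereas Definition~\ref{def:chain} tests the spectrum of $D_{\vz_k}\vf_k^*$ along $\mathcal{M}_k$. Because the chain is nested, $\mathcal{M}_k \subseteq \mathcal{M}_{k-1}$, so the identity is available precisely at the points where the spectral condition must be checked, and the determinant conditions established first guarantee invertibility all along the relevant portion of the chain traversed by the induction. I would therefore state carefully, for each $k$, on which variety the corresponding spectral condition lives, so that the two implications match up. Assembling the determinant equivalence with the eigenvalue equivalence then gives the asserted characterization.
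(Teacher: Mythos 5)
Your proposal is correct and is essentially the paper's own argument: the paper derives Proposition~\ref{prophyper} in one line from Lemma~\ref{lemmaschur} (the identification $D_{\vz_k}\vf_k^* = D_{\vZ_k}\vFbar_k^{(1)}/D_{\vZ_{k-1}}\vFbar_{k-1}^{(1)}$, with $D_{\vz_1}\vf_1^* = D_{\vz_1}\vfbar_1^{(1)}$ at the bottom) combined with the Schur determinant formula~\eqref{schur}, exactly the reduction you carry out, and supplies no further detail. Your closing caution about evaluation loci is well placed---the proposition tests the Schur complement on ${\mathcal M}_{k-1}$ whereas Definition~\ref{def:chain} tests $D_{\vz_k}\vf_k^*$ only on ${\mathcal M}_k \subseteq {\mathcal M}_{k-1}$---but that mismatch is an imprecision in the statement itself, which the paper glosses over, not a defect of your argument.
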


\subsubsection{Approximate Conservation Laws and the Quasi-equilibrium Condition}

Linear approximate conservation laws were already proposed as
a tool for model reduction of CRNs when the so-called {\bf quasi-equilibrium} (QE) condition \cite{gorban2010asymptotology,radulescu2012reduction} is satisfied.  At QE the direct and reverse rates of fast reversible reactions compensate each other and the net rates of change of reactants and products are negligible. Products or reactants of fast reactions are fast species. However, although
concentrations of fast species are equilibrated, these variables can not be eliminated by using the quasi-steady state (QSS)
equations. In the case of QE linear combinations of concentrations
of fast species are conserved by the fast dynamics and the QSS equations have degenerate solutions indexed by the values of the conserved quantities \cite{gorban2010asymptotology,radulescu2012reduction}. 

In this paper we show that the degeneracy of solutions of
QSS equations is valid more generally, for any approximate conservation 
laws.


Let us denote by $\vx_l$ and $\vX_l=(\vx_1,\ldots,\vx_l)$ the set of variables
of system \eqref{eq:fi} having time-scales of order $\mu_l$ and timescales equal
to or faster than $\mu_l$, respectively. 
Furthermore, let  $\vf_l(\vk,\vx)$, $\vf_l^{(1)}(\vk,\vx)$, 
$$\vF_l(\vk,\vx) = \begin{pmatrix} \vf_1(\vk,\vx) \\ \vdots \\ \vf_l(\vk,\vx)  \end{pmatrix} \quad \mathrm{and} \quad 
\vF_l^{(1)}(\vk,\vx) = \begin{pmatrix} \vf_1^{(1)}(\vk,\vx) \\ \vdots \\ \vf_l^{(1)}(\vk,\vx)  \end{pmatrix} $$
be the full and truncated vector fields whose flows have timescales of order $\mu_l$, and   timescales equal
to or faster than $\mu_l$, respectively. In other words, these vector fields are the unscaled versions of the vector fields
 $\vfbar_l(\vkbar,\vz)$,  $\vfbar_l^{(1)}(\vkbar,\vz)$,
  $\vFbar_l(\vkbar,\vz)$ and  $\vFbar_l^{(1)}(\vkbar,\vz)$ introduced in Section~\ref{sec:formal_scaling}.

Let $\phi_l(\vX_l)$ be a linear, monomial or polynomial  approximate conservation law depending only on 
the variables $\vX_l$ satisfying $D_{\vX_l} \phi_l(\vX_l) \vF_l^{(1)} = 0$.
This approximate conservation law can eventually also be exact, in 
which case it also satisfies $D_{\vX_l} \phi_l(\vX_l) \vF_l = 0$.
The existence of such an approximate conservation law implies the failure of 
Condition~\eqref{condreg} in  Theorem~\ref{thregularity} as in the following proposition.
\begin{proposition}
\label{prop:quasieq}
Let us assume that there is an approximate conservation law $\phi_l(\vX_l)$, where $1\leq l \leq m$.
Then 
$\mathrm{det}(D_{\vZ_l} \vFbar^{(1)}_l )=0$
if $\vz \in {\cal M}_l$. 
\end{proposition}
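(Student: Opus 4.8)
The plan is to exhibit the gradient of $\phi_l$ as a left null vector of the Jacobian $D_{\vX_l}\vF_l^{(1)}$ on the variety $\mathcal{M}_l$, and then to transport this singularity to the scaled Jacobian $D_{\vZ_l}\vFbar^{(1)}_l$. By Definition~\ref{def:exactandapproxcons}\ref{def:exactandapproxconsb}, and because $\phi_l$ depends only on the variables $\vX_l$, the approximate conservation law condition is the polynomial identity
$$D_{\vX_l}\phi_l(\vX_l)\,\vF_l^{(1)}(\vk,\vx) = 0,$$
valid for all $\vk$ and all $\vx$. The crucial feature is that this is an identity, not merely a relation on $\mathcal{M}_l$, so it may be differentiated.

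First I would differentiate this identity with respect to $\vX_l$. Writing $\vF = \vF_l^{(1)}$ and using that the Hessian $D^2_{\vX_l}\phi_l$ is symmetric, the product rule gives, as an identity of row vectors,
$$\transpose{\vF}\,D^2_{\vX_l}\phi_l + D_{\vX_l}\phi_l\; D_{\vX_l}\vF_l^{(1)} = 0.$$
Next I would restrict to a point $\vz\in\mathcal{M}_l$, where $\vFbar^{(1)}_l = 0$ by definition and hence $\vF_l^{(1)} = 0$, since the two differ only by nonzero powers of $\delta$. The first term then drops, leaving $D_{\vX_l}\phi_l\; D_{\vX_l}\vF_l^{(1)} = 0$ on $\mathcal{M}_l$. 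Since the number of equations in the first $l$ blocks equals the number of fast variables $\vX_l$, the matrix $D_{\vX_l}\vF_l^{(1)}$ is square, so as soon as $D_{\vX_l}\phi_l$ is nonzero it is a nontrivial left null vector and $\mathrm{det}(D_{\vX_l}\vF_l^{(1)}) = 0$.

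I would then verify nonvanishing of the gradient and transfer the result to the scaled Jacobian. For a linear law $D_{\vX_l}\phi_l$ is the constant coefficient vector, nonzero because $\phi_l$ genuinely involves $\vX_l$; for a monomial law its components are $m_i\phi_l/x_i$, which are nonzero on $\RRpp^n$ because all concentrations are strictly positive (Remark~\ref{rem:rrpp}). For the transfer, from $x_i = y_i\delta^{od_i}$ and $f^{(1)}_i = \delta^{od_i+b_i+o\mu}\bar f^{(1)}_i$ the chain rule yields
$$D_{\vZ_l}\vFbar^{(1)}_l = \Lambda\,\big(D_{\vX_l}\vF_l^{(1)}\big)\,\Delta, \qquad \Lambda = \mathrm{diag}(\delta^{-od_i-b_i-o\mu}),\ \Delta = \mathrm{diag}(\delta^{od_j}),$$
with $\Lambda$ and $\Delta$ invertible, so the two determinants vanish simultaneously, which is precisely the failure of \eqref{condreg}.

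The main obstacle is the nonvanishing of $D_{\vX_l}\phi_l$ in the general polynomial case, where the gradient can vanish at isolated points of $\mathcal{M}_l$ and the pointwise argument breaks down. I expect to close this by continuity: the polynomial $\mathrm{det}(D_{\vZ_l}\vFbar^{(1)}_l)$ vanishes on the subset of $\mathcal{M}_l$ where $D_{\vX_l}\phi_l\neq 0$, and provided this subset is dense in $\mathcal{M}_l$ the determinant vanishes on all of $\mathcal{M}_l$; alternatively, at a point where the gradient degenerates one may invoke the positive local dimension of the solution set forced by the conservation law (Theorem~10 of \cite{desoeuvres2022complete}) to obtain degeneracy directly. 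A secondary bookkeeping step is to confirm carefully that the restriction of the conservation-law identity to differentiation in $\vX_l$ only (with the slower variables held fixed) is legitimate, which it is because the identity holds for all values of all variables.
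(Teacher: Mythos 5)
Correct, and essentially the paper's own argument: you differentiate the conservation-law identity, evaluate on $\mathcal{M}_l$ so the Hessian term drops, and exhibit the gradient of $\phi_l$ as a left null vector of the square Jacobian, with your diagonal-rescaling transfer $\Lambda\,(D_{\vX_l}\vF_l^{(1)})\,\Delta$ playing exactly the role of the paper's Remark~\ref{rem:f1eqbarf1} (the truncated fields are the same polynomials in scaled and unscaled variables). Your extra care about the possible vanishing of $D_{\vX_l}\phi_l$ in the polynomial case actually goes beyond the paper, whose proof simply asserts $\D{\phi_l}{\vZ_l}\neq 0$ at that step without justification.
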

\begin{proof}
The definition of approximate conservation laws yields
\begin{equation}\label{phil}
D_{\vX_l} \phi_l(\vX_l) \vF^{(1)}_l(\vk,\vx)=0.
\end{equation}
Since, up to the change of variables $\vkbar \leftarrow \vk$ and $\vz \leftarrow \vx$, the polynomials $\vF^{(1)}_l$ and $\vFbar^{(1)}_l$ are identical, 
 we find
\begin{equation}\label{consl}
D_{\vZ_l} \phi_l (\vZ_l) \vFbar^{(1)}_l(\vkbar,\vz)=0.
\end{equation}
Differentiating \eqref{consl} we obtain 
$$\DD{\phi_l}{\vZ_l}\vFbar^{(1)}_l  +  \D{ \phi_l}{\vZ_l}\D{\vFbar^{(1)}_l}{\vZ_l}=0,$$ where  
$\DD{ \phi_l}{\vZ_l}$ is the second derivative of $ \phi_l$
with respect to $\vZ_l$.
If $\vz \in {\cal M}_l$, then $\vFbar^{(1)}_l(\vkbar,\vz)=0$ and therefore
$$\D{ \phi_l}{\vZ_l}\D{\vFbar^{(1)}_l}{\vZ_l}=0.$$
Thus, the left kernel of the
matrix $\D{\vFbar^{(1)}_l}{\vZ_l}$ contains the covector $\D{ \phi_l}{\vZ_l} \neq 0$ and so  
$\mathrm{det}(D_{\vZ_l} \vFbar^{(1)}_l )=0$. 
\end{proof}

In this case the quasi-steady state condition $\vF_l^{(1)}(\vk,\vX_l,\vx_{l+1},\ldots,\vx_m)=0$  
can not be used to eliminate the fast variables $\vX_l$.
However, even in this case the CRN~\eqref{eq:fi} 
can be transformed to an equivalent CRN that fulfils the nondegeneracy 
Condition~\eqref{condreg}  for $1\leq k \leq l$. The details of the transformation are presented below.

Let $\vect{\Phi}_l(\vX_l) = (\phi_{1l}(\vX_l),\ldots,\phi_{s_ll}(\vX_l))^T$ be a set of $s_l$ approximate conservation laws 
dependent on $\vX_l$ and consider the equation
\begin{equation}\label{tzm}
\vx_{l}^c = \vect{\Phi}_l(\vX_l).
\end{equation}
Assuming that the conservation laws $\vect{\Phi}_l (\vX_l)$
are independent as functions of $\vx_l$, namely that
$$ \mathrm{rk}  (D_{\vx_l} \vect{\Phi}_l(\vX_l) ) = s_l,$$  
 we have, up to a relabelling of the components of $\vX_{l}$,
the splitting $\vX_{l}=(\vX_{l-1}, \check \vx_{l}, \hat \vx_{l})$, where $\vX_{l-1}\in \RR^{n_1 + \ldots + n_{l-1}}$,
$\check{\vx}_{l} \in \RR^{s_{l}}$, $\hat{\vx}_{l} \in \RR^{n_l - s_{l}} $  
and $\mathrm{det}(D_{\check{ \vx}_{l}}\vect{\Phi}_l)\neq 0$.
Hence,
\eqref{tzm} defines the implicit function
$\check{ \vx}_{l}= \vect{\Psi}_{l}(\vX_{l-1},\hat \vx_{l},\vx_{l}^c )$, which allows to eliminate the variables $\check{\vx}_l$.

The above splitting of $\vX_l$ induces the splittings
$\vF_l = (\vF_{l-1}, \vfcheck_l,\vfhat_l)$ and
$\vF^{(i)}_l = (\vF^{(i)}_{l-1},$ $ \vfcheck^{(i)}_l,\vfhat^{(i)}_l)$ for $1 \leq i \leq 2$.
Let us define the functions   
\begin{eqnarray}
&\vF_k^{red} (\vk,{\vX}_{l-1},\hat{\vx}_l,  \vx_{l}^c, \vx_{l+1},  \vx_{l+2} , \ldots,\vx_m)   = \notag \\
&\vFhat_k (\vk,{\vX}_{l-1},  \vect{\Psi}_{l}( \vX_{l-1},\hat{\vx}_l,\vx_{l}^c), \hat{\vx}_l,\vx_{l+1},  \vx_{l+2} , \ldots,\vx_m),   \label{eq:redfun}
\end{eqnarray}
where $\vFhat_k= \vF_k$ for $1 \leq k \leq l-1$, and $\vFhat_l= (\vF_{l-1}, \vfhat_l)$.
The transformed 
model obtained from the substitution $\check{ \vx}_{l}= \vect{\Psi}_{l}(\vX_{l-1},\hat \vx_{l},\vx_{l}^c )$  is
\begin{eqnarray}
\dot{\hat{\vX}}_{l} &=& \vF_l^{red} 
(\vk,{\vX}_{l-1},\hat{\vx}_l,  \vx_{l}^c, \vx_{l+1},  \vx_{l+2} , \ldots,\vx_m), 
\label{eq:transformed1} \\
\dot \vx_l^c &=&  ((D_{\vX_l} \vect{\Phi}_l ) \vF_l^{(2)} )
(\vk,{\vX}_{l-1},  \vect{\Psi}_{l}( \vX_{l-1},\hat{\vx}_l,\vx_{l}^c), \hat{\vx}_l,\vx_{l+1},  \vx_{l+2} , \ldots,\vx_m), \label{eq:new} \\
\dot{\vx}_{k} &=& \vf_k(\vk,{\vX}_{l-1},  \vect{\Psi}_{l}( \vX_{l-1},\hat{\vx}_l, \vx_{l}^c),\hat{\vx}_l,\vx_{l+1},  \vx_{l+2} , \ldots,\vx_m), \, k \in 
\{l+1,\ldots,m\}.  \label{eq:transformed2}
\end{eqnarray}

We have seen in Theorems~\ref{th:structurelinear} and \ref{th:structurepolynomial}) that $\vect{\Phi}_l =  \vect{\Phi}_l^{(1)} (\vx_l) + \ord{\delta^{q_f}}$,
 where $\vect{\Phi}_l^{(1)} (\vx_l) = \Ord{\delta^{q_f}}$ is the lowest order (dominant) part of  $\vect{\Phi}_l$.
 Furthermore, the dominant part $\vect{\Psi}^{(1)}_{l}$ of $\vect{\Psi}_{l}$ satisfies the equation
\begin{equation}\label{eq:truncons}
\vx_{l}^c = \vect{\Phi}_l^{(1)}(\vect{\Psi}^{(1)}_{l}( \hat{\vx}_l,\vx_{l}^c),\hat{\vx_l}).
\end{equation}
Thus, the truncated versions of the functions $\vF_k^{red}$ are
\begin{eqnarray}
&\vF_k^{red,1} (\vk,{\vX}_{l-1},\hat{\vx}_l,  \vx_{l}^c, \vx_{l+1},  \vx_{l+2} , \ldots,\vx_m)   = \notag \\
&\vFhat^{(1)}_k (\vk,{\vX}_{l-1},  \vect{\Psi}^{(1)}_{l}( \hat{\vx}_l,\vx_{l}^c), \hat{\vx}_l,\vx_{l+1},  \vx_{l+2} , \ldots,\vx_m),   \label{eq:redfuntrunc}
\end{eqnarray}
where $\vFhat^{(1)}_k= \vF^{(1)}_k$ for $1 \leq k \leq l-1$, and  
$\vFhat^{(1)}_l= (\vF^{(1)}_{l-1}, \vfhat_l)$.


We can state now the main result of this section.  Let us assume that the following conditions are satisfied.
\begin{condition}
\label{cond:cons}
$\,$
\begin{enumerate}
    \item For any $\vk\in \RRpp^r$ there exist $\vx\in \RRpp^n$ such that $\vF^{(1)}_l(\vk,\vx)=0$. For all 
    $\vx \in \RRpp^n$ with $\vF^{(1)}_l(\vk,\vx)=0$, we have 
$\mathrm{det}(D_{\vX_k}{ \vF^{(1)}_k}) \neq 0$ 
for all $1 \leq k \leq l-1$ 
and $\mathrm{det}(D_{\vX_l}{ \vF^{(1)}_l}) = 0$.
\item
There is a set of $s_{l}$ \cor{simple} approximate conservation laws 
$$\vect{\Phi}_l(\vX_l) =\transpose{ (\phi_{1l}(\vX_l),\ldots,\phi_{s_{l}l}(\vX_l))}$$ 
depending only on $\vX_l$ such that 
 $(D_{\vX_l} \vect{\Phi}_l )\vF^{(1)}_l = 0$, , where $0 < s_{l} \leq n_l$.
For all $\vk \in \RRpp^r$ and $\vx \in \RRpp^n$ such that $\vF^{(1)}_l(\vk,\vx)=0$ we have that
\begin{equation} \label{eq:rankflphil}
   \mathrm{rk} \left( D_{\vX_l} \begin{pmatrix} 
      \vect{\Phi}_l^{(1)} \\\vFhat^{(1)}_l 
    \end{pmatrix} \right)  = n_1 + \ldots + n_l.
\end{equation}

\item
The  conservation laws $\vect{\Phi}_l (\vX_l)$
are independent as functions of $\vx_l$, namely 
\begin{equation}\label{eq:rankphil}
\mathrm{rk}  (D_{\vx_l} \vect{\Phi}_l(\vX_l) ) = s_l.
\end{equation}
\end{enumerate}
\end{condition}



\begin{theorem}\label{th:qe}
If Condition~\ref{cond:cons} is fulfilled and
$$\vF_l^{red,1}(\vk,{\vX}_{l-1},\hat{\vx}_l,\vx_{l}^c,\vx_{l+1},\vx_{l+2}, \ldots,\vx_m)=0 ,$$ then
$\mathrm{det}(D_{\hat{\vX}_k}\vF_k^{(red,1)}) \neq 0$ for all $1\leq k \leq l$,
where $\hat{\vX}_k={\vX}_k$
for $1\leq k\leq l-1$ and $\hat{\vX}_l=({\vX}_{l-1},\hat{\vx}_l)$.
\end{theorem}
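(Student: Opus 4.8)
The plan is to fix an arbitrary point $P$ on the reduced equilibrium set, that is $P=(\vX_{l-1},\check{\vx}_l,\hat{\vx}_l,\vx_{l+1},\ldots,\vx_m)$ with $\check{\vx}_l=\vect{\Psi}^{(1)}_l(\hat{\vx}_l,\vx_l^c)$ and $\vF_l^{red,1}(P)=0$, and to prove $\det(D_{\hat{\vX}_k}\vF_k^{red,1})\neq 0$ at $P$ separately for $k<l$ and for $k=l$. The first thing I would do is verify that $P$ in fact lies on the truncated variety $\vF^{(1)}_l=0$, since that is where the rank hypotheses of Condition~\ref{cond:cons} are posed. Unpacking $\vF_l^{red,1}(P)=0$ (recall $\vFhat^{(1)}_l=(\vF^{(1)}_{l-1},\vfhat_l)$) gives $\vF^{(1)}_{l-1}(P)=0$ and $\vfhat^{(1)}_l(P)=0$, so only the block $\vfcheck^{(1)}_l$ remains. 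Evaluating the conservation identity $(D_{\vX_l}\vect{\Phi}_l)\vF^{(1)}_l=0$ of Condition~\ref{cond:cons} at $P$ and discarding the terms multiplying $\vF^{(1)}_{l-1}$ and $\vfhat^{(1)}_l$, the only surviving contribution is $(D_{\check{\vx}_l}\vect{\Phi}_l)\,\vfcheck^{(1)}_l(P)$; since the splitting is chosen so that $D_{\check{\vx}_l}\vect{\Phi}_l$ is invertible, this forces $\vfcheck^{(1)}_l(P)=0$, hence $\vF^{(1)}_l(P)=0$.

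With $P$ on $\vF^{(1)}_l=0$, the range $1\le k\le l-1$ is immediate. Here $\vFhat^{(1)}_k=\vF^{(1)}_k$ and $\hat{\vX}_k=\vX_k$, and because the implicit function $\vect{\Psi}^{(1)}_l$ depends only on the slower data $\hat{\vx}_l,\vx_l^c$ and not on the faster variables $\vX_k$, the substitution is constant under $D_{\vX_k}$, so $D_{\vX_k}\vF_k^{red,1}=(D_{\vX_k}\vF^{(1)}_k)(P)$. This is nonsingular by item~1 of Condition~\ref{cond:cons}.

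The case $k=l$ is the crux, and the plan is to identify $D_{\hat{\vX}_l}\vF_l^{red,1}$ with a Schur complement, exactly in the spirit of Lemma~\ref{lemmaschur}. Differentiating \eqref{eq:truncons} yields $D_{\hat{\vx}_l}\vect{\Psi}^{(1)}_l=-(D_{\check{\vx}_l}\vect{\Phi}^{(1)}_l)^{-1}(D_{\hat{\vx}_l}\vect{\Phi}^{(1)}_l)$, while $\vect{\Psi}^{(1)}_l$ is independent of $\vX_{l-1}$; substituting this into the chain-rule expansion of \eqref{eq:redfuntrunc} gives
$$D_{\hat{\vX}_l}\vF_l^{red,1}=D_{\hat{\vX}_l}\vFhat^{(1)}_l-(D_{\check{\vx}_l}\vFhat^{(1)}_l)(D_{\check{\vx}_l}\vect{\Phi}^{(1)}_l)^{-1}(D_{\hat{\vX}_l}\vect{\Phi}^{(1)}_l),$$
which is the Schur complement $\mathcal{J}/A$ of the block $A=D_{\check{\vx}_l}\vect{\Phi}^{(1)}_l$ in the square matrix $\mathcal{J}=D_{\vX_l}\transpose{(\vect{\Phi}^{(1)}_l,\vFhat^{(1)}_l)}$ of size $n_1+\cdots+n_l$. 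The Schur formula \eqref{schur} then gives $\det\mathcal{J}=\det(D_{\hat{\vX}_l}\vF_l^{red,1})\,\det A$. Here $\det A\neq 0$ by the splitting, and $\det\mathcal{J}\neq 0$ because \eqref{eq:rankflphil} states that $\mathcal{J}$ has full rank on $\vF^{(1)}_l=0$, which contains $P$ by the first step. Hence $\det(D_{\hat{\vX}_l}\vF_l^{red,1})\neq 0$, completing the argument.

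I expect the main obstacle to be the chain-rule bookkeeping in the $k=l$ step: one must track the implicit function $\vect{\Psi}^{(1)}_l$ and the frozen conserved coordinate $\vx_l^c$ carefully, check that the $\vX_{l-1}$-columns of the correction term vanish so that the block partition of $\mathcal{J}$ aligns with the splitting $\vX_l=(\vX_{l-1},\check{\vx}_l,\hat{\vx}_l)$, and confirm the row and column counts so that $\mathcal{J}$ is genuinely square ($s_l$ rows from $\vect{\Phi}^{(1)}_l$ and $n_1+\cdots+n_{l-1}+n_l-s_l$ rows from $\vFhat^{(1)}_l$). The conceptually essential, though computationally short, point is the first step, where the defining property of the approximate conservation law is precisely what places the reduced equilibrium set inside the truncated variety on which the rank hypotheses live.
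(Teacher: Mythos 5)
Your proposal is correct, and its core coincides with the paper's own proof: your Schur-complement identity for $D_{\hat{\vX}_l}\vF_l^{red,1}$ is exactly Lemma~\ref{lemma:fred}, derived the same way (differentiate \eqref{eq:truncons}, apply the chain rule to \eqref{eq:redfuntrunc}); the case $1\leq k\leq l-1$ via the independence of $\vect{\Psi}_l^{(1)}$ from $\vX_k$ is verbatim the paper's; and your use of the Schur determinant formula \eqref{schur} is interchangeable with the paper's use of the Guttman rank additivity formula \eqref{guttman}, since by \eqref{eq:rankflphil} the square matrix $D_{\vX_l}\transpose{(\vect{\Phi}_l^{(1)},\vFhat_l^{(1)})}$ of size $n_1+\cdots+n_l$ has full rank, so the two bookkeeping devices give the same conclusion. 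Two remarks on the differences. First, where you write ``the splitting is chosen so that $D_{\check{\vx}_l}\vect{\Phi}_l$ is invertible,'' note that the Schur block you actually invert is $D_{\check{\vx}_l}\vect{\Phi}_l^{(1)}$; the paper closes this gap via the structure Theorems~\ref{th:structurelinear} and \ref{th:structurepolynomial}, which give $D_{\vx_l}\vect{\Phi}_l=D_{\vx_l}\vect{\Phi}_l^{(1)}$ and hence transfer \eqref{eq:rankphil} to the truncated conservation laws---you should make this transfer explicit, but it is immediate. Second, your opening step goes beyond the paper in a useful way: the rank hypotheses in Condition~\ref{cond:cons} are quantified over the variety $\{\vF_l^{(1)}=0\}$, whereas the theorem's hypothesis is only $\vF_l^{red,1}=0$, which directly yields the vanishing of $\vF_{l-1}^{(1)}$ and $\vfhat_l^{(1)}$ but not of $\vfcheck_l^{(1)}$. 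Your argument that the conservation identity $(D_{\vX_l}\vect{\Phi}_l)\vF_l^{(1)}=0$ combined with the invertibility of $D_{\check{\vx}_l}\vect{\Phi}_l$ forces $\vfcheck_l^{(1)}=0$ at any zero of $\vF_l^{red,1}$ is exactly what is needed to legitimize invoking Condition~\ref{cond:cons} at the evaluation point; the paper leaves this implicit (it is essentially the equivalence asserted without proof in the remark following the theorem), so making it explicit is a genuine, if small, improvement in rigor rather than a different route.
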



The proof of Theorem~\ref{th:qe} uses the following lemma.
\begin{lemma}\label{lemma:fred}
We have
$$D_{\hat{\vX}_l} \vF_l^{red,1} = D_{{\vX}_l} \left.
\begin{pmatrix}\vect{\Phi}_l^{(1)} \\ \vFhat^{(1)}_l  \end{pmatrix} 
\right/ D_{\check{\vx}_l} \vect{\Phi}_l^{(1)} .$$ 
\end{lemma}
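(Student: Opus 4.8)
The plan is to prove the identity by a direct chain-rule differentiation of the defining relation \eqref{eq:redfuntrunc} (taken at $k=l$), in exact parallel to the Schur-complement computation already carried out in Lemma~\ref{lemmaschur}. Recall that $\vF_l^{red,1}$ is obtained from $\vFhat^{(1)}_l$ by substituting $\check{\vx}_l = \vect{\Psi}^{(1)}_{l}(\hat{\vx}_l,\vx_l^c)$, so that the variables $\hat{\vX}_l = (\vX_{l-1},\hat{\vx}_l)$ enter $\vF_l^{red,1}$ both directly and indirectly through $\check{\vx}_l$. By Theorems~\ref{th:structurelinear} and \ref{th:structurepolynomial} the dominant part $\vect{\Phi}_l^{(1)}$ depends only on the slowest variables $\vx_l$, hence $\vect{\Psi}^{(1)}_{l}$ is independent of $\vX_{l-1}$; consequently the $\vX_{l-1}$-derivative is clean, namely $D_{\vX_{l-1}}\vF_l^{red,1} = D_{\vX_{l-1}}\vFhat^{(1)}_l$, whereas the $\hat{\vx}_l$-derivative picks up an indirect term, $D_{\hat{\vx}_l}\vF_l^{red,1} = D_{\check{\vx}_l}\vFhat^{(1)}_l\, D_{\hat{\vx}_l}\vect{\Psi}^{(1)}_{l} + D_{\hat{\vx}_l}\vFhat^{(1)}_l$.

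First I would compute $D_{\hat{\vx}_l}\vect{\Psi}^{(1)}_{l}$ by implicitly differentiating the truncated conservation relation \eqref{eq:truncons}, $\vx_l^c = \vect{\Phi}_l^{(1)}(\vect{\Psi}^{(1)}_{l}(\hat{\vx}_l,\vx_l^c),\hat{\vx}_l)$, with respect to $\hat{\vx}_l$ at fixed $\vx_l^c$. This yields $D_{\check{\vx}_l}\vect{\Phi}_l^{(1)}\, D_{\hat{\vx}_l}\vect{\Psi}^{(1)}_{l} + D_{\hat{\vx}_l}\vect{\Phi}_l^{(1)} = 0$. Since the splitting of $\vX_l$ was chosen so that $\mathrm{det}(D_{\check{\vx}_l}\vect{\Phi}_l)\neq 0$, and likewise for its dominant part, the block $D_{\check{\vx}_l}\vect{\Phi}_l^{(1)}$ is invertible, giving $D_{\hat{\vx}_l}\vect{\Psi}^{(1)}_{l} = -(D_{\check{\vx}_l}\vect{\Phi}_l^{(1)})^{-1} D_{\hat{\vx}_l}\vect{\Phi}_l^{(1)}$. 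Substituting back, the $\hat{\vx}_l$-block of $D_{\hat{\vX}_l}\vF_l^{red,1}$ becomes $D_{\hat{\vx}_l}\vFhat^{(1)}_l - D_{\check{\vx}_l}\vFhat^{(1)}_l(D_{\check{\vx}_l}\vect{\Phi}_l^{(1)})^{-1} D_{\hat{\vx}_l}\vect{\Phi}_l^{(1)}$.

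Finally I would recognize the whole thing as the asserted Schur complement by writing out the block matrix $M = D_{\vX_l}\begin{pmatrix}\vect{\Phi}_l^{(1)} \\ \vFhat^{(1)}_l\end{pmatrix}$ with columns ordered as $(\check{\vx}_l,\vX_{l-1},\hat{\vx}_l)$ and rows as $(\vect{\Phi}_l^{(1)},\vFhat^{(1)}_l)$. Then the top-left block is $A = D_{\check{\vx}_l}\vect{\Phi}_l^{(1)}$, the remaining blocks are $B = (D_{\vX_{l-1}}\vect{\Phi}_l^{(1)}, D_{\hat{\vx}_l}\vect{\Phi}_l^{(1)})$, $C = D_{\check{\vx}_l}\vFhat^{(1)}_l$ and $D = (D_{\vX_{l-1}}\vFhat^{(1)}_l, D_{\hat{\vx}_l}\vFhat^{(1)}_l)$. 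Using $D_{\vX_{l-1}}\vect{\Phi}_l^{(1)} = 0$ (again from the structure theorems), the two block-columns of $D - C A^{-1} B$ reduce to $D_{\vX_{l-1}}\vFhat^{(1)}_l$ and $D_{\hat{\vx}_l}\vFhat^{(1)}_l - D_{\check{\vx}_l}\vFhat^{(1)}_l(D_{\check{\vx}_l}\vect{\Phi}_l^{(1)})^{-1} D_{\hat{\vx}_l}\vect{\Phi}_l^{(1)}$, which coincide exactly with the two blocks of $D_{\hat{\vX}_l}\vF_l^{red,1}$ computed above. This is precisely the claim $D_{\hat{\vX}_l}\vF_l^{red,1} = M/A$.

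I expect the only delicate point to be bookkeeping: keeping straight on which of $\vX_{l-1}$, $\check{\vx}_l$, $\hat{\vx}_l$ each factor depends, and ordering the rows and columns of $M$ so that $A = D_{\check{\vx}_l}\vect{\Phi}_l^{(1)}$ occupies the top-left corner. No genuine analytic difficulty arises: invertibility of $A$ is guaranteed by the chosen splitting, so the implicit function $\vect{\Psi}^{(1)}_{l}$ and all the derivatives above are well defined for $\vz \in {\cal M}_l$.
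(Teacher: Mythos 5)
Your proposal is correct and follows essentially the same route as the paper's proof: implicit differentiation of the truncated relation \eqref{eq:truncons} to obtain $D\vect{\Psi}^{(1)}_l = -(D_{\check{\vx}_l}\vect{\Phi}_l^{(1)})^{-1}D\vect{\Phi}_l^{(1)}$, a chain-rule differentiation of \eqref{eq:redfuntrunc}, and identification of the result with the Schur complement. The only difference is presentational: you treat the $\vX_{l-1}$ and $\hat{\vx}_l$ blocks separately and spell out the block structure of $M$ (using $D_{\vX_{l-1}}\vect{\Phi}_l^{(1)}=0$ from the structure theorems), whereas the paper differentiates with respect to $\hat{\vX}_l$ all at once and leaves that bookkeeping implicit.
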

\begin{proof}
Differentiating \eqref{eq:truncons} with respect to $ \hat \vX_l$ we obtain 
$$D_{\hat \vX_l}\vect{\Phi}_l^{(1)} + D_{\check{\vx}_l}\vect{\Phi}_l^{(1)} D_{\hat \vX_l}\vect{\Psi}_l^{(1)} =0.$$
It follows from \eqref{eq:redfuntrunc} that
$$D_{\hat{\vX}_l} \vF_l^{red,1} = D_{\hat{\vX}_l} \vFhat^{(1)}_l + D_{\check{\vx}_l} \vFhat^{(1)}_l
D_{\hat \vX_l}\vect{\Psi}_l^{(1)} = D_{\hat{\vX}_l} \vFhat^{(1)}_l - D_{\check{\vx}_l} \vFhat^{(1)}_l
(D_{\check{\vx}_l}\vect{\Phi}_l^{(1)})^{-1} D_{\hat \vX_l}\vect{\Phi}_l^{(1)},$$ which completes the proof. 
\end{proof}

\begin{proof}[Proof of Theorem~\ref{th:qe}]
We prove that 
$D_{\hat{\vX}_k} \vF_k^{red,1}$ is invertible for all $1\leq k \leq l$.
Using the structure Theorems~\ref{th:structurelinear} and \ref{th:structurepolynomial} we find that $D_{\vx_l}\vect{\Phi}_l=D_{\vx_l}\vect{\Phi}^{(1)}_l$. Thus,  it follows from \eqref{eq:rankphil} that 
$\mathrm{rk}  (D_{\vx_l} \vect{\Phi}_l^{(1)} ) = s_l$.
Using \eqref{eq:rankflphil}, the Guttman rank additivity formula \eqref{guttman} and Lemma~\ref{lemma:fred}, we find that 
$$\mathrm{rk} \left(D_{\hat{\vX}_l} \vF_l^{red,1} \right) = n_1+n_2+\ldots+n_l - s_l$$ and so   
$D_{\hat{\vX}_l} \vFhat_l^{red,1}$ is invertible.
Since $\vect{\psi}_l^{(1)}$ does not depend on $\vX_k$ for $1\leq k \leq l-1$, it follows that
$D_{\hat{\vX}_k} \vF_k^{red,1} = D_{{\vX}_k} \vF_k^{(1)}$. Since $D_{{\vX}_k} \vF_k^{(1)}$ is invertible, the same is true for $D_{\hat{\vX}_k} \vF_k^{red,1}$. 

\end{proof}

\begin{remark}
Because conservation laws are used to eliminate 
the variables $\check{\vx}_l$, the rank conditions 
 \eqref{eq:rankflphil} and \eqref{eq:rankphil} 
 are satisfied if
$$\mathrm{det}\left( D_{\vX_l} \begin{pmatrix} 
 \vect{\Phi}_l^{(1)} \\   \vFhat^{(1)}_l  
    \end{pmatrix}  \right) \neq 0  $$
 and $ \mathrm{det}( D_{\check{\vx}_l} \vect{\Phi}_l ) \neq 0$, respectively. 
\end{remark}


\begin{remark}
Theorem~\ref{th:qe} allows to define $l$-chains in the case of a quasi-equilibrium. The set of positive solutions
of the set of equations 
$$\vF_k^{red,1} (\vk,{\vX}_{l-1},\hat{\vx}_l,\vx_l^c,\vx_{l+1},   \ldots,  \vx_m)   = 0,$$ 
that is equivalent to 
$$\vFhat_k^{(1)} (\vk,{\vX}_{l-1},{\vx}_l,\vx_l^c,\vx_{l+1},   \ldots,  \vx_m)   = 0,\, \vect{\Phi}_l^{(1)}({\vx}_l)=\vx_l^c,$$
is denoted
${\cal M}_k^{QE}$ and
represents the $k$-th
{\bf quasi-equilibrium variety} (intersected with the first orthant). These sets satisfy
$$\RRpp^n= {\cal M}_0^{QE}  \supset {\cal M}_1^{QE} \supset \ldots \supset {\cal M}_l^{QE}.$$ 
The concept of hyperbolically attractive $l-$chains is applicable to quasi-equilibrium varieties as well. 
\end{remark}

By the results of Section~\ref{sec:acslow} the new variables $\vx_l^c$ are slower than $\hat{\vX}_l$ 
and by Theorem~\ref{th:qe}
the transformed model satisfies the non-degeneracy conditions 
$\mathrm{det}(D_{\hat{\vX}_k}F_k^{(red,1)})\neq 0$
up to order $l$.
If the approximate conservation laws are also exact, the new variables $\vx_l^c$  are
constant and stand for new parameters. Then, the new equations
\eqref{eq:new} are not added to the transformed ones \eqref{eq:transformed1} and \eqref{eq:transformed2}.
In this case as well, the transformed model \eqref{eq:transformed1} and \eqref{eq:transformed2} satisfies the non-degeneracy conditions up to order $l$.

\subsubsection{Algorithmic Solution for Eliminating the Approximate Conservation Laws}
The previous section allows us to define an algorithm which transforms the CRN \eqref{eq:fi} into an equivalent one 
that does not have approximate conservation laws and that can be further reduced using the method introduced in \cite{kruff2021algorithmic}. 
During this transformation, some old variables 
are substituted by new ones, representing approximate
conservation laws that are not exact. Also, each exact conservation law leads to the creation of a new 
parameter and to the elimination of one 
variable together with the corresponding ODE.

Algorithm~\ref{alg:transform}
transforms the CRN into another CRN that satisfies the condition
$\mathrm{det}(D_{\vX_k}$ $F_k)\neq 0$ for all $1\leq k\leq l$ up to the $l$-th timescale. 
It further iterates the procedure for increasing $l$, computes a rescaled
and truncated version of the CRN at each step by using the 
algorithm ScaleAndTruncate introduced in \cite{kruff2021algorithmic}.

\begin{algorithm}[ht!] 
\begin{algorithmic}[1]
    \caption{\label{alg:transform}$\operatorname{TransformCRNexplicit}$}
 \REQUIRE 
A CRN given by a polynomial vector field $\vF(\vk,\vx)$.  
 \smallskip
\ENSURE A transformed CRN given by a modified polynomial vector field. 
\STATE{ScaleAndTruncate.}
\STATE{$l:=0$}
\WHILE{$l < m$}
\STATE{$l:=l+1$}
\WHILE{$\mathrm{det}(D_{\vX_l}F_l^{(1)}) \neq 0$}
\STATE{$l:=l+1$}
\ENDWHILE
\STATE{Find a complete set $\vect{\Phi}_l$ of independent conservation laws for $\vF^{(1)}_l$
satisfying conditions \eqref{eq:rankflphil} and \eqref{eq:rankphil}.}
\STATE{\label{phisolve} Compute the solution  $\check \vx_l =  \vect{\psi}_{l} (\vX_{l-1},\hat\vx_l,\vx_l^c)$ of the 
equation $\vx_l^c = \vect{\Phi_l}(\vX_{l-1},\hat\vx_l,\check \vx_l)$.}
\FOR{$i: = 1$ \TO $s_l$}
\IF{$\Phi_{il}$ is not exact}
\STATE{Replace the ODE satisfied by $\check{x}_{il}$ by
$\dot{x}_{il}^c = (D_{\vx}{\Phi}_{il}) \vF(\vk,\vx)$.}
\ELSE
\STATE{Delete the ODE satisfied by $\check{x}_{il}$.}
\STATE{Define the new constant $k_{il}^c$ and concatenate it with 
$\vk$.}
\STATE{Substitute $x_{il}^c \leftarrow k_{il}^c$.}
\ENDIF
\STATE{Substitute $\check{x}_{il} \leftarrow \psi_{il} (\vX_{l-1},\hat\vx_l,\vx_l^c)$. }
\ENDFOR
\STATE{ScaleAndTruncate.}
\ENDWHILE
\end{algorithmic}
\end{algorithm}

\begin{algorithm}[ht!] 
\begin{algorithmic}[1]
    \caption{\label{alg:transformimplicit}$\operatorname{TransformCRNimplicit}$}
 \REQUIRE 
A CRN given by a polynomial vector field $\vF(\vk,\vx)$.  
 \smallskip
\ENSURE A differential algebraic CRN given by a modified polynomial vector field and a set of algebraic constraints. 
\STATE{ScaleAndTruncate.}
\STATE{$l:=0$}
\WHILE{$l < m$}
\STATE{$l:=l+1$}
\WHILE{$\mathrm{det}(D_{\vX_l}F_l^{(1)}) \neq 0$}
\STATE{$l:=l+1$}
\ENDWHILE
\STATE{Find a complete set $\vect{\Phi}_l$ of independent conservation laws for $\vF^{(1)}_l$
satisfying conditions \eqref{eq:rankflphil} and \eqref{eq:rankphil}.}
\FOR{$i: = 1$ \TO $s_l$}
\IF{$\phi_{il}$ is not exact}
\STATE{Replace the ODE satisfied by $\check{x}_{il}$ by
$\dot{x}_{il}^c = D_{\vx}{\phi}_{il} \vF(\vk,\vx)$.}
\STATE{Add $\vx_l^c = {\phi_{il}}(\vx)$ to the set of algebraic constraints.}
\ELSE
\STATE{Delete the ODE satisfied by $\check{x}_{il}$}
\STATE{Define new constants $k_{il}^c$ and concatenate them to 
$\vk$.}
\STATE{Add $k_{il}^c = {\phi_{il}}(\vx)$ to the set of algebraic constraints.}
\ENDIF
\ENDFOR
\STATE{ScaleAndTruncate.}
\ENDWHILE
\end{algorithmic}
\end{algorithm}

If none of the approximate conservation laws used in the transformation are exact, then
the resulting CRN has the same number of variables, ODEs and parameters as the initial one. 
Any exact conservation law used in the transformation
reduces the numbers of variables and ODEs 
by one and increases the number of parameters by one. 

Because at each step $l$ the total number of variables can only decrease, the total number of 
variables having timescales slower than $\vx_l$ and remaining to be treated is 
strictly decreasing with $l$.
Therefore, the algorithm terminates in a finite number of steps. 

The applicability of Algorithm~\ref{alg:transform} is limited by the possibility of 
solving the equation $\vx_l^c = \vect{\Phi}_l(\vX_l)$ symbolically (elimination step \ref{phisolve} of the algorithm). This 
is always possible when all the approximate conservation laws $\vect{\Phi}_l$ are linear, but may not be easy when 
the completeness condition \eqref{eq:rankflphil} 
can not be fulfilled without some polynomial conservation laws.
However, for most biochemical CRN models used in 
computational biology, this situation does not arise:
linear conservation laws are enough to obtain completeness. 

If one wants to avoid the elimination step   (for
instance, when there are polynomial conservation laws) there is another possible algorithmic 
solution whose output is 
a differential algebraic system. 
More precisely, at each step $l$ one
considers the truncated vector field $\vF^{(1)}_l(\vk,\vx)$ and the conservation 
law $\vect{\Phi}_l(\vx)$. The former is used for the ODE part of the transformed model
and the latter defines the algebraic constraint \eqref{tzm}.
This choice is implemented in Algorithm~\ref{alg:transformimplicit}.
One should note that the symbolic 
reduction algorithms introduced in \cite{kruff2021algorithmic} also 
use an implicit formulation of the fast variables elimination that leads to differential algebraic reduced systems.
Using Lemma~\ref{lemma:fred}
and Proposition~\ref{prophyper} it follows that the hyperbolicity test 
justifying the reduction of the transformed model should be performed on the eigenvalues of the
Schur complement
$$ \left. \left( D_{{\vX}_l} \left.
\begin{pmatrix} 
\vect{\Phi}_l^{(1)}\\
\vFhat^{(1)}_l    \end{pmatrix} 
\right/ D_{\check{\vx}_l} \vect{\Phi}_l^{(1)}  \right)
\right/
\left( D_{{\vX}_{l-1}} \left.
\begin{pmatrix}
\vect{\Phi}_{l-1}^{(1)} \\
\vFhat^{(1)}_{l-1} 
\end{pmatrix} 
\right/ D_{\check{\vx}_{l-1}} \vect{\Phi}_{l-1}^{(1)}   \right).$$

\section{Conclusions}
In this paper we showed how to transform a system of polynomial 
ODEs with approximate conservation laws into an equivalent system without any 
approximate conservation laws. This allowed us to reduce the transformed 
system using a previously introduced method which uses geometric singular 
perturbation theory for multiple timescales \cite{kruff2021algorithmic}.

The output of our reduction algorithm depends on the choice of a tropical equilibration solution. 
Changing this solution may lead to different timescale orderings of the variables, truncated systems, approximate conservation laws and reduced models. However,  
continuous branches of tropical solutions lead to the
same truncated systems, approximate conservation laws and
ordering of timescales. A branch of tropical equilibation solutions corresponds to a polyhedral domain in the space of logarithms
of species concentrations \cite{desoeuvres_CMSB2022}.
Furthermore, the validity of a given reduction can be extended to neighborhoods of such
polyhedra in logarithms of species concentrations.
For these reasons, the 
reductions based on orders of magnitude 
comparison (including those discussed in this paper) are
 robust 
 \cite{gr08,radulescu2008robust,radulescu2012reduction}.
However, biochemical CRNs are often 
excitable and their trajectories explore very large
domains of the species concentrations space. 
In such cases, the CRN may change the
branch of tropical solutions several times 
along the same trajectory. Thus,
scalings, truncated systems and even approximate
conservation laws may change and several different reductions
must be used along the same trajectory 
\cite{radulescu2015symbolic,samal2016geometric,desoeuvres_CMSB2022}. 
The study of switching between different reduced models asks 
for different mathematical methods such as 
{\em blow-ups} \cite{krupa2001extending} and
will be treated in future work. 

\cor{In our method we consider that fast dynamics relaxes to
a quasi-equilibrium or quasi-steady state. Approximate conservation 
laws can also be relevant in situations when fast dynamics is
periodic. This situation, needing averaging techniques,
has been discussed for perturbed 
Hamiltonian systems in \cite{freidlin2022perturbation}. The long-time behavior of such systems turns to be 
universal and corresponds to slow random motion on the 
graph of connected components of the Hamiltonian level sets \cite{freidlin2022perturbation}. }

The general usefulness of reduced models follows from their reduced
number of variables and parameters. A reduced model can be
more easily simulated, analysed and learned from data. 
Beyond these benefits, the model reduction process 
unfolds useful information about the full model. 
First, it provides a classification of the parameters,
according to their identifiability, that is very useful for machine
learning applications \cite{radulescu2008robust,radulescu2012reduction}. 
Parameters of the full model, not occurring in the reduced model 
are {\em sloppy} in the sense of a lack of sensibility of model
properties with respect to them. Other parameters of the 
full model, occurring in the reduced model in a grouped 
manner, for instance as monomials, are not identifiable
independently. 
The reduction process also outputs  timescales of different
variables. 
This is important for understanding the dynamics of the system and
in certain cases can be used to gain biological understanding. 
In particular, slow variables are involved into 
memory mechanisms, important in learning processes and for the maintenance of the biological identity, whereas 
fast variables are important for complex
responses needed for
adaptation to external changes.


\section{A case study: reduction of a signaling pathway model }
\label{sec:SM1}
\subsection{Model and its scaling}
The TGF-$\beta$ signaling model including including transcriptional repression of SMAD transcription factors 
by TIF1-$\gamma$ is described by 21 ODEs  \cite{andrieux2012dynamic}:
\begin{eqnarray}
\dot x_1  &=& k_2x_2 - k_1x_1 - k_{16}x_1x_{11} \notag \\
\dot x_2  &=& k_1x_1 - k_2x_2 + k_{17}k_{36}x_6 \notag \\
\dot x_3  &=& k_3x_4 - k_3x_3 + k_7x_7 + k_{33}k_{38}x_{20} - k_6x_3x_5 \notag \\
\dot x_4  &=& k_3x_3 - k_3x_4 + k_9x_8 - k_8x_4x_6   \notag \\
\dot x_5  &=& k_5x_6 - k_4x_5 + k_7x_7 + 2k_{11}x_9 - 2k_{10}x_5^2 - k_6x_3x_5 + k_{16}x_1x_{11} \notag \\
\dot x_6 &=& k_4x_5 - k_5x_6 + k_9x_8 + 2k_{13}x_{10} + k_{35}x_{21} - 2k_{12}x_6^2 - k_{17}k_{36}x_6 - k_8x_4x_6 \notag \\
\dot x_7 &=& k_6x_3x_5 - x_7(k_7 + k_{14}) \notag \\
\dot x_8 &=& k_{14}x_7 - k_9x_8 + k_8x_4x_6 - k_{31}x_8x_{17} \notag \\
\dot x_9 &=&  k_{10}x_5^2 - x_9(k_{11} + k_{15}) \notag \\
\dot x_{10} & =& k_{15}x_9 - k_{13}x_{10} + k_{12}x_6^2 \notag \\
\dot x_{11} &=& k_{23}x_{14} - k_{30}x_{11} \notag \\
\dot x_{12} &=& k_{18} - x_{12}(k_{20} + k_{26}) + k_{30}x_{11} + k_{27}x_{15} - k_{22}k_{37}x_{12}x_{13} \notag \\
\dot x_{13}  &=& k_{19} - x_{13}(k_{21} + k_{28}) + k_{30}x_{11} + k_{29}x_{16} - k_{22}k_{37}x_{12}x_{13} \notag \\
\dot x_{14}  &=& k_{22}k_{37}x_{12}x_{13} - x_{14}(k_{23} + k_{24} + k_{25}) \notag \\
\dot x_{15}  &=& k_{26}x_{12} - k_{27}x_{15} \notag \\
\dot x_{16}  &=& k_{28}x_{13} - k_{29}x_{16} \notag 
\end{eqnarray}
\begin{eqnarray}
\dot x_{17}  &=& k_{35}x_{21} - k_{31}x_{8}x_{17} \notag \\
\dot x_{18}  &=& k_{31}x_{8}x_{17} - k_{34}x_{18} \notag \\
\dot x_{19}  &=& k_{34}x_{18} - k_{32}x_{19} \notag \\
\dot x_{20}  &=& k_{32}x_{19} - k_{33}k_{38}x_{20} \notag \\
\dot x_{21}  &=& k_{34}x_{18} - k_{35}x_{21} 
\end{eqnarray}

This model is particularly interesting because it contains multiple exact and approximate conservation laws, and many timescales. 
The model has three exact linear conservation laws 
$x_{17} + x_{18} + x_{21}=k_{39}$, $x_1 + x_2 + x_5 + x_6 + x_7 + x_8 + 2x_9 + 2x_{10} + x_{18} + x_{21}=k_{40}$, $x_3 + x_4 + x_7 + x_8 + x_{18} + x_{19} + x_{20}=k_{41}$, whose constant values 
$k_{39}$,$k_{40}$,$k_{41}$ can be interpreted as the total amounts of TIF1-$\gamma$, SMAD2, and SMAD4, respectively.

We propose a reduction based on the total tropical equilibration 
$$d=(-2,    -1,    -2,    -2,     0,     0,   -1,    -1,     1,     1,     1,     1,     4,     2,     0,     3,    -1,    -1,    -1),$$ computed for $\epsilon=1/11$. 
This total equilibration solution is the closest, in logarithmic coordinates, to the steady state of the TGFb model. 

The rescaled system of ODEs is
\begin{eqnarray}\label{eq:tgfbrescaled}
\dot y_1  &=&	 \epsilon^2(\bar k_2 y_2- \bar k_1 y_1 - \epsilon^2 \bar k_{16} y_1 y_{11}) \notag \\
\dot y_2  &=&	 \epsilon^1(\bar k_1 y_1 + \epsilon^2 \bar k_{17} \bar k_{36} y_6-\bar k_2 y_2) \notag \\
\dot y_3  &=&	 \epsilon^2(\bar k_3 y_4 + \epsilon \bar k_7 y_7 + \epsilon \bar k_{33} \bar k_{38} y_{20}- \bar k_3 y_3 - \epsilon \bar k_6 y_3 y_5) \notag \\
\dot y_4  &=&	 \epsilon^2(\bar k_3 y_3 + \epsilon \bar k_9 y_8- \bar k_3 y_4 - \epsilon \bar k_8 y_4 y_6) \notag \\
\dot y_5  &=&	 \epsilon^1(\bar k_5 y_6 + \bar k_7 y_7 + 2 \epsilon^2 \bar k_{11} y_9 + \epsilon \bar k_{16} y_1 y_{11}- 2 \epsilon^2 \bar k_{10} y_5^2- \notag \\
&&- \epsilon \bar k_4 y_5 - \bar k_6 y_3 y_5) \notag \\
\dot y_6  &=&	 \epsilon^1(\bar k_9 y_8 + \bar k_{35} y_{21} + 2 \epsilon^2 \bar k_{13} y_{10} + \epsilon \bar k_4 y_5- \bar k_5 y_6 -  \notag \\
&&- 2 \epsilon^2 \bar k_{12} y_6^2 - \bar k_8 y_4 y_6 - \epsilon \bar k_{17} \bar k_{36} y_6) \notag \\
\dot y_7  &=&	 \epsilon^2(\bar k_6 y_3 y_5- \bar k_7 y_7 - \bar k_{14} y_7) \notag \\
\dot y_8  &=&	 \epsilon^2(\bar k_{14} y_7 + \bar k_8 y_4 y_6- \bar k_9 y_8 - \bar k_{31} y_8 y_{17}) \notag \\
\dot y_9  &=&	 \epsilon^2(\bar k_{10} y_5^2- \bar k_{11} y_9 - \bar k_{15} y_9) \notag \\
\dot y_{10}  &=&	 \epsilon^2(\bar k_{15} y_9 + \bar k_{12} y_6^2-\bar k_{13} y_{10}) \notag \\
\dot y_{11}  &=&	 \epsilon^3(\bar k_{23} y_{14}-\bar k_{30} y_{11}) \notag \\
\dot y_{12}  &=&	 \epsilon^2(\epsilon \bar k_{18} + \bar k_{27} y_{15} + \epsilon \bar k_{30} y_{11}- \bar k_{26} y_{12} - \epsilon \bar k_{20} y_{12} - \epsilon \bar k_{22} \bar k_{37} y_{12} y_{13}) \notag \\
\dot y_{13}  &=&	 \epsilon^0(\bar k_{19} + \bar k_{30} y_{11} + \epsilon^2 \bar k_{29} y_{16}- \epsilon^3 \bar k_{21} y_{13} - \epsilon^2 \bar k_{28} y_{13} - \bar k_{22} \bar k_{37} y_{12} y_{13}) \notag \\
\dot y_{14}  &=&	 \epsilon^2(\bar k_{22} \bar k_{37} y_{12} y_{13}- \bar k_{23} y_{14} - \bar k_{25} y_{14} - \epsilon \bar k_{24} y_{14}) \notag 
\end{eqnarray}
\begin{eqnarray}
\dot y_{15}  &=&	 \epsilon^3(\bar k_{26} y_{12}-\bar k_{27} y_{15} ) \notag \\
\dot y_{16}  &=&	 \epsilon^3(\bar k_{28} y_{13}-\bar k_{29} y_{16}) \notag \\
\dot y_{17}  &=&	 \epsilon^2(\bar k_{35} y_{21} -\bar k_{31} y_{8} y_{17}) \notag \\
\dot y_{18}  &=&	 \epsilon^2(\bar k_{31} y_{8} y_{17}-\bar k_{34} y_{18}) \notag \\
\dot y_{19}  &=&	 \epsilon^2(\bar k_{34} y_{18}-\bar k_{32} y_{19}) \notag \\
\dot y_{20}  &=&	 \epsilon^1(\bar k_{32} y_{19}-\bar k_{33} \bar k_{38} y_{20}) \notag \\
\dot y_{21}  &=&	 \epsilon^2(\bar k_{34} y_{18}-\bar k_{35}y_{21}), 
\end{eqnarray}
and the truncated rescaled system is 
\begin{eqnarray}\label{eq:tgfbtruncated}
\dot y_1  &=&	 \epsilon^2(\bar k_2 y_2- \bar k_1 y_1 ) \notag \\
\dot y_2  &=&	 \epsilon^1(\bar k_1 y_1 -\bar k_2 y_2) \notag \\
\dot y_3  &=&	 \epsilon^2(\bar k_3 y_4 - \bar k_3 y_3 ) \notag \\
\dot y_4  &=&	 \epsilon^2(\bar k_3 y_3 - \bar k_3 y_4 ) \notag \\
\dot y_5  &=&	 \epsilon^1(\bar k_5 y_6 + \bar k_7 y_7  - \bar k_6 y_3 y_5) \notag \\
\dot y_6  &=&	 \epsilon^1(\bar k_9 y_8 + \bar k_{35} y_{21}  - \bar k_8 y_4 y_6 ) \notag \\
\dot y_7  &=&	 \epsilon^2(\bar k_6 y_3 y_5- \bar k_7 y_7 - \bar k_{14} y_7) \notag \\
\dot y_8  &=&	 \epsilon^2(\bar k_{14} y_7 + \bar k_8 y_4 y_6- \bar k_9 y_8 - \bar k_{31} y_8 y_{17}) \notag \\
\dot y_9  &=&	 \epsilon^2(\bar k_{10} y_5^2- \bar k_{11} y_9 - \bar k_{15} y_9) \notag \\
\dot y_{10}  &=&	 \epsilon^2(\bar k_{15} y_9 + \bar k_{12} y_6^2-\bar k_{13} y_{10}) \notag \\
\dot y_{11}  &=&	 \epsilon^3(\bar k_{23} y_{14}-\bar k_{30} y_{11}) \notag \\
\dot y_{12}  &=&	 \epsilon^2(\bar k_{27} y_{15} - \bar k_{26} y_{12} ) \notag \\
\dot y_{13}  &=&	 \epsilon^0(\bar k_{19} + \bar k_{30} y_{11} - \bar k_{22} \bar k_{37} y_{12} y_{13}) \notag \\
\dot y_{14}  &=&	 \epsilon^2(\bar k_{22} \bar k_{37} y_{12} y_{13}- \bar k_{23} y_{14} - \bar k_{25} y_{14} ) \notag \\
\dot y_{15}  &=&	 \epsilon^3(\bar k_{26} y_{12}-\bar k_{27} y_{15} ) \notag \\
\dot y_{16}  &=&	 \epsilon^3(\bar k_{28} y_{13}-\bar k_{29} y_{16}) \notag \\
\dot y_{17}  &=&	 \epsilon^2(\bar k_{35} y_{21} -\bar k_{31} y_{8} y_{17}) \notag \\
\dot y_{18}  &=&	 \epsilon^2(\bar k_{31} y_{8} y_{17}-\bar k_{34} y_{18}) \notag \\
\dot y_{19}  &=&	 \epsilon^2(\bar k_{34} y_{18}-\bar k_{32} y_{19}) \notag \\
\dot y_{20}  &=&	 \epsilon^1(\bar k_{32} y_{19}-\bar k_{33} \bar k_{38} y_{20}) \notag \\
\dot y_{21}  &=&	 \epsilon^2(\bar k_{34} y_{18}-\bar k_{35}y_{21}), 
\end{eqnarray}
After this scaling four timescales are apparent,  in order from the fastest to the slowest: $\epsilon^0$, $\epsilon^1$, $\epsilon^2$, $\epsilon^3$.
The corresponding groups of variables 
having these timescales
are, in order from the fastest to the slowest: $\vz_1=y_{13}$, $\vz_2 = (y_2,y_5,y_6, y_{20})$, 
$\vz_3=(y_1,y_3,y_4,y_7,y_8,y_9,y_{10},y_{12},y_{14},y_{17}$ $,y_{18},y_{19},y_{21})$,$\vz_4=(y_{11},y_{15},y_{16})$.
Thus $n_1=1$, $n_2=4$, $n_3=13$, $n_4=3$.

\subsection{Elimination of the conservation laws}
We now transform the model into an equivalent one that
has no conservation laws. 
At the first iteration, step 5 of the Algorithm~\ref{alg:transform}, we find
$|D_{\vX_1} \vF^{(1)}_1| = -k_{22}k_{37}x_{12} \neq 0$,  
$|D_{\vX_2} \vF^{(1)}_2| = -k_2k_6k_{22}k_{33}k_{37}k_{38}x_3x_{12}(k_5 + k_8x_4) \neq 0 $, but
$|D_{\vX_3} \vF^{(1)}_3| = 0$. Thus $l=3$ 
at the step 8.

At the step 8, building a stoichiometric matrix $\vect{S}^{(1)}_3$ from $\vF^{(1)}_3$ we find 
four linear, independent, approximate conservation laws:
$x_1 +x_2$, $x_3+x_4$, $x_5 + x_6 + x_7 + x_8 + x_{18} + x_{21}$, $x_{17} + x_{18} + x_{21}$. The last one 
is an exact conservation law that we have already interpreted. The first three 
approximate conservation laws 
can be interpreted as: the total free  SMAD2,
the total free SMAD4, and the total phosphorylated SMAD2 free or forming heterodimers 
(excluding pS22c, pS22n that are homodimers, and pS24nTIF, pS2nTIF that are trimers), respectively. 
We have $s_3=4$.



At the step 9, we choose $\check \vx_3=(x_1,x_3,x_8,x_{21})$, that at step 13
are substituted as
$x_1 \leftarrow x_1-x_2$ ($x_1^c=x_1+x_2$ is renamed $x_1$), 
$x_3 \leftarrow x_3-x_4$ ($x_2^c=x_3+x_4$ is renamed $x_3$),
$x_8 \leftarrow -x_5 - x_6 - x_7 + x_8 - x_{18} - x_{21}$ ($x_3^c = x_5 + x_6 + x_7 + x_8 + x_{18} + x_{21}$ is renamed $x_8$),
$x_{21} \leftarrow k_{39}-(x_{17} + x_{18})$ ($x_4^c=x_{17} + x_{18} + x_{21}$ is renamed $k_{39}$, a parameter because the last conservation law is exact). 
Because the old variables 
are positive, the new variables must obey $x_1 \geq x_2$, $x_3 \geq x_4$,
$x_8 \geq x_5 + x_6 + x_7 + x_8 + x_{18} + x_{21}$ and $k_{39} \geq x_{17} + x_{18}$. 

After this substitution, the ScaleandTruncate step 20 reveals a fifth, slower timescale  of order $\epsilon^4$, that results from approximate conservation
laws. 

At the step 8 of the second iteration we get $l=4$, $|D_{\vX_k} \vF^{(1)}_k| \neq 0, k\in\{1,3\}$
 and $|D_{\vX_4} \vF^{(1)}_4| = 0$.
We find then two linear approximate conservation laws
$x_3 - x_5 - x_6 + x_{17} + x_{18} + x_{19} + x_{20}$,
$x_{12} + x_{15}$. In initial variables,
the first one corresponds to 
$x_3+x_4 - x_5 - x_6 + x_{17} + x_{18} + x_{19} + x_{20}$.
At this iteration $\check \vx_4=(x_3,x_{15})$
are substituted as
$x_3 \leftarrow x_3 + x_5 + x_6 - x_{17} - x_{18} - x_{19} - x_{20}$,
$x_{15} \leftarrow -x_{12} + x_{15}$.
All the new variables have timescales $\epsilon^4$.

At  the step 8 of the third iteration we find $l=5$, $|D_{\vX_k} \vF^{(1)}_k| \neq 0, k\in\{1,4\}$
but $|D_{\vX_5} \vF^{(1)}_5| = 0$. 
We get two new approximate conservation laws
$x_3+x_8$ and $x_3-x_1$. The first one is  an exact conservation as in initial
variables is 
$x_3+x_4   + x_7 + x_8  + x_{18} + x_{19} + x_{20} + x_{17} + x_{18} + x_{21}=k_{39}+k_{41}$.
At this iteration $\check \vx_5=(x_1,x_3)$
are substituted as
$x_1 \leftarrow x_3 - x_1$, 
$x_3 \leftarrow k_{39} + k_{41} - x_8$.
After this iteration, a sixth timescale occurs, of order $\epsilon^5$ for the variable $x_1$. 

At  step 8 of the fourth iteration $l=6$, 
 $|D_{\vX_k} \vF^{(1)}_k| \neq 0, k\in\{1,5\}$
but $|D_{\vX_6} \vF^{(1)}_6| = 0$.
We identify one more, exact, conservation law $2 x_{10} + 2 x_9 - x_1$ that in initial
variables represents $x_1 + x_2 + x_5 + x_6 + x_7 + x_8 + 2x_9 + 2x_{10} + x_{18} + x_{21} = k_{40}-k_{39}-k_{41}$. The variable $x_1$ is eliminated and the timescale  of order $\epsilon^5$
disappears. The substitution is $x_1 \leftarrow 2x_{10} + 2 x_9 - k_{40}+k_{39}+k_{41}$.
After the fourth iteration the full Jacobian matrix is regular and there are no more conservation law, approximate or exact. 
Five timescales remain, of orders $\epsilon^0$,$\epsilon^1$,$\epsilon^2$,$\epsilon^3$,$\epsilon^4$.



To summarize, 6 approximate and 3 exact 
conservation laws were used in this
transformation. The 3 exact conservation laws
were used to eliminate 3 of the initial 
system variables, see Table~\ref{table:transformed_steps}. Among 
the 6 approximate conservation laws, 2 
were kept as variables in the final transformed model,
the other being substituted at different steps
of the procedure. 
The final transformed
model has a reduced dimensionality (18 variables)
and no conservation laws. The transformed model
reads:
\begin{eqnarray}
\dot x_2 &=&  k_{17}  k_{36}   x_{6}  -  k_{1}  ( x_{2}  -  k_{40}  +  x_{8}  + 2  x_{9}  + 2  x_{10} ) -  k_{2}   x_{2},  \notag \\
\dot x_4 &=& 	 -  k_{3}  ( x_{4}  -  k_{41}  -  k_{39}  -  x_{5}  -  x_{6}  +  x_{8}  +  x_{17}  +  x_{18}  +  x_{19}  +  x_{20} ) -  k_{3}   x_{4}  -\notag \\
 &&- k_{9}  ( k_{39}  +  x_{5}  +  x_{6}  +  x_{7}  -  x_{8}  -  x_{17} ) -  k_{8}   x_{4}   x_{6},  \notag \\
\dot x_5 &=& 	  k_{5}   x_{6}  -  k_{4}   x_{5}  +  k_{7}   x_{7}  + 2  k_{11}   x_{9}  - 2  k_{10}   x_{5}^2 -  k_{16}   x_{11}  ( x_{2}  -  k_{40}  +  x_{8}  + 2  x_{9}+\notag \\
&& + 2  x_{10} )+k_{6}   x_{5}  ( x_{4}  -  k_{41}  -  k_{39}  -  x_{5}  -  x_{6}  +  x_{8}  +  x_{17}  +  x_{18}  +  x_{19}  +  x_{20} ), \notag \\
\dot x_6 &=& 	  k_{4}   x_{5}  -  k_{5}   x_{6}  + 2  k_{13}   x_{10}  - 2  k_{12}   x_{6}^2 -  k_{9}  ( k_{39}  +  x_{5}  +  x_{6}  +  x_{7}  -  x_{8}  -  x_{17} ) -   \notag \\
&&- k_{35}  ( x_{17}  -  k_{39}  +  x_{18} ) -k_{17}   k_{36}   x_{6}  -  k_{8}   x_{4}   x_{6},  \notag \\
\dot x_7 &=& 	 -  x_{7}  ( k_{7}  +  k_{14} ) -  k_{6}   x_{5}  ( x_{4}  -  k_{41}  -  k_{39}  -  x_{5}  -  x_{6}  +  x_{8}  +  x_{17}  +  x_{18}  + \notag \\
&& + x_{19 } +  x_{20} ), \notag \\
\dot x_8 &=& 	  k_{7}   x_{7}  -  x_{7}  ( k_{7}  +  k_{14} ) + 2  k_{11}   x_{9}  +  k_{14}   x_{7}  +
2  k_{13}   x_{10}  - 2  k_{10}   x_{5} ^2 - 2  k_{12}   x_{6} ^2 - \notag \\
&& - k_{16}   x_{11} 
( x_{2}  -  k_{40}  +  x_{8}  + 2  x_{9}  + 2  x_{10} ) -  k_{17}   k_{36}   x_{6},  \notag \\
\dot x_9 &=& 	  k_{10}   x_{5}^2 -  x_{9}  ( k_{11}  +  k_{15} ), \notag \\
\dot x_{10} &=& 	  k_{15}   x_{9}  -  k_{13}   x_{10}  +  k_{12}   x_{6}^2, \notag \\ 
\dot x_{11} &=& 	  k_{23}   x_{14}  -  k_{30}   x_{11} , \notag \\
\dot x_{12} &=& 	  k_{18}  -  x_{12 } ( k_{20}  +  k_{26} ) +  k_{30}   x_{11}  - 
k_{27}  ( x_{12}  -  x_{15} ) -  k_{22}   k_{37}   x_{12}   x_{13 }, \notag \\
\dot x_{13} &=& 	  k_{19}  -  x_{13}  ( k_{21}  +  k_{28} ) +  k_{30}   x_{11}  +  k_{29}   x_{16}  
-  k_{22}   k_{37}   x_{12}   x_{13},  \notag \\
\dot x_{14} &=& 	  k_{22}   k_{37}   x_{12}   x_{13}  -  x_{14}  ( k_{23}  +  k_{24}  +  k_{25} ), \notag \\
\dot x_{15} &=& 	  k_{18}  -  x_{12}  ( k_{20 } +  k_{26} ) +  k_{26 }  x_{12 } +  k_{30}   x_{11 } -  k_{22}   k_{37}   x_{12}   x_{13},  \notag \\
\dot x_{16} &=& 	  k_{28}   x_{13}  -  k_{29 }  x_{16 }, \notag \\
\dot x_{17} &=& 	  k_{31}   x_{17}  ( k_{39}  +  x_{5}  +  x_{6}  +  x_{7}  -  x_{8 } -  x_{17} ) -  k_{35}  ( x_{17}  -  k_{39}  +  x_{18} ), \notag \\
\dot x_{18} &=& 	 -  k_{34}   x_{18}  -  k_{31}   x_{17}  ( k_{39}  +  x_{5}  +  x_{6}  +  x_{7}  -  x_{8}  -  x_{17} ), \notag \\
\dot x_{19} &=& 	  k_{34}   x_{18}  -  k_{32}   x_{19},  \notag \\
\dot x_{20} &=& 	  k_{32}   x_{19}  -  k_{33}   k_{38}  x_{20},  
\end{eqnarray}
and the truncated rescaled transformed model reads
\begin{eqnarray}
     \dot y_2 &=& \epsilon^1(k_1 k_{40}-k_2 y_2) , \notag  \\ 
    \dot y_4 &=&  \epsilon^2(k_3 k_{41}-2 k_3 y_4) , \notag \\ 
	\dot y_5 &=&  \epsilon^1(k_5 y_6 + k_7 y_7 + k_6 y_4 y_5-k_6 k_{41} y_5) , \notag \\ 
\dot y_6 &=& 	 \epsilon^1(k_{35} k_{39} + k_9 y_8 + k_9 y_{17}- k_9 k_{39} - k_5 y_6 - k_9 y_7-\notag \\ 
&& - k_{35} y_{17} - k_{35} y_{18} - k_8 y_4 y_6) , \notag \\ 
\dot y_7 &=& 	 \epsilon^2(k_6 k_{41} y_5- k_7 y_7 - k_14 y_7 - k_6 y_4 y_5) , \notag \\ 
\dot y_8 &=& 	 \epsilon^3(k_{16} k_{40} y_{11}-k_{17} k_{36} y_{6}) , \notag \\ 
\dot y_9 &=& 	 \epsilon^2(k_{10} y_5^2- k_{11} y_9 - k_{15} y_9) , \notag 
 \end{eqnarray}
\begin{eqnarray}
\dot y_{10} &=& 	 \epsilon^2(k_{15} y_9 + k_{12} y_6^2-k_{13} y_{10}) , \notag \\ 
\dot y_{11} &=& 	 \epsilon^3(k_{23} y_{14}-k_{30} y_{11}) , \notag \\ 
\dot y_{12} &=& 	 \epsilon^2(k_{27} y_{15}-k_{26} y_{12}) , \notag \\ 
\dot y_{13} &=& 	 \epsilon^0(k_{19} + k_{30} y_{11}-k_{22} k_{37} y_{12} y_{13}) , \notag \\ 
\dot y_{14} &=& 	 \epsilon^2(k_{22} k_{37} y_{12} y_{13}- k_{23} y_{14} - k_{25} y_{14}) , \notag \\ 
\dot y_{15} &=& 	 \epsilon^4(k_{18} + k_{30} y_{11}- k_{20} y_{12} - k_{22} k_{37} y_{12} y_{13}) , \notag \\ 
\dot y_{16} &=& 	 \epsilon^3(k_{28} y_{13}-k_{29} y_{16}) , \notag \\ 
\dot y_{17} &=& 	 \epsilon^2(k_{35} k_{39} + k_{31} k_{39} y_{17} + k_{31} y_7 y_{17}- k_{35} y_{17} - k_{35} y_{18} - k_{31} y_{17}^2 -\notag \\ 
&&- k_{31} y_8 y_{17}) , \notag \\ 
\dot y_{18} &=& 	 \epsilon^2(k_{31} y_{17}^2 + k_{31} y_8 y_{17}- k_{34} y_{18} - k_{31} k_{39} y_{17} - k_{31} y_7 y_{17}) , \notag \\ 
\dot y_{19} &=& 	 \epsilon^2(k_{34} y_{18}-k_{32} y_{19}), \notag\\ 
\dot y_{20} &=& 	 \epsilon^1(k_{32} y_{19}-k_{33} k_{38} y_{20}). 
\label{eq:tgfbtransformedtruncated}
\end{eqnarray}
As can be seen from \eqref{eq:tgfbtransformedtruncated}, this method unravels one new timescale that was not 
apparent in the initial rescaled model \eqref{eq:tgfbtruncated}. 

The new variables of the transformed model can be expressed in the old variables of the initial model as  shown in Table~\ref{table:transformed_variables}.
Some of the variables remain the same after the transformation.
In order to find the inverse transformation, from new variable $x_i$ to old
variables $x_i^o$, we need to gather the definitions of the variables
that change, namely $x_8$ and $x_{15}$ and the definitions
of the three exact conservation laws
that were used to eliminate three old variables. More precisely,
we have to solve
\begin{eqnarray} \label{eq:varchange}
x_8 &=& x_5+x_6+x_7+x^o_8+x_{18}+x^o_{21}\notag \\
x_{15} &=& x_{12}+x^o_{15}\notag\\
k_{39}&=&x_{17}+x_{18}+x^o_{21}\notag\\
k_{40}&=&x^o_1+x_2+x_5+x_6+x_7+x^o_8+2x_9+2x_{10}+x_{18}+x^o_{21}\notag\\
k_{41}&=&x^o_3+x_4+x_7+x^o_8+x_{18}+x_{19}+x_{20},
\end{eqnarray}
leading to 
\begin{eqnarray}
x^o_1 &=& k_{40} - x_2 -x_8- 2x_9 - 2x_{10} \notag \\
x^o_{3} &=& k_{39} + k_{41} - x_4 + x_5 + x_6 - x_8 - x_{17} - x_{18} - x_{19} - x_{20} \notag\\
x^o_8 &=&  x_8 + x_{17} - x_5 - x_6 - x_7 - k_{39} \notag\\
x^o_{15}&=&x_{15} - x_{12}\notag\\
x^o_{21}&=&k_{39} - x_{17} - x_{18}.
\end{eqnarray}
Because all the old variables are positive, the new variables have to satisfy the following constraints:
\begin{eqnarray}
 k_{40} - x_2 -x_8- 2x_9 - 2x_{10} \geq  0 \notag \\
k_{39} + k_{41} - x_4 + x_5 + x_6 - x_8 - x_{17} - x_{18} - x_{19} - x_{20} \geq 0 \notag\\
 x_8 + x_{17} - x_5 - x_6 - x_7 - k_{39} \geq 0 \notag\\
x_{15} - x_{12} \geq 0 \notag\\
k_{39} - x_{17} - x_{18} \geq 0.
\end{eqnarray}
This result is well known for CRNs, as exact conservation laws are often used for reducing model reduction. The resulting CRNs have variables constrained to polytopes. In our case, the number of constraints is larger, because not only exact, but also approximate conservation laws, are used for the reduction.  

Contrary to the reduction by exact conservation laws elimination when the remaining variables are  chemical species, our transformed model
contains two variables representing pools of chemical species. According to \eqref{eq:varchange} $x_{15}$ represents the total type 1 free receptor RI,
and $x_{8}$ represents the total phosphorylated SMAD2, except those in homodimers or complexified with TIF1-$\gamma$.

\subsection{Reduced models}

The non-degeneracy condition being satisfied, the transformed
model can be now further reduced by successive elimination of 
the fast variables. The hyperbolicity condition can be tested
with methods exposed in \cite{kruff2021algorithmic}.
The reduced models at various last slow timescales
are summarized in the Table~\ref{table:reduced}.

\begin{sidewaystable}
  \begin{tabular}{|l|l|l|l|l|l|}
  \hline  \scriptsize
  \textbf{$l$}  &  \textbf{Groups of variables} & \scriptsize \textbf{ToV}  & \textbf{Conservation laws}   & \scriptsize \textbf{Interpretation} & \scriptsize \textbf{ToC}  \\ \hline
  \scriptsize 
 $1$ & $\vx_1 = x_{13}$ & $\epsilon^0$  & none & &  \\ \scriptsize
 $2$  & $\vx_2 = (x_{2},x_5,x_6,x_{20})$ & $\epsilon^1$  & none & &  \\ \scriptsize
 $3$     & $\vx_3 = (x_1,x_3,x_4,x_7,x_8,x_9,x_{10},$ & $\epsilon^2$  & $x_1+x_2$ &  \scriptsize total free SMAD2 & $\epsilon^4$ \\ \scriptsize
$3$     & $x_{14},x_{17},x_{18},x_{19},x_{21})$ & $\epsilon^2$  & $x_3+x_4$ &  \scriptsize total free SMAD4 & $\epsilon^3$ \\ \scriptsize
$3$     &  & $\epsilon^2$  & $x_5+x_6+x_7+x_8+x_{18}+x_{21}$ &  \scriptsize total pSMAD2  &  $\epsilon^3$\\ \scriptsize
$3$     &  & $\epsilon^2$  & $x_{17}+x_{18}+x_{21}$ &  \scriptsize total TIF & $\epsilon^{\infty}$ \\ \scriptsize
$4$          & $\vx_4 = (x_{11},x_{15},x_{16},x_3+x_4,$ & $\epsilon^3$  & $x_{12}+x_{15}$ &  \scriptsize total free RI  & $\epsilon^4$ \\ \scriptsize
$4$          & $x_5+x_6+x_7+x_8+x_{18}+x_{21})$ & $\epsilon^3$  & $x_3+x_4-x_5-x_6+x_{17}+x_{18}+x_{19}+x_{20}$ &  \scriptsize total pSMAD2  & $\epsilon^4$ \\ \scriptsize
$5$          & $\vx_5 = (x_1+x_2,x_3+x_4-x_5-x_6+$ & $\epsilon^4$  & $x_3+x_4+x_{17}+x_{19}+x_{20}-(x_1 +x_2 +x_5+x_6)$ &  \scriptsize total SMAD2 &$\epsilon^5$ \\ \scriptsize
$5$          & $+x_{17}+x_{18}+x_{19}+x_{20},x_{12}+x_{15})$ & $\epsilon^4$  & $x_3+x_4+x_7+x_8+x_{17}+2x_{18}+x_{19}+x_{20}$ 
&  \scriptsize total SMAD4 & $\epsilon^{\infty}$ \\ \scriptsize
$6$          & $\vx_6 = x_3+x_4+x_{17}+x_{19}+$ & $\epsilon^5$  & 
$x_1+x_2+x_5+x_6+2x_9+2x_{10}-$ & & $\epsilon^{\infty}$ \\
          & $+x_{20}-(x_1 +x_2 +x_5+x_6)$ &  & 
\hfill $-(x_3+x_4+x_{17}+x_{18}+x_{19}+x_{20})$ & & \\
      \hline
  \end{tabular}  
  \caption{Transformed model: variable sets and conservation laws at various iterations. The ToV column contains the  timescale order of the slowest variable included in the conservation laws (slower means higher order) and the ToC column contains the timescale order of the variable resulting from the conservation law. One can check that ToC$>$ToV: conservation laws are slower than the variables composing them.  Exact conservation laws have infinite timescale orders. \label{table:transformed_steps}}
\end{sidewaystable}

\begin{sidewaystable}
  \begin{tabular}{|l|l|l|l|}
  \hline  
  \textbf{Variable}  & \textbf{Definition in old variables} & \textbf{Timescale}  & \textbf{Interpretation}  \\ \hline
  \footnotesize 
 $x_{2}$  &  $x_{2}$ &  $\epsilon^1$ & SMAD2n\\
 $x_{4}$  &  $x_{4}$ &  $\epsilon^2$& SMAD4n\\
 $x_{5}$  &  $x_{5}$ &  $\epsilon^1$& pSMAD2c\\
 $x_{6}$  &  $x_{6}$ &  $\epsilon^1$& pSMAD2n\\
 $x_{7}$  &  $x_{7}$ &  $\epsilon^2$& pSMAD24c\\
  $x_{8}$  &  $x_5+x_6+x_7+x_8+x_{18}+x_{21}$ & $\epsilon^3$ & total pSMAD2 without pSMAD22 \\
   $x_{9}$  &  $x_{9}$ & $\epsilon^2$ & pSMAD22c \\
      $x_{10}$  &  $x_{10}$ & $\epsilon^2$ & pSMAD22n\\
         $x_{11}$  &  $x_{11}$ &$\epsilon^3$  & LRe \\
            $x_{12}$  &  $x_{12}$ &$\epsilon^2$  & RI\\
               $x_{13}$  &  $x_{13}$ &$\epsilon^0$ & RII\\
                  $x_{14}$  &  $x_{14}$ & $\epsilon^2$ & LR\\
      $x_{15}$  &  $x_{12}+x_{15}$ & $\epsilon^4$ & total free RI\\
      $x_{16}$  &  $x_{16}$ & $\epsilon^3$& RIIe\\ 
      $x_{17}$  &  $x_{17}$ &$\epsilon^2$ & TIF \\ 
      $x_{18}$  &  $x_{18}$ &$\epsilon^2$ & pSMAD24nTIF \\ 
      $x_{19}$  &  $x_{19}$ & $\epsilon^2$& SMAD4ubn\\ 
      $x_{20}$  &  $x_{20}$ & $\epsilon^1$& SMAD4ubc \\ 
      \hline
  \end{tabular}   
  \caption{Transformed model: final variables and their interpretation. 
  The variables of the transformed model are all positive and must also satisfy 
  $k_{40} - x_2 -x_8- 2x_9 - 2x_{10} \geq  0$,
$k_{39} + k_{41} - x_4 + x_5 + x_6 - x_8 - x_{17} - x_{18} - x_{19} - x_{20} \geq 0$,
 $x_8 + x_{17} - x_5 - x_6 - x_7 - k_{39} \geq 0$,
$x_{15} - x_{12} \geq 0$,
$k_{39} - x_{17} - x_{18} \geq 0$. \label{table:transformed_variables}
  }
\end{sidewaystable}

\begin{sidewaystable}
  \begin{tabular}{|l|l|l|}
  \hline  \scriptsize
  \textbf{T}  & \textbf{ODEs} & \textbf{Fast variables}  \\ \hline
  \scriptsize  $\epsilon^{4}$
   & \scriptsize 
  $\dot x_{15}=k_{18} - x_{12}(k_{20} + k_{26}) + $ 
   & \scriptsize 
   $x_2=\frac{k_1k_{40}}{k_2}, x_4=\frac{k_{41}}{2},
   x_5=\frac{2k_5k_{16}k_{19}k_{23}k_{40}(k_7 + k_{14})}{k_6k_{14}k_{17}k_{25}k_{30}k_{36}k_{41}},
x_6=\frac{k_{16}k_{19}k_{23}k_{40}}{k_{17}k_{25}k_{30}k_{36}},
x_7=\frac{k_5k_{16}k_{19}k_{23}k_{40}}{k_{14}k_{17}k_{25}k_{30}k_{36}},$ \\
&\scriptsize $+ k_{26}x_{12} + k_{30}x_{11} - k_{22}k_{37}x_{12}x_{13}$    &\scriptsize
$x_8=\frac{(2 k_5 (k_9 +  k_{14} ) +k_{8} k_{14}   k_{41} ) k_{16} k_{19} k_{23} k_{40} + 
  + k_{14} k_{17} k_{25} k_{30} k_{36}(
    2 (k_9 -   k_{35})  (k_{39} - x_{17})
    +2  k_{35}  x_{18})
    }{2 k_9 k_{14} k_{17} k_{25} k_{30} k_{36}},$
        \\
&&\scriptsize  
$a=2 k_{17} k_{25} k_{30} k_{31} k_{34} k_{35} k_{36},$ 
$c=-2 k_9 k_{17} k_{25} k_{30} k_{34} k_{35} k_{36} k_{39},$      \\
&&\scriptsize
$b=k_5 k_{16} k_{19} k_{23} k_{31} (2 +k_8k_{41})(k_{34}+k_{35}) k_{40}  + 
     k_{39}(c-a )$, $x_{17} = (-b + \sqrt{b^2 - 4 a c})/(2 a),$ \\
&&\scriptsize
$x_{18}=\frac{-k_{34}( (2 k_5+k_8k_{41})   k_{16} k_{19} k_{23} k_{31} k_{40}+ 2 (k_9-k_{31} k_{39}) k_{17} k_{25} k_{30} k_{35} k_{36}  )  x_{17} + 
    + a x_{17}^2 +c  }{k_{34}(2 k_{17} k_{25} k_{30} k_{35} k_{36} (k_9 + k_{31} x_{17}))},$
    \\
&&\scriptsize
$x_{19}=\frac{k_{34} x_{18}}{k_{32}},$  $x_{20}=\frac{k_{34} x_{18}}{k_{33} k_{38}},$  \\  
&&\scriptsize
$x_9=\frac{4 k_5^2 k_{10} k_{16}^2 k_{19}^2 k_{23}^2 k_{40}^2 (k_7 + k_{14})^2}{
    k_6^2 k_{14}^2 k_{17}^2 k_{25}^2 k_{30}^2 k_{36}^2 k_{41}^2 (k_{11} + k_{15})},$ 
  
$x_{13}=\frac{k_{19} (k_{23} + k_{25})k_{26} }{k_{22} k_{25} k_{27} k_{37} x_{15}},
x_{14}=\frac{k_{19}}{k_{25}},
x_{16}=\frac{k_{19} k_{26} k_{28} (k_{23} + k_{25})}{k_{22} k_{25} k_{27} k_{29} k_{37} x_{15}}$
    \\
&&\scriptsize
$x_{10}=\frac{k_{16}^2 k_{19}^2 k_{23}^2 k_{40}^2 
(4 k_5^2 k_{15} k_{10} (k_7+k_{14})^2 + k_6^2 k_{12} k_{14}^2 (k_{15}+k_{11}) k_{41}^2  
    )}
    {k_6^2 k_{13} k_{14}^2 k_{17}^2 k_{25}^2 k_{30}^2 k_{36}^2 k_{41}^2 (k_{11} + k_{15})},$
    $x_{11}=\frac{k_{19} k_{23}}{k_{25} k_{30}}, x_{12}=\frac{k_{27} x_{15}}{k_{26}}.$
    \\ \hline
 \scriptsize $\epsilon^3$  & \scriptsize
 $\dot x_8 = 
- 2 k_{10} x_5^2 - 2 k_{12} x_6^2 - k_{16} x_{11} (x_2 - k_{40} + x_8 +$
 & \scriptsize $x_2=\frac{k_1k_{40}}{k_2}, x_4=\frac{k_{41}}{2},$ 
 $c=-k_{34} k_{35} k_{39} (2 k_5 k_9 + 2 k_5 k_{14} + k_8 k_{14} k_{41}),$
 \\
  & \scriptsize
 $ + 2 x_9 + 2 x_{10}) +  2 k_{13} x_{10}  + 2 k_{11} x_9  - k_{17} k_{36} x_6,$
 & \scriptsize
 $a=k_{31} (2 k_5 k_{14} k_{34} + 2 k_5 k_{14} k_{35} + 2 k_5 k_{34} k_{35} + k_8 k_{14} k_{34} k_{41} + k_8 k_{14} k_{35} k_{41}),$
 \\
   & \scriptsize
 $ \dot x_{11} =   k_{23}x_{14} - k_{30}x_{11},$
 & \scriptsize
   $b = (a-2 k_5 k_{31} k_{34} k_{35}) x_8+(c-a) k_{39}-c (k_{39}^2+1)/k_{39},$
   $x_{17} = (-b + \sqrt{b^2-4ac})/(2a),$
 \\
    & \scriptsize
 $ \dot x_{15} =   k_{18} - x_{12}(k_{20} + k_{26}) + k_{26}x_{12} + k_{30}x_{11} -$
 & \scriptsize
 $x_5=\frac{      4 k_5 (k_9 + k_{31} x_{17}) (k_7 + k_{14}) (x_8 - k_{39} + x_{17}) }{k_6 k_{41} (2 k_5 k_9 + 2 k_5 k_{14} + k_8 k_{14} k_{41} + 2 k_5 k_{31} x_{17}) },$
 $x_6=\frac{2 k_{14} (k_9 + k_{31} x_{17}) (x_8 - k_{39} + x_{17})}
{2 k_5 k_9 + 2 k_5 k_{14} + k_8 k_{14} k_{41} + 2 k_5 k_{31} x_{17}},$
 \\
     & \scriptsize
     $- k_{22}k_{37}x_{12}x_{13},$
 & \scriptsize
 $x_7=\frac{2 k_5 (k_9 + k_{31} x_{17}) (x_8 - k_{39} + x_{17})}{2 k_5 k_9 + 2 k_5 k_{14} + k_8 k_{14} k_{41} + 2 k_5 k_{31} x_{17}},$
 $x_9=\frac{16 k_5^2 k_{10} (k_9 + k_{31} x_{17})^2 (k_7 + k_{14})^2 (x_8 - k_{39} + x_{17})^2}{k_6^2 k_{41}^2 (k_{11} + k_{15}) (2 k_5 k_9 + 2 k_5 k_{14} + k_8 k_{14} k_{41} + 2 k_5 k_{31} x_{17})^2},$
 \\
&\scriptsize $ \dot x_{16} =   k_{28}x_{13} - k_{29}x_{16}.$ &\scriptsize
$x_{18} = \frac{k_{35}k_{39} + (k_{31}k_{39}-k_{35})x_{17} + k_{31}(x_7-x_8)x_{17} - k_{31}x_{17}^2 }{k_{35}},$
$x_{19}=\frac{k_{34} x_{18}}{k_{32}},$  $x_{20}=\frac{k_{34} x_{18}}{k_{33} k_{38}}.$ \\
 \hline
 \scriptsize $\epsilon^2$  & \scriptsize
 $\dot x_4= - k_3 (x_4 - k_{41} - k_{39} - $
 & \scriptsize
 $x_2=\frac{k_1k_{40}}{k_2},$
 \\
 \scriptsize
 & \scriptsize $ - 
     x_5 - x_6 + x_8 + x_{17} + x_{18} + x_{19} + x_{20}) -$
 & \scriptsize 
 $x_5=\frac{k_5 (k_{35}  -  k_9) k_{39} + k_5 (k_7  -  k_9) x_7 + k_5 k_9 (x_8 + x_{17}) - k_5 k_{35} (x_{17} + x_{18}) + k_7 k_8 x_4 x_7}{k_6 (k_5 + k_8 x_4) (k_{41} - x_4)},$
 \\
  \scriptsize
 & \scriptsize $ - 
     k_3 x_4 - k_9 (k_{39} + x_5 + x_6 + x_7 - x_8 - x_{17}) - $
 & \scriptsize 
 $x_6=-\frac{(k_9 - k_{35})k_{39} + k_9(x_7 - x_8)  + (k_{35}-k_9)x_{17} + k_{35}x_{18}}{k5 + k8 y4},$
 \\
   \scriptsize
 & \scriptsize $ - k_8 x_4 x_6,\, \dot x_7= - x_7 (k_7 + k_{14}) - k_6 x_5 (x_4 - k_{41} - $
 & \scriptsize 
 $x_{13}=\frac{k_{19} + k_{30}x_{11}}{k_{22}k_{37}x_{12}},$
 \\
   \scriptsize
 & \scriptsize $ -k_{39}  - x_5 - x_6 + x_8 + x_{17} + x_{18} + x_{19} + x_{20}),
       $
 & \scriptsize 
 $x_{20}=\frac{k_{32}x_{19}}{k_{33}k_{38}}.$
 \\
    \scriptsize
 & \scriptsize $   \dot x_8 =  2 k_{11} x_9  + 2 k_{13} x_{10} - 2 k_{10} x_5^2 - 2 k_{12} x_6^2 -$
 & \scriptsize 
 \\
     \scriptsize
 & \scriptsize $-k_{16} x_{11} (x_2 - k_{40} + x_8 + 2 x_9 + 2 x_{10})- k_{17} k_{36} x_6, $
 & \scriptsize 
 \\
     \scriptsize
 & \scriptsize $ \dot x_9 = k_{10} x_5^2 - x_9 (k_{11} + k_{15}), $
 & \scriptsize 
 \\
      \scriptsize
 & \scriptsize $\dot x_{10} = k_{15} x_9 - k_{13} x_{10} + k_{12} x_6^2,\,\dot x_{11} = k_{23} x_{14} - $
 & \scriptsize 
 \\
       \scriptsize
 & \scriptsize $- k_{30} x_{11}, \,\dot x_{12}= k_{18} - x_{12} (k_{20} + k_{26}) + k_{30} x_{11} - $
 & \scriptsize 
 \\
       \scriptsize
 & \scriptsize $- k_{27} (x_{12} - x_{15}) - k_{22} k_{37} x_{12} x_{13}, $
 & \scriptsize 
 \\
       \scriptsize
 & \scriptsize $ \dot x_{14} = k_{22} k_{37} x_{12} x_{13} - x_{14} (k_{23} + k_{24} + k_{25}), $
 & \scriptsize 
 \\
        \scriptsize
 & \scriptsize $ \dot x_{15} = k_{18} - x_{12} (k_{20} + k_{26}) + k_{26} x_{12} + k_{30} x_{11} -$
 & \scriptsize 
 \\
        \scriptsize
 & \scriptsize $- k_{22} k_{37} x_{12} x_{13},\,\dot x_{16} = k_{28} x_{13} - k_{29} x_{16},$
 & \scriptsize 
 \\
         \scriptsize
 & \scriptsize $\dot x_{17}= k_{31} x_{17} (k_{39} + x_5 + x_6 + x_7 - x_8 - x_{17}) - $
 & \scriptsize 
 \\
          \scriptsize
 & \scriptsize $-k_{35} (x_{17} - k_{39} + x_{18}),\, \dot x_{18} = - k_{34} x_{18} -$
 & \scriptsize 
 \\
           \scriptsize
 & \scriptsize $- k_{31} x_{17} (k_{39} + x_5 + x_6 + x_7 - x_8 - x_{17}),$
 & \scriptsize 
 \\
 &\scriptsize  $\dot x_{19} = k_{34} x_{18} - k_{32} x_{19}.$&\\
 \hline
  \end{tabular}   
  \caption{Description of various reduced models. The T column contains the 
  timescale order of the slow variable satisfying ODEs; in the case of several slow variables, it represents the fastest one.
  The fast variable column expresses the concentrations of fast variables as functions of the slow ones. The variables $x_i$ are the transformed variables defined in the Table~\ref{table:transformed_variables}. \label{table:reduced}}
\end{sidewaystable}
In order to test numerically the accuracy of the reduction 
we have eliminated the fast variables up to timescale order $\epsilon^{q_k}$ by symbolically 
solving the algebraic truncated system 
$\vFbar^{(1)}_k(\vk,\vx)=0$, eliminating the variables $\vX_k$, and numerically solving the
system of ODEs for the remaining slow variables (nested reduction  \eqref{truncatedmultiscale}). The result is represented in the
Figure~\ref{fig:figure_reduction_TGFb}, for various choices of $k$. 
As it can be noticed, especially at shorter timescales there are
 few species that are predicted with errors by the reduced model.
 There are two reasons to this phenomenon. The first reason is that
 the values of fast species are based on the truncated system of equations. 
 Although all the terms neglected by truncation have orders larger than the
 dominant terms and therefore the reduction
 is justified in the limit $\epsilon \to 0$, for finite $\epsilon$ 
 the quality of the approximation can be low if the
 number of the neglected terms is large. 
 This source of error can be reduced by considering higher order terms 
 in the approximation, for instance higher order Puiseux series 
 to represent the fast variables. 
 Another reason for bad approximation
 is the choice of the tropical equilibration used for the reduction. 
 A tropical equilibration solution is valid in a domain in the space of
 concentrations but not for all species concentrations. Furthermore, 
several tropical equilibration solutions (a polytope in log scale) lead to the same 
reduced model, but again the corresponding polytope does not cover all the 
concentration. It is thus possible that the tropical equilibration solution and 
the reduced model has to change along a trajectory of the full model when this crosses 
polytopes corresponding to different reductions. This is 
the case for the transformed TGF-$\beta$ model, see Figure~\ref{fig:figure_testing_equilibration_TGFb}.
This source of error can be reduced by considering tropical equilibration solutions 
at the boundary between polytopes, leading to reductions valid for two or several
polytopes of solutions.

\begin{figure}[ht!]
    \centering
    \includegraphics[width=0.5\linewidth]{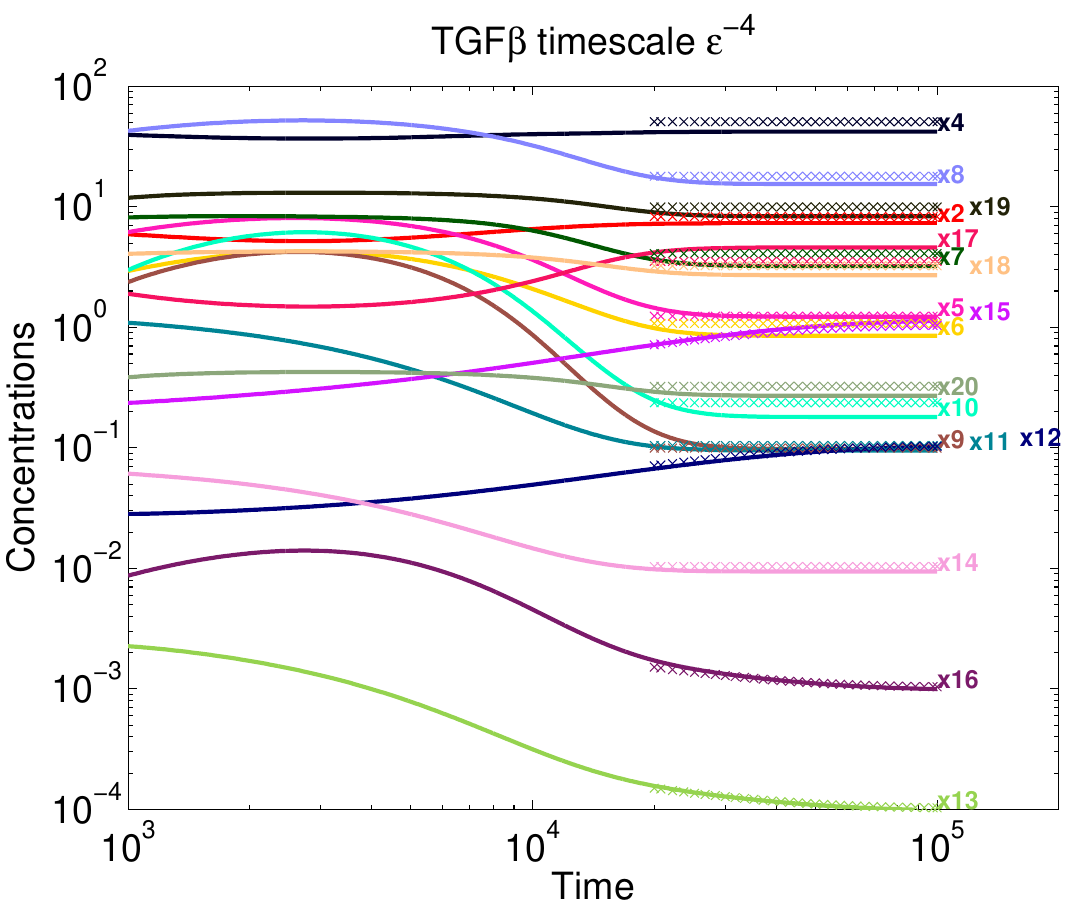}

    \includegraphics[width=0.5\linewidth]{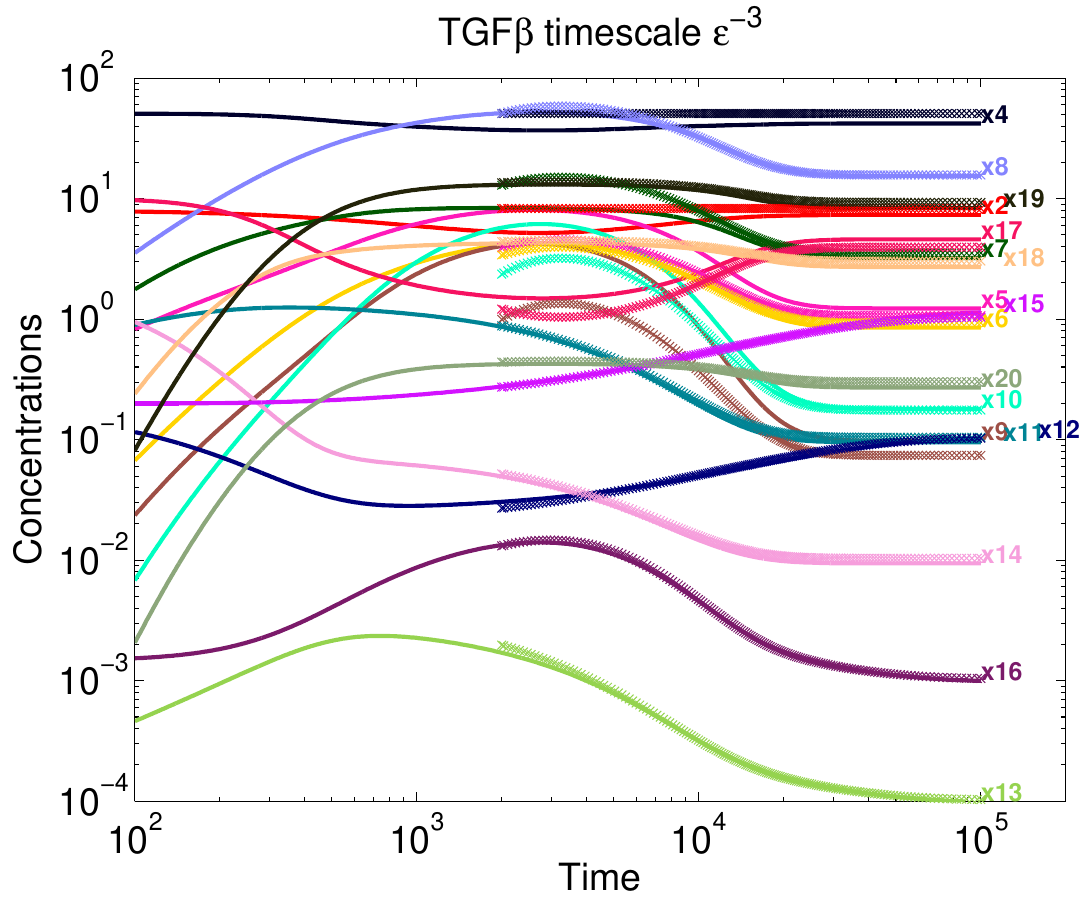}

    \includegraphics[width=0.5\linewidth]{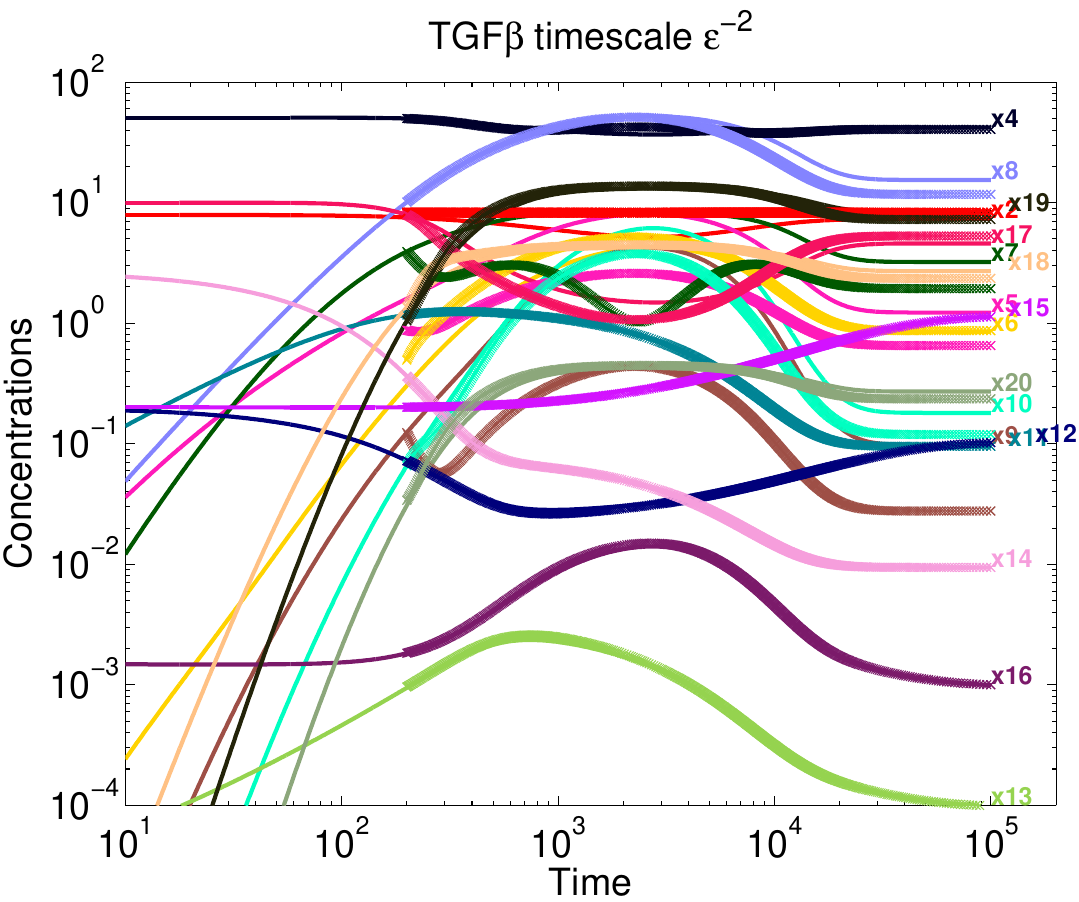}

    \caption{Comparison of numerical solutions obtained with the transformed TGF-$\beta$ model (continuous lines) and 
    with slowest timescale reduced models (crosses).
    For each reduced model, a small number of variables (slow) follow ODEs. The initial values
    of these were chosen the same as the values computed with the full transformed model
    at a large enough time. The remaining fast 
    variables were computed as functions of the slow variables. The large errors for a few species at times shorther than $10^4$ could be explained by lack of validity of the 
    tropical equilibration used for the reduction at these shorter timescales, see also
    Figure~\ref{fig:figure_testing_equilibration_TGFb}.
    }
    \label{fig:figure_reduction_TGFb}
\end{figure}

\begin{figure}[ht!]
    \centering
    \includegraphics[width=\linewidth]{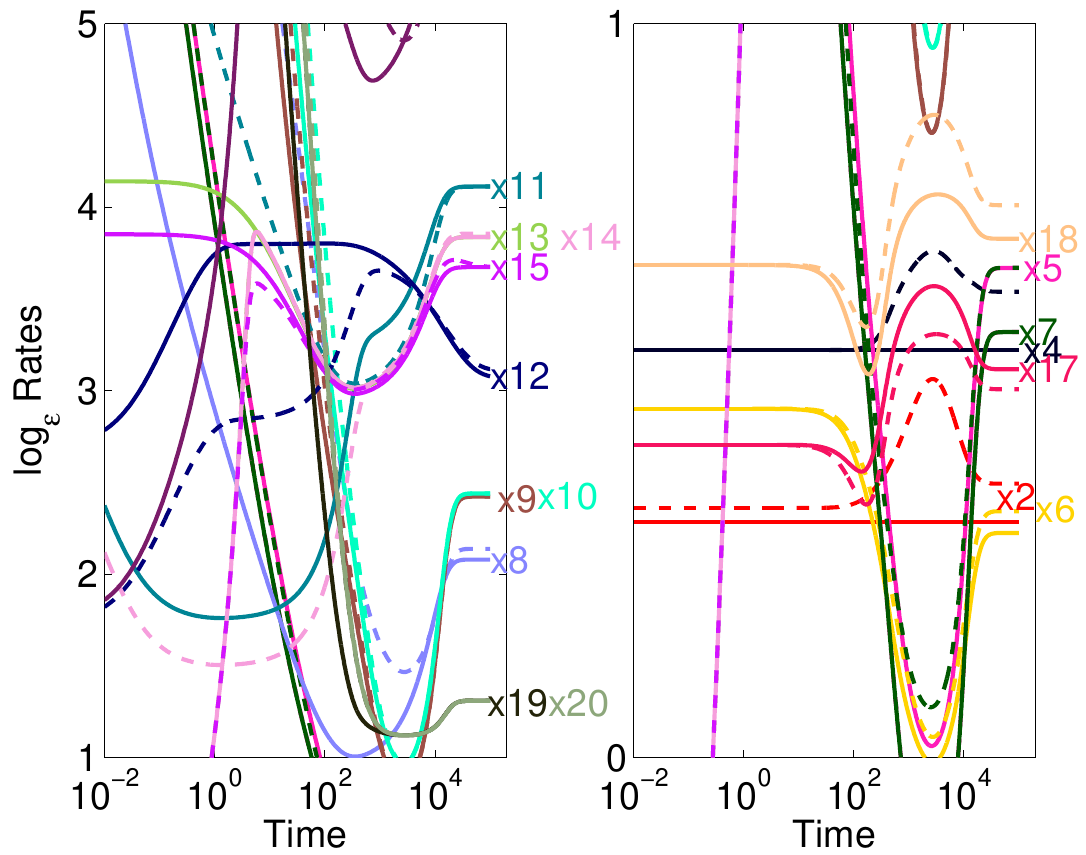}
    
    \caption{Testing tropical equilibration for various species
    of the transformed TGF-$\beta$ model. For each species we have plotted the
    $\log_\epsilon$ of positive (continuous line) and negative (dotted lines) 
    rates producing and consuming these species, respectively. For tropically equilibrated
    species the two rates must have the same order (the $\log_\epsilon$ values should 
    round up to the same integer for continuous and dotted curves of the same color). For times larger than $10^4$ this condition is valid
    for all species. For shorter times, a few species  are not equilibrated. 
    Furthermore,  some rates change abruptly at these timescales,
    suggesting that different tropical equilibration solutions should be considered at 
    shorter timescales.
    }
    \label{fig:figure_testing_equilibration_TGFb}
\end{figure}




\section*{Acknowledgement}
We thank 
Sebastian Walcher, Peter Szmolyan and Werner Seiler  for very helpful discussions.
The project SYMBIONT owes a lot to Andreas Weber who sadly left us in 
2020, but who is still present in our memories.


\end{document}